\newcommand{\mr}{\ensuremath{\mathbb R}}
\newcommand{\shortmod}{\ensuremath{\negthickspace \negthickspace \negthickspace \pmod}}
\newcommand{\half}{\ensuremath{ \frac{1}{2}}}
\newcommand{\sumstar}{\sideset{}{^*}\sum}
\newcommand{\thalf}{\tfrac12}
\theoremstyle{plain}		
	\newtheorem{mytheo}{Theorem}[section]
	\newtheorem{myprop}[mytheo]{Proposition}
	\newtheorem{mycoro}[mytheo]{Corollary}
     \newtheorem{mylemma}[mytheo]{Lemma}
	\newtheorem{myconj}[mytheo]{Conjecture}
\theoremstyle{remark}
\begin{document}
\today
\title{The twisted fourth moment of the Riemann zeta function}
\thanks{Both authors were supported by the American Institute of Mathematics and the second author was supported by an NSF postdoctoral fellowship.}
\author{C.P. Hughes}
\address{Department of Mathematics \\
    University of York \\
    York \\
    YO10 5DD \\
    U.K.}
 \email{ch540@york.ac.uk}

\author{Matthew P. Young}
\address{American Institute of Mathematics \\
    360 Portage Ave.\\
    Palo Alto, CA 94306-2244 \\
    U.S.A.}
\curraddr{Department of Mathematics \\
	  Texas A\&M University \\
	  College Station \\
	  TX 77843-3368 \\
		U.S.A.}
\email{myoung@math.tamu.edu}

\begin{abstract}
We compute the asymptotics of the fourth moment of the Riemann
zeta function times an arbitrary Dirichlet polynomial of length
$T^{\frac{1}{11} - \varepsilon}$.
\end{abstract}
\maketitle
\section{Introduction}

The study of the moments of the Riemann zeta function has a long
and distinguished history, starting with the work of Hardy and
Littlewood in 1918 and continuing to the present day. One motivation
for understanding moments is that they yield
information about the maximum size of the zeta function (the
Lindel\"of Hypothesis); another application is to zero density estimates
which in turn have consequences for primes in short intervals.
However they have become an interesting
topic in their own right. Very few rigorous results are known,
just the second and fourth power moments. Indeed, it is only
recently that a believable conjecture for higher powers has been made.

The twisted moments (that is, moments of the Riemann zeta function
times an arbitrary Dirichlet polynomial) are important too, for
example Levinson's method of detecting zeros of the zeta function
lying on the critical line requires knowing the asymptotics of the
mollified second moment.  In a series of papers, Duke, Friedlander,
and Iwaniec used estimates for amplified moments of a family of
$L$-functions in order to deduce a subconvexity bound for an individual
member of the family.  Of course, there are far easier methods to
give a subconvexity bound for zeta, but there are close analogies between
different families and it is desirable to understand the structure of
these amplified moments in general.

In this paper, we prove an asymptotic formula for the twisted fourth moment of
the Riemann zeta function, where we may take a Dirichlet
polynomial of length up to $T^{\frac{1}{11} - \varepsilon}$.

\subsection{Previous results}

The first result concerning an asymptotic expansion for the second
moment of the Riemann zeta function is due to Hardy and Littlewood
in 1918 \cite{HardyLittle18} where they showed that
\begin{equation}
\int_0^T \left|\zeta(\tfrac12+i t)\right|^{2} \; dt \sim T \log T.
\end{equation}

In 1926 Ingham \cite{Ingham26} improved this by calculating the
lower order terms via finding the asymptotics of the shifted
second moment. He showed that if $|\Re(\alpha)| \leq 1/2-\varepsilon$
and $|\Re(\beta)| \leq 1/2-\varepsilon$ for any fixed $\varepsilon>0$ then
\begin{multline}
\int_0^T \zeta(\tfrac12+\alpha+it) \zeta(\tfrac12+\beta-it) \; dt
= \int_0^T \left(\zeta(1+\alpha+\beta) +
\left(\frac{t}{2\pi}\right)^{-\alpha-\beta}
\zeta(1-\alpha-\beta)\right)\; dt \\
+ O\left(T^{1/2-\Re(\alpha+\beta)/2}\log T\right),
\end{multline}
with the error term uniform in $\alpha$ and $\beta$. From this he
deduced that
\begin{equation}
\int_0^T \left|\zeta(\tfrac12+i t)\right|^{2} \; dt = T \left(\log
\frac{T}{2\pi} + 2\gamma - 1 \right) + O(T^{1/2}\log T).
\end{equation}

The final result concerning the second moment we wish to highlight
is due to Balasubramanian, Conrey and Heath-Brown \cite{BCHB85}.
They show that for $\theta=1/2-\varepsilon$, if
\begin{equation}
M(s) = \sum_{h\leq T^{\theta}} \frac{a(h)}{h^s},
\end{equation}
with $a(h) \ll h^\epsilon$, then
\begin{equation}
\int_0^T \left|\zeta(\tfrac12+it)\right|^2
\left|M(\tfrac12+it)\right|^2 \;dt \sim T \sum_{h,k \leq T^\theta}
\frac{a(h) \overline{a(k)} (h,k)}{h k} \left(\log\left(\frac{T
(h,k)^2}{2\pi h k } \right) + 2\gamma - 1\right),
\end{equation}
where $(h,k)$ denotes the greatest common divisor of $h$ and $k$.
Conrey \cite{Con89} increased the length of the polynomial to
$T^{4/7-\varepsilon}$ in the case when the coefficients $A(s)$ had
a specific form similar to the M\"obius function. This allowed him
to use Levinson's method \cite{Lev} to show that more than $40\%$
of the zeros of the zeta function lie on the critical line,
improving on Levinson's original estimate of at least $33\%$ of
zeros satisfy the Riemann Hypothesis.

The fourth moment of the Riemann zeta function is much more
complicated. The first asymptotic result is due to Ingham
\cite{Ingham26} who showed that
\begin{equation}
\int_0^T \left|\zeta(\tfrac12+it)\right|^4 \; dt =
\frac{1}{2\pi^2} T (\log T)^4 + O\left(T (\log T)^3\right).
\end{equation}
Though published in 1926, the result was first announced in 1923.
During the interceding time Titchmarsh \cite{Tit28_a} discovered a
similar result using completely different methods from which
Ingham's result follows via a Tauberian theorem. Titchmarsh's
paper was published in the same journal as Ingham's paper. He
showed that
\begin{equation}
\int_0^\infty \left|\zeta(\tfrac12+it)\right|^4 e^{-t/T} \;dt \sim
\frac{1}{2\pi^2} T \left(\log T \right)^4.
\end{equation}

Atkinson \cite{Atk41} found the lower order terms (as a degree
four polynomial in $\log T$) for fourth moment in Titchmarsh's
smoothed form, but this could not be shown to imply anything for
the lower order terms in the unsmoothed case. It wasn't until 1979
that Heath-Brown \cite{HB79} managed to calculate the lower order
terms as a degree four polynomial in $\log T$ for the unsmoothed
fourth moment of zeta. His result, when written out in full, is very lengthy. Conrey
\cite{Con96} simplified the answer slightly, by showing that the
polynomial is the residue of a certain function at $s=1$.

Upper bounds on the twisted fourth moment of zeta 
\begin{equation}
\int_0^T \left|\zeta(\tfrac12+it)\right|^4
\left|M(\tfrac12+it)\right|^2 \;dt
\end{equation}
were considered by Deshouillers and Iwaniec \cite{DI} in the case when the Dirichlet polynomial had length $T^{1/5-\varepsilon}$. This result was later improved by Watt \cite{Watt} to polynomials of length up to  $T^{1/4-\varepsilon}$. Both of these results used the spectral theory of automorphic forms.  In his PhD thesis, Jose Gaggero Jara \cite{Jose} calculated an asymptotic result for the twisted fourth moment of zeta
when the length of the Dirichlet polynomial is $T^{4/589 - \varepsilon}$. His result however does not yield all the lower order terms (for example, it does not recover Heath-Brown's result mentioned above).

The shifted fourth moment with no twisting has been studied by Motohashi \cite{Moto} who proved an exact result when a smoothed average was taken. Motivated by the structure of the answer he found, and previous work of Keating and Snaith \cite{KS}, Conrey, Farmer, Keating, Rubinstein and Snaith \cite{CFKRS05} found a heuristic argument (a recipe) to calculate the shifted $2\ell$\textsuperscript{th} moment for any positive integer $\ell$. From such all lower order terms can be calculated, at least conjecturally. At the end of this paper, we will show how the recipe can be modified to cope with shifted twisted $2\ell$\textsuperscript{th} moments.

Independently of us, Motohashi has recently extended his method to handle the twisted case \cite{MotoHecke}.  His method is quite different than ours here because he uses spectral theory to handle the binary divisor problem, whereas we lifted the result of \cite{DFI}, which uses the Weil bound for Kloosterman sums.  Furthermore, our primary goal here is to understand the main terms, while Motohashi devotes his attention to the development of the sum of Kloosterman sums (which do not contribute to the main terms, at least with variables in the ranges of summation restricted by an appropriate approximate functional equation).

\subsection{Results}
Let $M(s)$ be an arbitrary Dirichlet polynomimal of length $T^{\theta}$ given by
\begin{equation}
M(s) = \sum_{h \leq T^{\theta}} \frac{a(h)}{h^{s}}.
\end{equation}
The primary purpose of this paper is to compute the asymptotic behavior of 
\begin{equation}
\label{eq:mollfiedsharpcutoff}
\int_{0}^{T} 
|M(\thalf + it)|^2
|\zeta(\thalf + it)|^4 dt,
\end{equation}
as well as similar integrals with derivatives of $\zeta$ taking the place of $\zeta$.  Since the integrand is nonnegative we can bound the above expression from above and below by a smoothed integral.  By expanding out the Dirichlet polynomial $M(s)$, the problem reduces to the study of the ``twisted" fourth moment of $\zeta$
\begin{equation}
I(h,k) = \int_{-\infty}^{\infty} 
\left(\frac{h}{k}\right)^{-it}
\zeta({\textstyle \half + \alpha + it}) 
\zeta({\textstyle \half + \beta + it}) 
\zeta({\textstyle \half + \gamma - it}) 
\zeta({\textstyle \half + \delta - it}) w(t)dt,
\end{equation}
where $\alpha, \beta, \gamma, \delta$ are sufficiently small complex numbers and $w$ is a nice smooth function.  Since $M(s)$ is arbitrary, nearly all the coefficients $a(h)$ could be zero so studying an individual term is an inherent part of the problem.  One can obtain derivatives of $\zeta$ by differentiating the formulas with respect to the shift parameters. These shifts also allow for a structural viewpoint of the main terms.

The main term is written in terms of shifted products of the Riemann zeta function as well as some finite Euler products.  Let
\begin{equation}
\label{eq:Adef}
A_{\alpha,\beta,\gamma,\delta}(s) = \frac{\zeta(1+s + \alpha + \gamma)\zeta(1+s + \alpha + \delta)\zeta(1+s + \beta + \gamma)\zeta(1+s + \beta + \delta)}{\zeta(2+ 2s + \alpha + \beta + \gamma + \delta)}
\end{equation}
and set
\begin{equation}
\sigma_{\alpha,\beta}(n) = \sum_{n_1 n_2 = n} n_1^{-\alpha} n_2^{-\beta} = n^{-\alpha - \beta} \sum_{n_1 n_2 = n} n_1^{\alpha} n_2^{\beta}.
\end{equation}
Note $\sigma_{\alpha,\beta}(n) = n^{-\alpha} \sigma_{\alpha - \beta}(n)$, where $\sigma_{\lambda}(n) = \sum_{d | n} d^{\lambda}$.
Suppose $(h,k) = 1$, $p^{h_p} || h$ and $p^{k_p} || k$, and define
\begin{multline}
\label{eq:B}
B_{\alpha,\beta,\gamma,\delta,h,k}(s) = \prod_{p | h} \left(\frac{\sum_{j=0}^{\infty} \sigma_{\alpha,\beta}(p^j) \sigma_{\gamma,\delta}(p^{j + h_p}) p^{-j(s+1)}}{\sum_{j=0}^{\infty} \sigma_{\alpha,\beta}(p^j) \sigma_{\gamma,\delta}(p^{j}) p^{-j(s+1)}} 
\right)
\\
\times \prod_{p | k} 
\left(
\frac{\sum_{j=0}^{\infty} \sigma_{\alpha,\beta}(p^{j + k_p}) \sigma_{\gamma,\delta}(p^{j}) p^{-j(s+1)}}{\sum_{j=0}^{\infty} \sigma_{\alpha,\beta}(p^j) \sigma_{\gamma,\delta}(p^{j}) p^{-j(s+1)} }
\right).
\end{multline}
Let
\begin{equation}
Z_{\alpha,\beta,\gamma,\delta,h,k}(s) =  A_{\alpha,\beta,\gamma,\delta}(s) B_{\alpha,\beta,\gamma,\delta,h,k}(s).
\end{equation}
\begin{mytheo}
\label{thm:mainresult}
Let 
\begin{equation}
I(h,k) = \int_{-\infty}^{\infty} 
\left(\frac{h}{k}\right)^{-it}
\zeta({\textstyle \half + \alpha + it}) 
\zeta({\textstyle \half + \beta + it}) 
\zeta({\textstyle \half + \gamma - it}) 
\zeta({\textstyle \half + \delta - it}) w(t)dt,
\end{equation}
where $w(t)$ is a smooth, nonnegative function with support contained in $[\frac{T}{2}, 4T]$, satisfying $w^{(j)}(t) \ll_j T_0^{-j}$ for all $j=0,1,2,\dots$, where $T^{\half + \varepsilon} \ll T_0 \ll T$.  Suppose $(h,k) =1$, $hk \leq T^{\frac{2}{11} - \varepsilon}$, 
and that $\alpha, \beta, \gamma, \delta$ are complex numbers $\ll (\log{T})^{-1}$.  Then
\begin{multline}
\label{eq:mainresult}
I(h,k) = \frac{1}{\sqrt{hk}} \int_{-\infty}^{\infty} w(t) \left( Z_{\alpha,\beta,\gamma,\delta,h,k}(0)  
+ \left(\frac{t}{2 \pi}\right)^{-\alpha -\beta - \gamma - \delta}Z_{-\gamma, - \delta, -\alpha, -\beta,h,k}(0) \right. \\
+ \left(\frac{t}{2 \pi}\right)^{-\alpha - \gamma} Z_{-\gamma,\beta,-\alpha,\delta,h,k}(0) 
+ \left(\frac{t}{2 \pi}\right)^{-\alpha - \delta} Z_{-\delta,\beta,\gamma,-\alpha,h,k}(0) \\
\left.
+ \left(\frac{t}{2 \pi}\right)^{-\beta - \gamma} Z_{\alpha,-\gamma, -\beta,\delta,h,k}(0) 
+ \left(\frac{t}{2 \pi}\right)^{-\beta - \delta} Z_{\alpha,-\delta,\gamma,-\beta,h,k}(0)  
\right) dt \\
+ O(T^{\frac34 + \varepsilon} (hk)^{\frac78} (T/T_0)^{\frac94}).
\end{multline}
\end{mytheo}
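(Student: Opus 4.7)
The plan is to start from approximate functional equations for the two pairs $\zeta(\half+\alpha+it)\zeta(\half+\beta+it)$ and $\zeta(\half+\gamma-it)\zeta(\half+\delta-it)$. Each product expands as a ``direct'' Dirichlet series in $\sigma_{\alpha,\beta}$ (respectively $\sigma_{\gamma,\delta}$) plus a ``dual'' Dirichlet series in the sign-flipped divisor function, multiplied by the corresponding archimedean ratio of gamma factors, which on the critical line is essentially $(t/2\pi)^{-\alpha-\beta}$ (respectively $(t/2\pi)^{-\gamma-\delta}$). Multiplying the two expansions gives four pieces.

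After exchanging the order of summation and integration, each piece becomes a sum of the shape
\begin{equation*}
\sum_{m,n\geq 1} \frac{\sigma_{*,*}(m)\,\sigma_{*,*}(n)}{\sqrt{mn}}\, J\!\left(\frac{hm}{kn}\right),
\end{equation*}
where $J$ is an oscillatory transform of $w(t)$ against $(hm/kn)^{-it}$ times an archimedean weight. Repeated integration by parts in $t$ shows that $J$ is negligibly small unless $|hm-kn| \ll hm\cdot T^{\varepsilon}/T_0$, so the problem localizes to a shifted convolution of divisor functions.

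The main obstacle is then the resulting binary additive divisor problem: evaluating, uniformly in the shift $r=hm-kn$ and in $h,k$, sums of the form
\begin{equation*}
\sum_{hm-kn=r}\sigma_{\alpha,\beta}(m)\,\sigma_{\gamma,\delta}(n)\,f(hm,kn).
\end{equation*}
My strategy is to use (a shift-parameter version of) the result of Duke--Friedlander--Iwaniec, which handles this via the $\delta$-method and bounds the ensuing incomplete Kloosterman sums by Weil's estimate. The output is a main term of explicit Euler-product shape plus an error term that saves a power of $hk$. Balancing this error against the AFE cutoff length and the size of the main term $\sim T(\log T)^4/\sqrt{hk}$ then produces both the restriction $hk\leq T^{2/11-\varepsilon}$ and the stated error $T^{3/4+\varepsilon}(hk)^{7/8}(T/T_0)^{9/4}$.

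Summing over $r$ finally separates into a diagonal and an off-diagonal contribution. The $r=0$ diagonal, arising from the direct$\times$direct and dual$\times$dual combinations, gives the two ``straight'' main terms $Z_{\alpha,\beta,\gamma,\delta,h,k}(0)$ and $(t/2\pi)^{-\alpha-\beta-\gamma-\delta}Z_{-\gamma,-\delta,-\alpha,-\beta,h,k}(0)$. The off-diagonal $r\neq 0$ contribution, coming from the direct$\times$dual and dual$\times$direct pieces, is evaluated by contour shifts that pick up residues at $s=1$ of the various $\zeta(1+s+\cdot)$ factors in the Dirichlet series of $\sigma_{*,*}\star\sigma_{*,*}$; this produces the remaining four ``crossed'' main terms, one for each of the swaps $\alpha\leftrightarrow-\gamma$, $\alpha\leftrightarrow-\delta$, $\beta\leftrightarrow-\gamma$, $\beta\leftrightarrow-\delta$, with archimedean factors $(t/2\pi)^{-\alpha-\gamma}$, etc., inherited from the $(t/2\pi)^{-\gamma-\delta}$ weight of the dual side. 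At primes $p\nmid hk$ the local factors assemble into $A_{\alpha,\beta,\gamma,\delta}(0)$, while at primes $p\mid h$ or $p\mid k$ they reproduce exactly the ratios defining $B_{\alpha,\beta,\gamma,\delta,h,k}(0)$, giving the six displayed $Z$-terms of \eqref{eq:mainresult}.
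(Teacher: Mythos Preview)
Your high-level plan (AFE, localize to a shifted divisor sum, apply DFI) matches the paper's, but the assembly of main terms goes wrong in two places.

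First, the paper applies a \emph{single} AFE to the product of all four zeta factors, producing only two pieces $I^{(1)}+I^{(2)}$; your separate AFEs for each pair give four pieces, and the mixed direct$\times$dual pieces do \emph{not} have oscillation $(hm/kn)^{-it}$. They carry an extra $\chi$-phase $\sim (t/2\pi)^{\pm 2it}$ and oscillate like $(hmn/k)^{-it}(t/2\pi)^{2it}$, so there is no shift $r=hm-kn$, integration by parts does not localize them, and they have genuine stationary-phase contributions you never address.

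Second, and more seriously, the four crossed $Z$-terms do not come from those mixed pieces. In the paper they arise from the off-diagonal $r\neq 0$ of the \emph{same} pieces $I^{(1)},I^{(2)}$ whose diagonals give the two straight terms. The DFI main term for the off-diagonal of $I^{(1)}$ produces, after summing the Ramanujan sums over $l$ and $r$, an arithmetic Euler product $C_{\alpha,\beta,\gamma,\delta,h,k}(s)$ times zeta factors. Shifting the contour past $s=0$ and invoking a nontrivial functional equation
\[
(hk)^{-s}h^{\alpha}k^{\gamma}C_{\alpha,\beta,\gamma,\delta,h,k}(-s)=(hk)^{s}h^{-\delta}k^{-\beta}C_{-\delta,-\gamma,-\beta,-\alpha,h,k}(s)
\]
shows that the residue at $s=0$ is a crossed $Z$-term while the new integral exactly cancels the off-diagonal of $I^{(2)}$. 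In addition, both the diagonal and the off-diagonal throw off auxiliary residues $J^{(1)},J^{(2)}$ at $2s=-\alpha-\gamma$, $-\alpha-\delta$, etc., which depend on the cutoff function $G$ and must cancel between the two; verifying this cancellation requires further Euler-product identities relating $C$ to $B$. None of this machinery appears in your sketch, and without it the pieces will not combine into the six displayed $Z$-terms.
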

Remarks.
\begin{itemize}
\item The general problem of estimating \eqref{eq:mollfiedsharpcutoff} can be deduced from Theorem \ref{thm:mainresult} simply by summing over $h$ and $k$, and taking $w$ to be a smooth approximation to the characteristic function $\chi_{[T/2,T]}$ of the interval $[T/2,T]$, vanishing $O(T^{1-\varepsilon})$ away from the endpoints (actually we take two different such functions, one bounded from above by $\chi_{[T/2,T]}$, and one bounded from below).
\item Here the size of the error term is entirely dependent upon Theorem 1 of \cite{DFI}.  Any improvement upon their result (using extra averaging or perhaps by using spectral methods, e.g.) would immediately improve our Theorem \ref{thm:mainresult}.  Our focus has been on development of the main terms; we have made no attempt to optimize the error terms.
\item In case $T_0 = T^{1-\varepsilon}$ then the main term (of size $\approx (hk)^{-\half} T$) is larger than the error term provided $hk \leq T^{\frac{2}{11}-\varepsilon}$.  The estimate of \eqref{eq:mainresult} continues to hold for any coprime $h$ and $k$ (so e.g. we have $I(h,k) = O(T^{1-\varepsilon})$ provided $hk \leq T^{\frac27 - \varepsilon}$).
\item The case $h=k=1$ with $T_0 = T^{12/13 + \varepsilon}$ can give an asymptotic formula for the fourth moment of the zeta function with an error of size $O(T^{12/13 + \varepsilon})$.  Here the error term depends on the exponent $9/4$ of the $(T/T_0)^{9/4}$ term in the error term above; \cite{DFI} remark that this exponent can likely be reduced (see their remark following Theorem 1).  This error term is not strong; we simply mention it to illustrate the flexibility of the result.
\item It is not obvious from inspection that the main term of \eqref{eq:mainresult} is holomorphic in terms of the shift parameters (for example, $Z_{\alpha,\beta,\gamma,\delta}(0)$ has poles at $\alpha=-\gamma$, $\alpha=-\delta$, $\beta = -\gamma$, and $\beta=-\delta$), but the symmetries of the expression imply that the poles cancel to form a holomorphic function.  Lemma 2.5.1 of \cite{CFKRS05} exhibits an integral representation for the permutation sum that proves the holomorphy.
\end{itemize}

\subsection{Structure of the proof}
The starting point of the proof is to use an approximate functional equation to express $I(h,k)$ as a divisor sum (see \eqref{eq:AFEforI}).  The divisor sum splits naturally into diagonal terms and off-diagonal terms.  The diagonal terms are (easily) treated in Section \ref{section:diagonal}.  To treat the off-diagonal terms we use the results of \cite{DFI}, which are reproduced in Section \ref{section:delta}.  In order to apply their results, we need to first simplify our formulas so that our test functions satisfy the conditions of their Theorem; we perform these manipulations in Section \ref{section:cleaning}.  The estimations up to this point determine the size of the error terms in Theorem \ref{thm:mainresult} (and hence the range of uniformity of $h$ and $k$ with respect to $T$.

The rest of this paper is for the purpose of simplifying the main terms given by Proposition \ref{prop:deltaconclusion}.  It turns out that there is a series of rather surprising identities which allows for considerable simplification of the main terms.  The delta method gives a main term that involves an arithmetical factor that is expressed as a certain sum of Ramanujan sums.  It is remarkable that this arithmetical factor satisfies a functional equation relating $s$ and $-s$; this formula is given by Theorem \ref{thm:CFE}.  This functional equation plays a key role in simplifying the main terms.

\subsection{Conventions}
We use the common practice in analytic number theory to let $\varepsilon$ denote an arbitrarily small positive constant which may vary from line to line.  We also assume that $T$ is sufficiently large with respect to $\varepsilon$ (so that we may say that $\zeta(1 + \alpha + \gamma + 2s)$ is holomorphic for $\text{Re}(s) > \varepsilon$, for example).
Furthermore, in our notation we occasionally drop the dependence of various quantities on the shift parameters.

\subsection{Acknowledgements}
The authors are indebted to Brian Conrey for his encouragement and support.  Some of this work was completed while the second author visited Bristol University, and he thanks them for the invitation and hospitality.

\section{Setup}
\subsection{The approximate functional equation}
We require an approximate functional equation for the product of zeta functions, motivated by a version used by \cite{HB79}.  Recall that the functional equation for the Riemann zeta function is given in its symmetric form by
\begin{equation}
\Lambda(s):=\pi^{-\frac{s}{2}} \Gamma({\textstyle \frac{s}{2}}) \zeta(s) = \Lambda(1-s).
\end{equation}
Thus
\begin{equation}
\zeta({\textstyle \half + s} ) = X(s) \zeta({\textstyle \half - s}),  
\end{equation}
where
\begin{equation}
X( s) = \pi^{s} \frac{\Gamma(\frac{\half -s}{2})}{\Gamma(\frac{\half + s}{2})}.
\end{equation}

We have
\begin{myprop}[Approximate functional equation]
\label{prop:AFE}
Let $G(s)$ be an even, entire function of rapid decay as 
$|s| \rightarrow \infty$ in any fixed strip $|\text{Re}(s)| \leq C$ and let
\begin{equation}
\label{eq:V}
V_{\alpha, \beta, \gamma, \delta,t}(x) = \frac{1}{2 \pi i} \int_{(1)} \frac{G(s)}{s} g_{\alpha, \beta, \gamma, \delta}(s,t) x^{-s} ds,
\end{equation}
where
\begin{equation}
g_{\alpha, \beta, \gamma, \delta}(s,t) = 
\frac{\Gamma\left(\frac{\half + \alpha + s +it}{2} \right)}{\Gamma\left(\frac{\half + \alpha +it }{2} \right)} 
\frac{\Gamma\left(\frac{\half + \beta + s +it}{2} \right)}{\Gamma\left(\frac{\half + \beta +it }{2} \right)} 
\frac{\Gamma\left(\frac{\half + \gamma + s -it}{2} \right)}{\Gamma\left(\frac{\half + \gamma -it }{2} \right)}
\frac{\Gamma\left(\frac{\half + \delta + s -it }{2} \right)}{\Gamma\left(\frac{\half + \delta -it }{2} \right)}.
\end{equation}
Furthermore, set
\begin{equation}
X_{\alpha,\beta,\gamma,\delta,t} = \pi^{\alpha + \beta + \gamma + \delta} 
\frac{\Gamma(\frac{\half -\alpha - it}{2})}{\Gamma(\frac{\half + \alpha + it}{2})}
\frac{\Gamma(\frac{\half -\beta - it}{2})}{\Gamma(\frac{\half + \beta + it}{2})}
\frac{\Gamma(\frac{\half -\gamma + it}{2})}{\Gamma(\frac{\half + \gamma - it}{2})}
\frac{\Gamma(\frac{\half -\delta + it}{2})}{\Gamma(\frac{\half + \delta - it}{2})}
.
\end{equation}
Then
\begin{multline}
\zeta({\textstyle \half + \alpha + it}) 
\zeta({\textstyle \half + \beta + it}) 
\zeta({\textstyle \half + \gamma - it}) 
\zeta({\textstyle \half + \delta - it})
\\
=
\sum_{m,n} \frac{\sigma_{\alpha,\beta}(m) \sigma_{\gamma,\delta}(n)}{(mn)^{\half }} \left(\frac{m}{n}\right)^{-it} V_{\alpha, \beta, \gamma, \delta,t} \left(\pi^2 mn \right)
\\
+ X_{\alpha,\beta,\gamma,\delta,t}
   \sum_{m,n} \frac{\sigma_{-\gamma,-\delta}(m) \sigma_{-\alpha,-\beta}(n)}{(mn)^{\half }} \left(\frac{m}{n}\right)^{-it} V_{ -\gamma, -\delta, -\alpha, -\beta, t} \left(\pi^2 mn\right) + O((1 + |t|)^{-2007}).
\end{multline}
\end{myprop}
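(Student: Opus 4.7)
The plan is to prove the AFE by evaluating a single Mellin--Barnes contour integral in two different ways. Define
\[
J := \frac{1}{2\pi i}\int_{(1)} \frac{G(s)}{s}\, g_{\alpha,\beta,\gamma,\delta}(s,t)\, \pi^{-2s}\, \zeta(\tfrac12+\alpha+s+it)\zeta(\tfrac12+\beta+s+it)\zeta(\tfrac12+\gamma+s-it)\zeta(\tfrac12+\delta+s-it)\, ds.
\]
On the line $\text{Re}(s)=1$ each zeta factor is an absolutely convergent Dirichlet series, so expanding the product and interchanging sum with integral yields
\[
J = \sum_{m,n} \frac{\sigma_{\alpha,\beta}(m)\sigma_{\gamma,\delta}(n)}{(mn)^{1/2}} \left(\frac{m}{n}\right)^{-it} V_{\alpha,\beta,\gamma,\delta,t}(\pi^2 mn),
\]
which is exactly the first Dirichlet sum on the right side of the claimed identity.

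Next I shift the contour from $(1)$ to $(-1)$. The only residue producing a main term is at $s=0$, where $g_{\alpha,\beta,\gamma,\delta}(0,t)=1$ and the standard normalization $G(0)=1$ combine with the simple pole of $1/s$ to produce precisely $\zeta(\tfrac12+\alpha+it)\zeta(\tfrac12+\beta+it)\zeta(\tfrac12+\gamma-it)\zeta(\tfrac12+\delta-it)$. The other residues crossed --- from the simple poles of the four zetas at $s=\tfrac12-\alpha-it,\tfrac12-\beta-it,\tfrac12-\gamma+it,\tfrac12-\delta+it$, and from the four leftmost gamma poles of $g(s,t)$ sitting at $\text{Re}(s)\approx -\tfrac12$ with imaginary part $\approx \mp t$ --- all lie at points whose imaginary part is comparable to $\pm t$, so the rapid decay of $G$ forces each such contribution to be $O_A((1+|t|)^{-A})$, absorbing them into the stated error term. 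On the shifted contour I then apply the symmetric functional equation $\zeta(\tfrac12+w)=X(w)\zeta(\tfrac12-w)$ to each of the four zetas, substitute $s\mapsto -s$ (returning the contour to $(1)$), and relabel $m\leftrightarrow n$, so that the emerging Dirichlet series assumes the shape $\sigma_{-\gamma,-\delta}(m)\sigma_{-\alpha,-\beta}(n)(mn)^{-1/2}(m/n)^{-it}(\pi^2 mn)^{-s}$ required to recognize $V_{-\gamma,-\delta,-\alpha,-\beta,t}(\pi^2 mn)$.

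The crux of the argument is then an elementary gamma identity: after these manipulations the integrand carries a combined factor
\[
\pi^{2s}\, g_{\alpha,\beta,\gamma,\delta}(-s,t)\, X(\alpha-s+it)X(\beta-s+it)X(\gamma-s-it)X(\delta-s-it),
\]
and a direct cancellation of the four numerator gammas of $g(-s,t)$ against the four denominator gammas in the $X$-factors shows that this equals $\pi^{-2s}\, X_{\alpha,\beta,\gamma,\delta,t}\, g_{-\gamma,-\delta,-\alpha,-\beta}(s,t)$. With this identity the shifted integral collapses to exactly $X_{\alpha,\beta,\gamma,\delta,t}$ times the dual sum written in terms of $V_{-\gamma,-\delta,-\alpha,-\beta,t}(\pi^2 mn)$. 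Assembling the residue equation $J = \mathrm{Res}_{s=0} + (\text{shifted integral}) + O((1+|t|)^{-2007})$ and rearranging then yields the stated AFE with the correct sign on the dual term. The only non-routine step is the gamma identity above: the definitions of $g$ and $X_{\alpha,\beta,\gamma,\delta,t}$ are engineered precisely so that this cancellation goes through cleanly, while contour shifting, Dirichlet-series convergence on the chosen lines, and bounding of the parasitic residues are all routine for AFE derivations of this type.
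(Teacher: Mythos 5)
Your proposal is correct and is essentially the paper's own argument: the same Mellin--Barnes integral (you simply normalize by the gamma factors at $s=0$ from the outset, whereas the paper works with the completed products $\Lambda_{\alpha,\beta,\gamma,\delta,t}(s)$ and its symmetric functional equation, dividing by $\Gamma_{\alpha,\beta,\gamma,\delta,t}(0)$ at the end), with the contour shift from $(1)$ to $(-1)$, the residue at $s=0$ giving the main term, the parasitic residues at height $\approx \pm t$ killed by the rapid decay of $G$, and the substitution $s \mapsto -s$ plus gamma cancellation producing $X_{\alpha,\beta,\gamma,\delta,t}\,g_{-\gamma,-\delta,-\alpha,-\beta}(s,t)$. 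Your gamma identity checks out, and your bookkeeping (including the implicit normalization $G(0)=1$, which the paper also uses tacitly, and your explicit accounting of the gamma poles near $\mathrm{Re}(s)=-\tfrac12$) matches the paper's proof.
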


Remark.  It will simplify later computations to proscribe certain zeros of $G(s)$ near $s = \half$; it is not essential.  Precisely, we assume $G$ is divisible by an even polynomial $Q_{\alpha,\beta,\gamma,\delta}(s)$ which is symmetric in the parameters $\alpha,\beta,\gamma,\delta$, invariant under $\alpha \rightarrow -\alpha$, or $\beta \rightarrow -\beta$, etc. and zero at $s = \half - \frac{\alpha + \gamma}{2}$ and $s = \half - \alpha$ (as well as other nearby points by symmetry), and that $G(s)/Q_{\alpha,\beta,\gamma,\delta}(s)$ is independent of $\alpha,\beta,\gamma,\delta$.  An admissible choice for $G(s)$ is $Q_{\alpha,\beta,\gamma, \delta}(s) \exp(s^2)$, but it is not necessary to specify a particular function $G$.

\begin{proof}
Let
\begin{equation}
\Lambda_{\alpha, \beta, \gamma, \delta,t}(s) = \Lambda(\thalf + s + \alpha +it) \Lambda(\thalf + s + \beta +it) \Lambda(\thalf + s + \gamma - it)\Lambda(\thalf + s + \delta - it) 
\end{equation}
and consider
\begin{equation}
I_1 = \frac{1}{2 \pi i} \int_{(1)} \Lambda_{\alpha, \beta, \gamma, \delta,t}(s) \frac{G(s)}{s} ds.
\end{equation}
Move the line of integration to $(-1)$, passing the pole at $s=0$ and the various poles of the zeta functions at $\half - \alpha - it$, etc.  Let $I_2$ be the new integral.  The residue at $s=0$ is
\begin{equation}
\Lambda(\thalf + \alpha +it) \Lambda(\thalf + \beta +it) \Lambda(\thalf +  \gamma - it)\Lambda(\thalf  + \delta - it).
\end{equation}
The other residues give $O((1 + |t|)^{-2007})$ due to the rapid decay of $G$ in the imaginary direction.

After the change of variables $s \rightarrow -s$ and the application of the functional equation
\begin{equation}
\Lambda_{\alpha, \beta, \gamma, \delta,t}(-s) = \Lambda_{-\gamma, -\delta, -\alpha, -\beta, t}(s),
\end{equation}
we obtain
\begin{equation}
I_2 = - \frac{1}{2 \pi i} \int_{(1)} \Lambda_{-\gamma, -\delta, -\alpha, -\beta,t}(s) \frac{G(s)}{s} ds.
\end{equation}
Set 
\begin{equation}
\zeta_{\alpha, \beta, \gamma, \delta,t}(s) = \zeta({\textstyle \half + \alpha +s + it}) 
\zeta({\textstyle \half + \beta +s + it}) 
\zeta({\textstyle \half + \gamma +s- it}) 
\zeta({\textstyle \half + \delta +s- it}),
\end{equation} 
and let
\begin{equation}
\Lambda_{\alpha, \beta, \gamma, \delta,t}(s) = \Gamma_{\alpha, \beta, \gamma, \delta,t}(s) \zeta_{\alpha, \beta, \gamma, \delta,t}(s),
\end{equation}
so that
\begin{equation}
\Gamma_{\alpha, \beta, \gamma, \delta,t}(s) = \pi^{-1 - 2s - \frac{\alpha + \beta + \gamma + \delta}{2}} 
\Gamma\left({\textstyle \frac{\half + \alpha + s + it}{2}} \right) \Gamma\left({\textstyle \frac{\half + \beta + s + it}{2}} \right)
\Gamma\left({\textstyle \frac{\half + \gamma + s - it}{2} } \right) \Gamma\left({\textstyle \frac{\half + \delta + s - it}{2}} \right).
\end{equation}
Then we have
\begin{multline}
\zeta_{\alpha, \beta, \gamma, \delta,t}(0) = \frac{1}{2 \pi i} \int_{(1)} \zeta_{\alpha, \beta, \gamma, \delta,t}(s) \frac{\Gamma_{\alpha, \beta, \gamma,\delta,t}(s)}{\Gamma_{\alpha, \beta, \gamma,\delta,t}(0)} \frac{G(s)}{s} ds 
\\
+ \frac{1}{2 \pi i} \int_{(1)} \zeta_{-\gamma, -\delta, -\alpha, -\beta, t}(s) \frac{\Gamma_{-\gamma, -\delta, -\alpha, -\beta, t}(s)}{\Gamma_{\alpha, \beta, \gamma,\delta, t}(0)} \frac{G(s)}{s} ds.
\end{multline}
An easy computation shows
\begin{equation}
\frac{\Gamma_{\alpha, \beta, \gamma,\delta,t}(s)}{\Gamma_{\alpha, \beta, \gamma,\delta,t}(0)} = \pi^{-2s} g_{\alpha, \beta, \gamma, \delta}(s,t)
\end{equation}
and
\begin{equation}
\frac{\Gamma_{-\gamma, -\delta, -\alpha, -\beta, t}(s)}{\Gamma_{\alpha, \beta, \gamma,\delta,t}(0)} 
= \pi^{-2s} X_{\alpha,\beta,\gamma,\delta,t} \, g_{-\gamma, -\delta, -\alpha, -\beta}(s, t),
\end{equation}
since
\begin{equation}
\frac{\Gamma_{-\gamma, -\delta, -\alpha, -\beta, t}(0)}{\Gamma_{\alpha, \beta, \gamma,\delta,t}(0)}
=
X_{\alpha,\beta,\gamma,\delta,t}.
\end{equation}
Expanding $\zeta_{*, *, *, *,t}(s)$ into absolutely convergent Dirichlet series and reversing the order of summation and integration completes the proof.
\end{proof}

\subsection{A formula for the twisted fourth moment}
We require an expression for the twisted integral $I(h,k)$.  Applying the approximate functional equation gives
\begin{multline}
\label{eq:AFEforI}
I(h,k) = \sum_{m,n} \frac{\sigma_{\alpha,\beta}(m) \sigma_{\gamma,\delta}(n)}{(mn)^{\half }}  \int_{-\infty}^{\infty} \left(\frac{hm}{kn}\right)^{-it} V_{\alpha, \beta, \gamma, \delta,t} \left(\pi^2 mn \right) w(t) dt
\\
+    \sum_{m,n} \frac{\sigma_{-\gamma,-\delta}(m) \sigma_{-\alpha,-\beta}(n) }{(mn)^{\half }} 
\int_{-\infty}^{\infty} \left(\frac{hm}{kn}\right)^{-it} X_{\alpha,\beta,\gamma,\delta,t} V_{-\gamma, -\delta, -\alpha, -\beta, t} \left(\pi^2 mn\right) w(t) dt.
\end{multline}
Let $I^{(1)}(h,k)$ be the first sum and $I^{(2)}(h,k)$ be the second sum above.  Opening the integral formula for $V$ gives
\begin{equation}
I^{(1)}(h,k) = \sum_{m,n} \frac{\sigma_{\alpha,\beta}(m) \sigma_{\gamma,\delta}(n)}{(mn)^{\half }}  \frac{1}{2 \pi i}  \int_{(1)} \frac{G(s)}{s} (\pi^2 mn)^{-s} \int_{-\infty}^{\infty} \left(\frac{hm}{kn}\right)^{-it}   g_{\alpha, \beta, \gamma, \delta}(s,t)   w(t) dt ds,
\end{equation}
and similarly
\begin{multline}
I^{(2)}(h,k) = \sum_{m,n} \frac{\sigma_{-\gamma,-\delta}(m) \sigma_{-\alpha,-\beta}(n) }{(mn)^{\half }}  
\\
\frac{1}{2 \pi i}  \int_{(1)} \frac{G(s)}{s} (\pi^2 mn)^{-s} \int_{-\infty}^{\infty} \left(\frac{hm}{kn}\right)^{-it}  X_{\alpha,\beta,\gamma,\delta,t} g_{-\gamma, -\delta, -\alpha, -\beta, }(s,t)   w(t) dt ds.
\end{multline}
An exercise with Stirling's approximation gives
\begin{equation}
X_{\alpha,\beta,\gamma,\delta,t} = \left(\frac{t}{2 \pi}\right)^{-\alpha - \beta - \gamma - \delta}  (1 + O(t^{-1}))
\end{equation}
and
\begin{equation}
\label{eq:gapprox}
g_{\alpha, \beta, \gamma, \delta}(s,t) = \left(\frac{t}{2}\right)^{2s} (1 + O(t^{-1})).
\end{equation}
for $t \rightarrow + \infty$.  In the approximation for $g(s,t)$ the dependence on $s$ in the error term is at most polynomial in $s$.  Note that to leading order $g(s,t)$ does not depend on the shift parameters.
The formulas for $I^{(1)}$ and $I^{(2)}$ are similar enough that we can study $I^{(1)}$ and then deduce an analogous formula for $I^{(2)}$ by changing the shift parameters via $\alpha \leftrightarrow -\gamma$, $\beta \leftrightarrow -\delta$, and multiplying by $(t/2\pi)^{-\alpha - \beta - \gamma - \delta}$.

\section{Diagonal terms}
\label{section:diagonal}
It is easy to see the contribution to $I^{(1)}$ of the diagonal terms $I_D^{(1)}(h,k)$ with $hm=kn$ is given by
\begin{equation}
I_D^{(1)}(h,k) = \frac{1}{\sqrt{hk}}  \int_{-\infty}^{\infty} w(t) 
\frac{1}{2 \pi i}  \int_{(1)} \frac{G(s)}{s} (\pi^2 hk)^{-s}  g_{\alpha, \beta, \gamma, \delta}(s,t) 
\sum_{n = 1}^{\infty} \frac{\sigma_{\alpha,\beta}(kn) \sigma_{\gamma,\delta}(hn)}{n^{1 + 2s}} 
ds dt.
\end{equation}
We compute the Dirichlet series
\begin{equation}
\sum_{n=1}^{\infty} \frac{\sigma_{\alpha,\beta}(kn) \sigma_{\gamma,\delta}(hn)}{n^{1+s}} 
\end{equation}
using that $\sigma_{\alpha,\beta}(kn) \sigma_{\gamma,\delta}(hn)$ is multiplicative in $n$.  In terms of its Euler product, it is
\begin{multline}
\label{eq:Eulerproduct}
\left(\prod_{(p, hk) = 1} \sum_{j=0}^{\infty} \frac{\sigma_{\alpha,\beta}(p^{j}) \sigma_{\gamma,\delta}(p^{j})}{p^{j(1+s)}}\right) \left( \prod_{p | h} \sum_{j=0}^{\infty} \frac{\sigma_{\alpha,\beta}(p^{j}) \sigma_{\gamma,\delta}(p^{h_p + j})}{p^{j(1+s)}} \right) 
\\
\times \left( \prod_{p | k} \sum_{j=0}^{\infty} \frac{\sigma_{\alpha,\beta}(p^{k_p + j}) \sigma_{\gamma,\delta}(p^{j})}{p^{j(1+s)}} \right),
\end{multline}
which equals $Z_{\alpha,\beta,\gamma,\delta,h,k}(s)$ since
\begin{equation}
\label{eq:fourzetas}
\sum_{n=1}^{\infty} \frac{\sigma_{\alpha,\beta}(n) \sigma_{\gamma,\delta}(n)}{n^{1+s}} = A_{\alpha,\beta,\gamma,\delta}(s).
\end{equation}
Hence
\begin{equation}
I_D^{(1)}(h,k) = \frac{1}{\sqrt{hk}}  \int_{-\infty}^{\infty} w(t) 
\frac{1}{2 \pi i}  \int_{(\varepsilon)} \frac{G(s)}{s} (\pi^2 hk)^{-s}  g_{\alpha, \beta, \gamma, \delta}(s,t) 
Z_{\alpha,\beta,\gamma,\delta,h,k}(2s) 
ds dt.
\end{equation}
Applying Stirling's approximation \eqref{eq:gapprox} gives
\begin{equation}
I_D^{(1)}(h,k) = \frac{1}{\sqrt{hk}}  \int_{-\infty}^{\infty} w(t) 
\frac{1}{2 \pi i}  \int_{(\varepsilon)} \frac{G(s)}{s} \left(\frac{t^2}{4\pi^2 hk}\right)^{s}   
Z_{\alpha,\beta,\gamma,\delta,h,k}(2s) 
ds dt + O((hk)^{-\half } T^{\varepsilon}).
\end{equation}
Note that $I_D^{(1)}(h,k)$ is holomorphic in $\alpha,\beta,\gamma, \delta$ sufficiently small.

Now move $\text{Re}(s)$ to $-\frac14 + \varepsilon$, crossing a pole at $s = 0$ as well as four poles at $2s = -\alpha - \gamma$, etc.  The integral on the new line is
\begin{equation}
\ll (hk)^{-\frac14 + \varepsilon} T^{\varepsilon} \int_{T/2}^{4T} t^{-\half + \varepsilon} |w(t)| dt \ll (hk)^{-\frac14 + \varepsilon} T^{\half + \varepsilon},
\end{equation}
uniformly in terms of the shift parameters.  The pole at $s=0$ gives
\begin{equation}
\frac{1}{\sqrt{hk}}  \int_{-\infty}^{\infty} 
Z_{\alpha,\beta,\gamma,\delta,h,k}(0) w(t) dt.
\end{equation}
The pole at $2s = -\alpha - \gamma$ gives
\begin{equation}
\frac{\text{Res}_{s=\frac{-\alpha-\gamma}{2}}(Z_{\alpha,\beta,\gamma,\delta,h,k}(2s)) }{(hk)^{\half - \frac{\alpha + \gamma}{2}}} \frac{G(\frac{-\alpha - \gamma}{2})}{\frac{-\alpha - \gamma}{2}}  
\int_{-\infty}^{\infty} \left(\frac{t}{2\pi}\right)^{-\alpha - \gamma} w(t) dt.
\end{equation}
The other three poles are gotten by taking obvious permutations of the shift variables.

The computation of the diagonal terms $I_{D}^{(2)}(h,k)$ of $I^{(2)}(h,k)$ is similar.  We get
\begin{multline}
I_{D}^{(2)}(h,k) = \frac{1}{\sqrt{hk}}  \int_{-\infty}^{\infty} \left(\frac{t}{2 \pi}\right)^{-\alpha - \beta - \gamma - \delta} w(t) 
\frac{1}{2 \pi i}  \int_{(\varepsilon)} \frac{G(s)}{s} \left(\frac{t^2}{4\pi^2 hk}\right)^{s}   
Z_{-\gamma,-\delta,-\alpha,-\beta,h,k}(2s) ds dt \\ + O((hk)^{-\half } T^{\varepsilon}).
\end{multline}
The pole at $s=0$ gives
\begin{equation}
\frac{1}{\sqrt{hk}}  \int_{-\infty}^{\infty} \left(\frac{t}{2\pi}\right)^{-\alpha - \beta -\gamma - \delta} w(t) 
Z_{-\gamma,-\delta,-\alpha,-\beta,h,k}(0) dt.
\end{equation}
The pole at $2s = \beta + \delta$ gives
\begin{equation}
\frac{\text{Res}_{s=\frac{\beta + \delta}{2}}(Z_{-\gamma,-\delta,-\alpha,-\beta,h,k}(2s))}{(hk)^{\half + \frac{\beta + \delta}{2}}}  \frac{G(\frac{\beta + \delta}{2})}{\frac{\beta + \delta}{2}} \int_{-\infty}^{\infty} \left(\frac{t}{2\pi}\right)^{-\alpha  -\gamma } w(t) dt.
\end{equation}
The other three poles are obtained by permuting the shifts.

These developments are summarized with the following
\begin{myprop}
\label{prop:diagonal}
We have
\begin{multline}
I_D^{(1)}(h,k) = \frac{1}{\sqrt{hk}} \int_{-\infty}^{\infty} Z_{\alpha,\beta,\gamma,\delta,h,k}(0) w(t) dt 
\\
 + J^{(1)}_{\alpha,\beta,\gamma,\delta} + J^{(1)}_{\beta, \alpha,\gamma,\delta} + J^{(1)}_{\alpha,\beta,\delta, \gamma} + J^{(1)}_{\beta, \alpha,\delta, \gamma} 
+ O\left(\frac{T^{\half + \varepsilon}}{(hk)^{\frac14}}  \right),
\end{multline}
and
\begin{multline}
I_{D}^{(2)}(h,k) = \frac{1}{\sqrt{hk}} \int_{-\infty}^{\infty} \left(\frac{t}{2\pi}\right)^{-\alpha - \beta - \gamma - \delta} Z_{-\gamma,-\delta,-\alpha,-\beta,h,k}(0) w(t) dt 
\\
 \qquad + J^{(2)}_{\alpha,\beta,\gamma,\delta} + J^{(2)}_{\beta, \alpha,\gamma,\delta} + J^{(2)}_{\alpha,\beta,\delta, \gamma} + J^{(2)}_{\beta, \alpha,\delta, \gamma} 
+ O\left(\frac{T^{\half + \varepsilon}}{(hk)^{\frac14}}  \right) ,
\end{multline}
where 
\begin{equation}
J^{(1)}_{\alpha,\beta,\gamma,\delta}  = \frac{\text{Res}_{s=\frac{-\alpha-\gamma}{2}}(Z_{\alpha,\beta,\gamma,\delta,h,k}(2s))}{(hk)^{\half - \frac{\alpha + \gamma}{2}}} \frac{G(\frac{-\alpha - \gamma}{2})}{\frac{-\alpha - \gamma}{2}}  
\int_{-\infty}^{\infty} \left(\frac{t}{2\pi}\right)^{-\alpha - \gamma} w(t) dt,
\end{equation}
and
\begin{equation}
J^{(2)}_{\alpha,\beta,\gamma,\delta}  = \frac{\text{Res}_{s=\frac{\beta + \delta}{2}}(Z_{-\gamma,-\delta,-\alpha,-\beta,h,k}(2s))}{(hk)^{\half + \frac{\beta + \delta}{2}}}  \frac{G(\frac{\beta + \delta}{2})}{\frac{\beta + \delta}{2}} \int_{-\infty}^{\infty} \left(\frac{t}{2\pi}\right)^{-\alpha  -\gamma } w(t) dt.
\end{equation}
\end{myprop}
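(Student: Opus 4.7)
The plan is to reduce the diagonal sum to a single contour integral whose integrand involves $Z_{\alpha,\beta,\gamma,\delta,h,k}(2s)$, then shift the $s$-contour and collect residues. Since $(h,k) = 1$, the diagonal equation $hm = kn$ forces $m = kn'$, $n = hn'$ for a unique positive integer $n'$, and substituting into the formula for $I^{(1)}$ gives
\begin{equation}
I_D^{(1)}(h,k) = \frac{1}{\sqrt{hk}} \int_{-\infty}^{\infty} w(t) \frac{1}{2\pi i} \int_{(1)} \frac{G(s)}{s}(\pi^2 hk)^{-s} g_{\alpha,\beta,\gamma,\delta}(s,t) D(s)\,ds\,dt,
\end{equation}
where $D(s) = \sum_{n' \ge 1} \sigma_{\alpha,\beta}(kn')\sigma_{\gamma,\delta}(hn') n'^{-1-2s}$.

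Next I would identify $D(s) = Z_{\alpha,\beta,\gamma,\delta,h,k}(2s)$ via its Euler product. Multiplicativity of $\sigma_{\alpha,\beta}(kn')\sigma_{\gamma,\delta}(hn')$ in $n'$, combined with the coprimality of $h$ and $k$ (so that every prime divides at most one of $h$, $k$), gives the factorization \eqref{eq:Eulerproduct}. Primes coprime to $hk$ contribute the series \eqref{eq:fourzetas} at argument $2s$, which is $A_{\alpha,\beta,\gamma,\delta}(2s)$ once we divide by the factor at infinity, and primes dividing $h$ or $k$ contribute exactly the local factors in the definition \eqref{eq:B} of $B_{\alpha,\beta,\gamma,\delta,h,k}(2s)$.

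The main step is then to shift the $s$-contour from $(\varepsilon)$ down to $(-\tfrac{1}{4} + \varepsilon)$. The integrand is meromorphic there with a simple pole at $s = 0$ from the $1/s$ factor, together with four simple poles inherited from the zeta factors in $A$ at $2s \in \{-\alpha-\gamma, -\alpha-\delta, -\beta-\gamma, -\beta-\delta\}$. Applying the Stirling approximation $g_{\alpha,\beta,\gamma,\delta}(s,t) = (t/2)^{2s}(1+O(t^{-1}))$ and normalizing $G(0) = 1$, the residue at $s = 0$ yields the stated main term $\tfrac{1}{\sqrt{hk}} \int Z_{\alpha,\beta,\gamma,\delta,h,k}(0)\, w(t)\, dt$, while the four shifted residues produce $J^{(1)}_{\alpha,\beta,\gamma,\delta}$ together with the three permutations obtained by swapping $\alpha \leftrightarrow \beta$ and/or $\gamma \leftrightarrow \delta$. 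On the new line, rapid decay of $G$ in the imaginary direction together with $\int t^{-1/2+\varepsilon}|w(t)|\,dt \ll T^{1/2+\varepsilon}$ give the stated error $O((hk)^{-1/4} T^{1/2+\varepsilon})$; the $O(t^{-1})$ Stirling error integrates to $O((hk)^{-1/2} T^{\varepsilon})$, which is absorbed.

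For $I_D^{(2)}(h,k)$ the identical strategy applies after inserting the factor $X_{\alpha,\beta,\gamma,\delta,t} = (t/2\pi)^{-\alpha-\beta-\gamma-\delta}(1 + O(t^{-1}))$ and replacing the shift parameters by $(-\gamma, -\delta, -\alpha, -\beta)$. The poles of $Z_{-\gamma,-\delta,-\alpha,-\beta,h,k}(2s)$ then lie at $2s \in \{\beta+\delta, \beta+\gamma, \alpha+\delta, \alpha+\gamma\}$, and their residues produce $J^{(2)}_{\alpha,\beta,\gamma,\delta}$ and its three permutations. There is no substantive analytic obstacle: the entire argument is routine contour-shifting and residue calculation, and the only thing requiring care is the bookkeeping of the four permutations and the verification that all $O(t^{-1})$ and contour-shift errors are absorbed into the stated $O(T^{1/2+\varepsilon}/(hk)^{1/4})$.
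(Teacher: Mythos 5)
Your proposal is correct and follows essentially the same route as the paper: substitute $m=kn'$, $n=hn'$ using $(h,k)=1$, identify the resulting Dirichlet series with $Z_{\alpha,\beta,\gamma,\delta,h,k}(2s)$ via the Euler product \eqref{eq:Eulerproduct} and \eqref{eq:fourzetas}, apply the Stirling approximation \eqref{eq:gapprox}, and shift the contour to $\mathrm{Re}(s)=-\tfrac14+\varepsilon$, with the pole at $s=0$ giving the main term, the four poles at $2s=-\alpha-\gamma$, etc.\ giving the $J^{(1)}$ (resp.\ $J^{(2)}$) terms, and the new line contributing $O\bigl((hk)^{-1/4}T^{1/2+\varepsilon}\bigr)$. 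The normalization $G(0)=1$ you invoke is implicit in the paper's approximate functional equation, so nothing is missing.
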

Remarks.  
\begin{itemize} \item Since $G$ can be chosen from a wide class of functions, the terms of the form $J^{(1)}$ and $J^{(2)}$ should not contribute to $I(h,k)$.  Indeed, we show that the off-diagonal terms consist of other main terms minus the sum of $J^{(1)}$ and $J^{(2)}$'s, and thus these terms do not persist in the final formula for $I(h,k)$, as expected.
\item Note that $J^{(1)}_{\alpha,\beta,\gamma,\delta}$ (and $J^{(2)}$ of course) are not holomorphic in terms of the shift parameters because there are various poles, but of course the poles must cancel when the $J$'s and the term involving $Z$ are summed. 
\end{itemize}

\section{Off-diagonal terms: initial cleaning}
\label{section:cleaning}
Now we begin to treat the contribution to $I^{(1)}(h,k)$ of the off-diagonal terms $I_O^{(1)}(h,k)$, say (similarly, let $I_{O}^{(2)}(h,k)$ denote the off-diagonal term contribution to $I^{(2)}(h,k)$). 
First note that we may truncate the sum over $m$ and $n$ so that $mn \leq T^{2 + \varepsilon}$ by moving $\text{Re}(s)$ to the right, using \eqref{eq:gapprox}.

The goal of the rest of this paper is to prove the following
\begin{myprop}
\label{prop:offdiagonal}
We have
\begin{multline}
I_{O}^{(1)}(h,k) + I_{O}^{(2)}(h,k) = \frac{1}{\sqrt{hk}} \int_{-\infty}^{\infty} w(t) \left( \left(\frac{t}{2 \pi}\right)^{-\alpha - \gamma} Z_{-\gamma,\beta,-\alpha,\delta,h,k}(0) \right. \\
 + \left(\frac{t}{2 \pi}\right)^{-\alpha - \delta} Z_{-\delta,\beta,\gamma,-\alpha,h,k}(0)  
 + \left(\frac{t}{2 \pi}\right)^{-\beta - \gamma} Z_{\alpha,-\gamma, -\beta,\delta,h,k}(0) \\
\left. + \left(\frac{t}{2 \pi}\right)^{-\beta - \delta} Z_{\alpha,-\delta,\gamma,-\beta,h,k}(0)  \right) dt 
- J^{(1)}_{\alpha,\beta,\gamma,\delta} - J^{(1)}_{\beta, \alpha,\gamma,\delta} - J^{(1)}_{\alpha,\beta,\delta, \gamma} - J^{(1)}_{\beta, \alpha,\delta, \gamma} 
\\
- J^{(2)}_{\alpha,\beta,\gamma,\delta} - J^{(2)}_{\beta, \alpha,\gamma,\delta} - J^{(2)}_{\alpha,\beta,\delta, \gamma} - J^{(2)}_{\beta, \alpha,\delta, \gamma}
+ O(T^{\frac34 + \varepsilon} (hk)^{\frac78} (T/T_0)^{\frac54}),
\end{multline}
uniformly for $\alpha,\beta,\gamma,\delta \ll (\log{T})^{-1}$.
\end{myprop}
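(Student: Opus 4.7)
The plan is to apply the shifted convolution (delta-method) result of Duke--Friedlander--Iwaniec reproduced in Section \ref{section:delta} to the off-diagonal divisor sum, and then to identify the resulting main terms with the four ``swapped'' $Z$-expressions of Proposition \ref{prop:offdiagonal}, modulo cancellation against the spurious $J^{(1)}$ and $J^{(2)}$ contributions produced by the diagonal analysis. First I would isolate the shift $r := hm - kn \neq 0$ in
\begin{equation}
I^{(1)}_O(h,k) = \sum_{hm \neq kn} \frac{\sigma_{\alpha,\beta}(m) \sigma_{\gamma,\delta}(n)}{(mn)^{\half}} \int_{-\infty}^{\infty} \left(\frac{hm}{kn}\right)^{-it} V_{\alpha,\beta,\gamma,\delta,t}(\pi^2 mn) w(t) \, dt,
\end{equation}
and perform the $t$-integral for fixed $(m,n,r)$. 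Stirling on $g_{\alpha,\beta,\gamma,\delta}(s,t)$ together with repeated integration by parts shows that the resulting weight is essentially supported in $mn \ll T^{2+\varepsilon}$ and in $|r| \ll T^{\varepsilon} \sqrt{hk/mn}\,(T/T_0)$, with negligible contribution outside these ranges. The ``initial cleaning'' of this section consists in repackaging the $s$-integral, the $t$-integral, and the Stirling remainders into a single smooth test function $F(m,n)$ with controlled $L^\infty$ and derivative norms, in a form admissible for the DFI theorem.

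Next I would feed the resulting sum $\sum_{hm-kn=r} \sigma_{\alpha,\beta}(m)\sigma_{\gamma,\delta}(n) F(m,n)$ into Theorem 1 of \cite{DFI} for each fixed $r \neq 0$, treating $I^{(2)}_O(h,k)$ in the completely analogous way via the substitution $\alpha \leftrightarrow -\gamma$, $\beta \leftrightarrow -\delta$ together with the extra factor $(t/2\pi)^{-\alpha-\beta-\gamma-\delta}$ as observed at the end of Section 2. The DFI theorem outputs a main term, which after summation over $r$ and reassembly of the $s$- and $t$-integrals collapses into a contour integral whose integrand is a product of shifted zeta values times an arithmetic factor built from Ramanujan sums supported on divisors of $hk$. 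Tracking through the dependence of the DFI bound on the norms of $F$ and the range $mn \ll T^{2+\varepsilon}$, $|r| \ll T^{1+\varepsilon}/T_0$ gives the total error $O(T^{3/4+\varepsilon}(hk)^{7/8}(T/T_0)^{5/4})$ claimed in the proposition.

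The main obstacle, and the heart of the paper, is the passage from this raw DFI main term to the six permutation-symmetric $Z$-terms of Theorem \ref{thm:mainresult}. The key tool is the ``crossing functional equation'' of Theorem \ref{thm:CFE}, which relates the arithmetic Ramanujan-sum factor at $s$ to its counterpart at $-s$. Using this functional equation I would fold the contour back across the origin, so that the residue at $s=0$ produces exactly the four terms $Z_{-\gamma,\beta,-\alpha,\delta,h,k}(0)$, $Z_{-\delta,\beta,\gamma,-\alpha,h,k}(0)$, $Z_{\alpha,-\gamma,-\beta,\delta,h,k}(0)$, $Z_{\alpha,-\delta,\gamma,-\beta,h,k}(0)$ coming from $I^{(1)}_O$, together with their $(t/2\pi)^{-\alpha-\beta-\gamma-\delta}$-weighted analogues coming from $I^{(2)}_O$. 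The remaining poles encountered in the contour shift lie precisely at $s = -(\alpha+\gamma)/2$, $-(\alpha+\delta)/2$, $-(\beta+\gamma)/2$, $-(\beta+\delta)/2$, and by symmetry of the $Q$-factor built into $G$ their residues match $-J^{(1)}_{\alpha,\beta,\gamma,\delta}, \dots, -J^{(2)}_{\beta,\alpha,\delta,\gamma}$ in the notation of Proposition \ref{prop:diagonal}. This matching is what makes the spurious $J$-contributions in the diagonal and off-diagonal cancel when one sums Propositions \ref{prop:diagonal} and \ref{prop:offdiagonal}, producing the final formula \eqref{eq:mainresult}, which is holomorphic in $\alpha,\beta,\gamma,\delta$. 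The hard step is verifying the functional equation of Theorem \ref{thm:CFE} with all Euler factors at primes dividing $hk$ correctly tracked; the rest is essentially bookkeeping of residues.
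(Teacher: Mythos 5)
Your proposal follows the same architecture as the paper: truncate and smooth the off-diagonal sum so the theorem of \cite{DFI} applies, sum its main terms over $r$, evaluate the Ramanujan-sum factor, and then use the functional equation of Theorem \ref{thm:CFE} to move the contour through the origin so that residues produce the main terms while the reflected integral is played off against the second piece of the approximate functional equation. However, two steps are not right as written. First, the accounting of where the four $Z(0)$ terms come from: in the actual mechanism the reflected integral (after $s\to -s$ and Corollary \ref{coro:CFE}) cancels $M^{(2)}_{\alpha,\beta,\gamma,\delta}$, i.e.\ the whole contribution of $I_O^{(2)}$, exactly, and the four terms $Z_{-\gamma,\beta,-\alpha,\delta,h,k}(0),\dots,Z_{\alpha,-\delta,\gamma,-\beta,h,k}(0)$ then arise once each as the $s=0$ residues of the four permuted $M^{(1)}$'s. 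Your description --- those four terms ``coming from $I^{(1)}_O$, together with their $(t/2\pi)^{-\alpha-\beta-\gamma-\delta}$-weighted analogues coming from $I^{(2)}_O$'' --- would produce eight main terms and does not match the statement of Proposition \ref{prop:offdiagonal}; the fully swapped, weighted term belongs to the diagonal $I_D^{(2)}$ of Proposition \ref{prop:diagonal}, not to $I_O^{(2)}$.

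Second, and more substantively, the identification of the residues is not ``bookkeeping'' and has nothing to do with the polynomial factor $Q$ built into $G$ (that factor only suppresses spurious poles near $s=\half$ in the approximate functional equation; the relevant $G$-values at $s=\tfrac{-\alpha-\gamma}{2}$, $\tfrac{-\beta-\delta}{2}$ appear identically in both the $P$'s and the $J$'s, so they explain nothing). To recognize the $s=0$ residue as $Z_{-\gamma,\beta,-\alpha,\delta,h,k}(0)$ and the residues at $2s=-\alpha-\gamma$ and $2s=-\beta-\delta$ as $-J^{(1)}_{\alpha,\beta,\gamma,\delta}$ and $-J^{(2)}_{\alpha,\beta,\gamma,\delta}$, you need (i) the closed evaluation of the double sum over $l$ and $r$ of Ramanujan sums as $\zeta(\alpha+\gamma+2s)\zeta(1+\beta+\delta+2s)\zeta(2-\alpha+\beta-\gamma+\delta)^{-1}C_{\alpha,\beta,\gamma,\delta,h,k}(s)$ (Corollary \ref{coro:arithmeticfactor}), and (ii) the prime-by-prime identities $h^{\alpha}k^{\gamma}C_{\alpha,\beta,\gamma,\delta,h,k}(0)=B_{-\gamma,\beta,-\alpha,\delta,h,k}(0)$ and their analogues at the shifted points (Lemmas \ref{lemma:CB0}, \ref{lemma:CB1}, \ref{lemma:CB2}), which are verifications of the same nature and difficulty as Theorem \ref{thm:CFE} itself. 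Your sketch omits both, so the crucial claim that ``the residues match the $Z$'s and the $J$'s'' is unsupported. A smaller point: tracking the DFI error over the $r$-sum and the dyadic blocks actually yields $(T/T_0)^{9/4}$, consistent with Proposition \ref{prop:deltaconclusion} and Theorem \ref{thm:mainresult}, rather than the $(T/T_0)^{5/4}$ your bookkeeping claims to produce.
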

We obtain Theorem \ref{thm:mainresult} by combining Propositions \ref{prop:diagonal} and \ref{prop:offdiagonal}.  Note that all these $J$ terms exactly cancel!

Let
\begin{equation}
f^*(x,y) =  \frac{1}{2 \pi i}  \int_{(\varepsilon)} \frac{G(s)}{s} \left(\frac{hk}{\pi^2 xy}\right)^{s} \frac{1}{T}  \int_{-\infty}^{\infty}  \left(\frac{x}{y}\right)^{-it}   g(s,t)   w(t) dt ds,
\end{equation}
so that
\begin{equation}
I_O^{(1)}(h,k) =  T \mathop{\sum_m \sum_n}_{hm \neq kn} \frac{\sigma_{\alpha,\beta}(m) \sigma_{\gamma,\delta}(n)}{\sqrt{mn}} f^*(hm,kn).
\end{equation}
Our goal is to apply the result of \cite{DFI} to this sum.

Now apply a dyadic partition of unity to the sums over $m$ and $n$.  That is, suppose $W_0(x)$ is a smooth, nonnegative function with support in $[1,2]$ such that
\begin{equation}
\sum_M W_0({\textstyle \frac{x}{M}}) = 1,
\end{equation}
where $M$ runs over a sequence of real numbers, with $\# \{M | M \leq X \} \ll \log{X}$.  Let
\begin{equation}
I_{M,N}(h,k) = \frac{T}{\sqrt{MN}} \mathop{\sum  \sum }_{hm \neq kn} \sigma_{\alpha,\beta}(m) \sigma_{\gamma,\delta}(n)
W\left({\textstyle \frac{m}{M} }\right) W\left({\textstyle \frac{n}{N} }\right) f^*(hm,kn),
\end{equation}
where
\begin{equation}
W(x) = x^{-\half} W_0(x).
\end{equation}
Note that we may assume $MN \leq T^{2 + \varepsilon}$.  Then
\begin{equation}
\sum_{M, N} I_{M,N}(h,k) = I_O^{(1)}(h,k).
\end{equation}

It is not difficult to see that $f^*(x,y)$ is small unless $x$ and $y$ are close to each other, due to cancellation in the integral arising from the factor $(x/y)^{-it}$.  Precisely, by repeated integration by parts with respect to $t$, we obtain
\begin{equation}
\frac{1}{T} \int_{-\infty}^{\infty} \left(\frac{x}{y}\right)^{-it}   g(s,t)   w(t) dt \ll_j
\frac{1}{T |\log(x/y)|^j} \int_{-\infty}^{\infty} \left| \frac{\partial^j}{\partial t^j} g(s,t) w(t) \right| dt
\ll_j \frac{P_j(|s|) T^{2 \text{Re}(s)}}{|\log(x/y)|^j T_0^{j}},
\end{equation}
for any $j=0,1,2, \dots$, where $P_j$ is a polynomial.  Thus we may assume  $|\log(\frac{x}{y})| \ll T_0^{-1 + \varepsilon}$ by taking $j$ sufficiently large.  

Letting $hm - kn = r$, we get
\begin{multline}
\label{eq:thermoscap}
I_{M,N}(h,k) 
\\
= \frac{T}{\sqrt{MN}} \sum_{r \neq 0} \mathop{\sum \sum}_{\substack{hm - kn = r \\ |\log(\frac{hm}{kn})| \ll T_0^{-1 + \varepsilon}}} \sigma_{\alpha,\beta}(m) \sigma_{\gamma,\delta}(n) W\left({\textstyle \frac{m}{M} }\right) W\left({\textstyle \frac{n}{N} }\right) f^*(hm,kn) + O(T^{-2007}).
\end{multline}
Note we may assume 
\begin{equation}
hM \asymp kN.
\end{equation}

If $x-y = r$ then
\begin{equation}
f^*(x,y) =\frac{1}{2 \pi i}  \int_{(\varepsilon)} \frac{G(s)}{s} \left(\frac{hk}{\pi^2 xy}\right)^{s} \frac{1}{T}  \int_{-\infty}^{\infty} \left(1 + \frac{r}{y}\right)^{-it}  g(s,t)   w(t) dt ds.
\end{equation}

We summarize this development with
\begin{myprop}
\label{prop:JO}
We have
\begin{multline}
\label{eq:JO}
I_{M,N}(h,k) = \frac{T}{\sqrt{MN}} \sum_{0 < |r| \ll \frac{\sqrt{hkMN}}{T_0} T^{\varepsilon}} \mathop{\sum \sum}_{hm - kn = r} \sigma_{\alpha,\beta}(m) \sigma_{\gamma,\delta}(n) f(hm,kn) 
+ O\left(T^{-2007} \right),
\end{multline}
where
\begin{equation}
\label{eq:f}
f(x,y) = W\left(\frac{x}{hM}\right) W\left(\frac{y}{kN}\right) \frac{1}{2 \pi i}  \int_{(\varepsilon)} \frac{G(s)}{s} \left(\frac{hk}{\pi^2 xy}\right)^{s} \frac{1}{T}  \int_{-\infty}^{\infty} \left(1 + \frac{r}{y}\right)^{-it} g(s,t)   w(t) dt ds.
\end{equation}
\end{myprop}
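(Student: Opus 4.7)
The proposition packages the dyadic decomposition and the integration-by-parts truncation carried out in the preceding paragraphs, so the proof is essentially a bookkeeping exercise. My plan is to begin from
$$I_{M,N}(h,k) = \frac{T}{\sqrt{MN}} \mathop{\sum\sum}_{hm \neq kn} \sigma_{\alpha,\beta}(m)\sigma_{\gamma,\delta}(n)\, W(m/M) W(n/N) f^*(hm,kn),$$
and reparametrize the off-diagonal pairs by the non-zero integer $r = hm - kn$. The smooth cutoffs $W(m/M)$ and $W(n/N)$ are absorbed into the oscillatory kernel by setting $f(x,y) = W(x/(hM))\, W(y/(kN))\, f^*(x,y)$ and then, on the locus $x = y+r$, replacing $(x/y)^{-it}$ by $(1+r/y)^{-it}$; this produces the explicit formula \eqref{eq:f}. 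Since the resulting oscillatory factor now depends on $r$, a different kernel $f = f_r$ is attached to each value of $r$, which is reflected in the fact that the sum over $r$ sits outside the sum over $m,n$ in \eqref{eq:JO}.

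The substantive content is the truncation $0 < |r| \ll \sqrt{hkMN}\, T^{\varepsilon}/T_0$, and this is exactly the integration-by-parts bound displayed just before \eqref{eq:thermoscap}. Differentiating the integrand $(x/y)^{-it} g(s,t) w(t)$ in $t$ repeatedly, each application produces a factor $|\log(x/y)|^{-1}$, each derivative on $w$ costs $T_0^{-1}$, and the $t$-derivatives of $g(s,t)$ contribute at most polynomial growth in $|s|$ (by Stirling). The rapid decay of $G(s)$ on vertical lines absorbs these polynomial factors, so choosing $j$ sufficiently large yields an overall bound $O_j(T^{-j})$ on the tails where $|\log(hm/kn)| \gg T_0^{-1+\varepsilon}$, which we absorb into $O(T^{-2007})$. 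On the support of the $W$'s one has $hm \asymp hM$ and $kn \asymp kN$; if $hM \not\asymp kN$ then $|\log(hm/kn)|$ is bounded below by an absolute constant and the same by-parts estimate kills the entire dyadic block, whereas if $hM \asymp kN$ then $|\log(hm/kn)| \asymp |r|/(kN) \asymp |r|/\sqrt{hkMN}$, converting $|\log(\cdot)| \ll T_0^{-1+\varepsilon}$ into precisely the claimed range for $|r|$.

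There is no real obstacle here. The only step requiring modest care is ensuring that the $t$-integration by parts interacts cleanly with the $s$-contour integral, since each derivative in $t$ produces polynomial growth in $|s|$; this is handled routinely by pairing Stirling's estimate for $g(s,t)$ with the fact that $G(s)$ is of rapid decay in vertical strips, which keeps the $s$-integral absolutely convergent after as many integrations by parts in $t$ as one wishes to perform.
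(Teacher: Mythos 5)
Your proposal is correct and follows essentially the same route as the paper: absorb the dyadic weights into the kernel, rewrite $(x/y)^{-it}$ as $(1+r/y)^{-it}$ on the locus $hm-kn=r$, and truncate $r$ via repeated integration by parts in $t$ (each step costing $|\log(x/y)|^{-1}T_0^{-1}$ up to polynomial factors in $|s|$ absorbed by the decay of $G$), using $hM\asymp kN$ to convert $|\log(hm/kn)|\ll T_0^{-1+\varepsilon}$ into $|r|\ll\sqrt{hkMN}\,T^{\varepsilon}/T_0$. Your explicit handling of the case $hM\not\asymp kN$ and of the passage from the $\log$-condition to the $r$-range merely spells out steps the paper leaves implicit.
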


\section{Applying the delta method}
\label{section:delta}
The expression \eqref{eq:JO} is well-suited for application of the main result of \cite{DFI}, which we reproduce here for completeness.
\begin{mytheo}[Duke, Friedlander, Iwaniec]
Let $f$ be a smooth function on $\mr^+ \times \mr^+$ satisfying
\begin{equation}
x^i y^j f^{(i,j)}(x,y) \ll \left(1 + \frac{x}{X}\right)^{-1} \left(1 + \frac{y}{Y}\right)^{-1} P^{i + j},
\end{equation}
for some $P, X, Y \geq 1$ and all $i, j \geq 0$ with the implied constant depending on $i$ and $j$ alone.  Let
\begin{equation}
D_f(h,k;r) = \sum_{hm - kn = r} d(m) d(n) f(hm,kn).
\end{equation}
Then
\begin{equation}
D_f(h,k;r) = \int_{\max(0,r)}^{\infty} g(x, x - r) dx + O(P^{5/4} (X + Y)^{1/4} (XY)^{1/4 + \varepsilon}),
\end{equation}
where $g(x,y) = f(x,y) \Lambda_{hkr}(x,y)$ and $\Lambda$ is a certain explicitly given infinite series.  The implied constant  depends on $\varepsilon$ only.
\end{mytheo}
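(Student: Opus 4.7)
The approach is the $\delta$-method of Duke--Friedlander--Iwaniec. The plan is to detect the equality $hm - kn = r$ using a smooth expansion of the delta symbol, apply Voronoi/Poisson summation to dualise the two divisor sums in $m$ and $n$, extract the zero-frequency contribution as the main term, and estimate the remainder using Weil's bound for Kloosterman sums.

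In more detail, one inserts an expansion of the shape
\begin{equation*}
\delta(u) = \frac{1}{Q} \sum_{q \leq Q} \frac{1}{q} \sumstar_{a \bmod q} e\!\left(\frac{au}{q}\right) \Delta\!\left(\frac{u}{qQ}\right) + (\text{smooth correction}), \qquad |u| \leq Q^{2},
\end{equation*}
applied to $u = hm - kn - r$ with $Q \asymp \sqrt{hkXY}$, so that the side condition $|u| \leq Q^{2}$ holds automatically on the support of $f$. The additive character $e(a(hm - kn - r)/q)$ separates the $m$ and $n$ variables; writing $d(m) = \sum_{m_1 m_2 = m} 1$ and applying Poisson summation modulo $q$ to each of the inner variables turns the divisor sums into sums over dual frequencies, while the resulting inner sum over $a \bmod q$ collapses to a Kloosterman sum $S(\overline{h}r, \,\cdot\,;q)$.

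The zero-frequency contribution, combined with the complete Ramanujan-type sums over $q$ in the $\delta$-expansion, assembles the singular series $\Lambda_{hkr}$ and produces the main term $\int_{\max(0,r)}^{\infty} f(x, x-r)\,\Lambda_{hkr}(x, x-r)\,dx$. For the nonzero frequencies one applies Weil's bound $|S(a,b;q)| \ll \tau(q)(a,b,q)^{1/2} q^{1/2}$, together with the smoothness of $f$ (each derivative costing a factor of $P$) which forces effective truncation of the dual variables, and with the Mellin decay of $\Delta$ in the $q$-aspect that truncates $q \leq Q$.

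The main obstacle is the precise balancing of parameters. Taking $Q$ too small leaves a large smooth-correction tail, while taking $Q$ too large degrades the Weil estimate; optimising $Q \asymp \sqrt{hkXY}$ against the dual ranges is exactly what yields the exponents $5/4$, $1/4$, $1/4$ in the final bound $P^{5/4}(X+Y)^{1/4}(XY)^{1/4+\varepsilon}$. The most delicate technical step is the uniform $\ell^{1}$-control of the Mellin/Fourier transforms of the test functions after Poisson, ensuring that the smoothness parameter $P$ enters with exponent only $5/4$ rather than the larger power that would come from naive integration by parts. This is the technical heart of the DFI argument, and accepting it as a black box is precisely what the present section does.
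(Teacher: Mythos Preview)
Your sketch is a reasonable high-level outline of the Duke--Friedlander--Iwaniec argument, but there is nothing in the paper to compare it against: the paper does not prove this theorem at all. It is quoted verbatim from \cite{DFI} (``which we reproduce here for completeness'') and used as a black box; the paper's own contribution begins only after the theorem is stated, where the authors adapt the \emph{main term} to shifted divisor functions $\sigma_{\alpha,\beta}$ and then devote the rest of the paper to simplifying that main term. So your final sentence --- ``accepting it as a black box is precisely what the present section does'' --- is exactly right, and in fact is the whole story here.

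A couple of minor technical comments on your sketch, should you wish to tighten it: the parameter $Q$ in the $\delta$-expansion is not chosen as $\sqrt{hkXY}$ but rather $Q \asymp \sqrt{X+Y}$ (roughly the square root of the size of the variables $hm, kn$, not of their product), and the dualisation is more naturally phrased via the Voronoi summation formula for the divisor function (equivalently, the functional equation of the Estermann $D$-function) rather than writing $d(m) = \sum_{m_1 m_2 = m} 1$ and applying Poisson to each factor; the latter route produces a double dual sum and makes the bookkeeping of the Kloosterman sums messier. But these are details of the DFI paper, not of the present one.
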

We chose not to explicitly write the main term because we require a modified expression due to the fact that we require sums involving $\sigma_{\alpha,\beta}(m) \sigma_{\gamma,\delta}(n)$ rather than $d(m) d(n)$.  Introducing these shift parameters slightly perturbs the main term but does not alter the error terms (since the shifts are small).  Following the arguments of \cite{DFI}, one deduces
\begin{multline}
\label{eq:deltashift}
\sum_{hm - kn = r} \sigma_{\alpha,\beta}(m) \sigma_{\gamma,\delta}(n) f(hm,kn) = N_{\alpha,\beta,\gamma,\delta}(h,k;r) 
+ N_{\beta,\alpha,\gamma,\delta}(h,k;r) 
\\
+ N_{\alpha,\beta,\delta,\gamma}(h,k;r) + N_{\beta,\alpha,\delta,\gamma}(h,k;r) + O(P^{5/4} (X + Y)^{1/4} (XY)^{1/4 + \varepsilon}),
\end{multline}
where
\begin{multline}
N_{\alpha,\beta,\gamma,\delta}(h,k;r) = \frac{\zeta(1- \alpha + \beta) \zeta(1 - \gamma + \delta)}{h^{1 - \alpha} k^{1 - \gamma}} 
\int x^{-\alpha} (x-r)^{-\gamma} f(x,x-r) dx
\\
\sum_{l=1}^{\infty} \frac{c_l(r) (h,l)^{1 - \alpha + \beta} (k,l)^{1 - \gamma + \delta}}{l^{2-\alpha + \beta - \gamma + \delta}}.
\end{multline}
Here
\begin{equation}
c_l(r) = \sumstar_{a \shortmod{l}} e\left(\frac{ar}{l}\right)
\end{equation}
is the Ramanujan sum.  To derive this expression for the main term we used a modified form of Jutila's version of the Voronoi summation formula, namely that for $\alpha \neq \beta$,
\begin{multline}
\sum_n \sigma_{\alpha,\beta}(n) e\left(\frac{dn}{q}\right) g(n) 
\\
= \frac{\zeta(1-\alpha+\beta)}{q^{1-\alpha+\beta}} \int x^{-\alpha} g(x) dx + \frac{\zeta(1+\alpha-\beta)}{q^{1+\alpha-\beta}} \int x^{-\beta} g(x) dx + \dots,
\end{multline}
which can be deduced from the functional equation of the Estermann function as modified by Motohashi; see \cite{Moto}, Lemma 3.7.

Note that $N_{\alpha,\beta,\gamma,\delta}(h,k;r)$ has a simple pole when $\alpha = \beta$ or $\gamma = \delta$, but the sum of the four such terms in \eqref{eq:deltashift} is holomorphic because the poles cancel by symmetry.  In the forthcoming development of the main terms we shall concentrate on the contribution of one of the four such terms and obtain the others by a simple symmetry argument.  A slight logical issue arises due to the lack of holomorphy of $N_{\alpha,\beta,\gamma,\delta}$ so that the uniformity of the error terms with respect to the shift parameters becomes compromised.  Fortunately, it is not difficult to see that the uniformity must be regained after summing the four terms, and we shall leave some of these simple arguments implicit in this work.

Now we apply this result to $f$ given by \eqref{eq:f}, which satisfies the necessary conditions with
\begin{equation}
X = hM, \qquad Y = kN, \qquad P = \frac{T}{T_0} T^{\varepsilon}.
\end{equation}
Note $X \asymp Y \asymp \sqrt{hkMN}$.
Hence we obtain, using Proposition \ref{prop:JO},
\begin{multline}
I_{M,N} = \frac{T}{\sqrt{MN}} \sum_{0 \neq |r| \ll \frac{T^{\varepsilon} \sqrt{hkMN}}{T_0} } \left( N_{\alpha,\beta,\gamma,\delta}(h,k;r) + \dots + N_{\beta,\alpha,\delta,\gamma}(h,k;r) \right)
\\
+ \sum_{0 \neq |r| \ll \frac{T^{\varepsilon} \sqrt{hkMN}}{T_0} } O\left(T^{\varepsilon} \left(T/T_0 \right)^{\frac54}(hkMN)^{\frac38} \right) .
\end{multline}
Summing over $r$ gives
\begin{multline}
I_{M,N} = \frac{T}{\sqrt{MN}} \sum_{0 \neq |r| \ll \frac{T^{\varepsilon} \sqrt{hkMN}}{T_0} } \left(N_{\alpha,\beta,\gamma,\delta}(h,k;r) + \dots + N_{\beta,\alpha,\delta,\gamma}(h,k;r) \right) 
\\
+
O\left(\frac{(hkMN)^{\frac78}}{T_0} \frac{T^{\frac54}}{T_0^{\frac54}} T^{\varepsilon}  \right).
\end{multline}

Let
\begin{equation}
\label{eq:I}
I_{\alpha,\beta,\gamma,\delta}^{(1)} = \sum_{M,N} \sum_{r \neq 0} \frac{T}{\sqrt{MN}}  N_{\alpha,\beta,\gamma,\delta}(h,k;r).
\end{equation}
If $|r| \geq T_0^{-1} \sqrt{hkMN} T^{\varepsilon}$ then the usual integration by parts argument shows that $N_{\alpha,\beta,\gamma,\delta}(h,k;r)$ is small, so we may freely extend the summation to all $r \neq 0$.
In summary, the delta method then gives the following
\begin{myprop}
\label{prop:deltaconclusion}
We have
\begin{equation}
I_O^{(1)}(h,k) = I_{\alpha,\beta,\gamma,\delta}^{(1)} + I_{\beta, \alpha, \gamma,\delta}^{(1)} + I_{\alpha,\beta ,\delta, \gamma}^{(1)} + I_{\beta, \alpha, \delta, \gamma}^{(1)} + 
O(T^{\frac34 + \varepsilon} (hk)^{\frac78} (T/T_0)^{\frac94}).
\end{equation}
\end{myprop}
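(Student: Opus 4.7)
The proof is essentially an assembly: combine Proposition~\ref{prop:JO} (which has already done the work of isolating the off-diagonal contribution as a finite sum of shifted convolution sums) with the modified Duke--Friedlander--Iwaniec delta-method estimate~\eqref{eq:deltashift}, then carefully sum the resulting error over $r$ and over the dyadic scales $M,N$.

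My plan is as follows. First, I would verify that the test function $f(x,y)$ defined in \eqref{eq:f} satisfies the hypothesis of the DFI theorem with parameters
\[
X = hM, \qquad Y = kN, \qquad P = (T/T_0)\, T^{\varepsilon}.
\]
This is a routine computation: the $W(x/hM)\,W(y/kN)$ factors give the decay in $X$ and $Y$, while differentiation in $x,y$ under the $t$-integral and integration by parts produce the factor $P$ because each derivative on $(1+r/y)^{-it}$ yields $t/y \ll T/(kN T_0)$ times harmless polynomial growth in $s$ (absorbed by the rapid decay of $G(s)$ on the $\varepsilon$-line). Once the hypothesis is verified, one applies \eqref{eq:deltashift} to every inner sum $\sum_{hm-kn=r}\sigma_{\alpha,\beta}(m)\sigma_{\gamma,\delta}(n) f(hm,kn)$ appearing in \eqref{eq:JO}, producing the four main terms $N_{\alpha,\beta,\gamma,\delta}$, $N_{\beta,\alpha,\gamma,\delta}$, $N_{\alpha,\beta,\delta,\gamma}$, $N_{\beta,\alpha,\delta,\gamma}$ together with the stated error per $r$.

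Next, I would carry out the error book-keeping. The DFI error per $r$ is
\[
P^{5/4}(X+Y)^{1/4}(XY)^{1/4+\varepsilon} \ll (T/T_0)^{5/4}(hkMN)^{3/8+\varepsilon},
\]
using $X\asymp Y \asymp \sqrt{hkMN}$. Proposition~\ref{prop:JO} restricts $|r| \ll T^{\varepsilon}\sqrt{hkMN}/T_0$, so the $r$-sum contributes a factor $T^{\varepsilon}\sqrt{hkMN}/T_0$. The prefactor $T/\sqrt{MN}$ and the bound $MN \leq T^{2+\varepsilon}$ then give, for each dyadic pair $(M,N)$, a contribution
\[
\frac{T}{\sqrt{MN}} \cdot \frac{\sqrt{hkMN}}{T_0} \cdot (T/T_0)^{5/4}(hkMN)^{3/8+\varepsilon}
\ll T^{3/4+\varepsilon}(hk)^{7/8}(T/T_0)^{9/4}.
\]
Summing over the $O(\log^2 T)$ dyadic pairs $(M,N)$ is absorbed into $T^\varepsilon$, yielding the stated error term.

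Finally, I would match the main term with the definition of $I^{(1)}_{\alpha,\beta,\gamma,\delta}$ in \eqref{eq:I}. The definition sums $N_{\alpha,\beta,\gamma,\delta}(h,k;r)$ over all $r\neq 0$, not just $|r|\ll T^\varepsilon\sqrt{hkMN}/T_0$, so one must check that extending the range introduces only negligible error. This is done by the usual integration-by-parts argument in $t$ applied inside $f(hm,kn)$: the factor $(1+r/(kn))^{-it}$ gives arbitrary savings in $T_0/|r| \cdot (kN/|r|)\cdot T^\varepsilon$ once $|r| \gg T^\varepsilon\sqrt{hkMN}/T_0$, so the tail of the $r$-sum is bounded by $T^{-A}$ for any $A$. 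After this extension, summing over dyadic $(M,N)$ reproduces $I^{(1)}_{\alpha,\beta,\gamma,\delta}$ exactly, and the same argument applied to the other three permutations of shifts gives the remaining three terms.

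The only subtle point I anticipate is the lack of holomorphy in the shifts of each individual $N_{\alpha,\beta,\gamma,\delta}(h,k;r)$ (simple poles at $\alpha=\beta$ and $\gamma=\delta$), which in principle threatens the uniformity of the error in the shift parameters. As already flagged in the discussion following \eqref{eq:deltashift}, these poles cancel in the symmetric four-term sum; so the cleanest way to handle this is to prove the identity first for generic (distinct) small shifts, where each $N$-term is individually holomorphic and the error analysis above is straightforward, and then extend to the boundary $\alpha=\beta$ (respectively $\gamma=\delta$) by continuity, using that the four-term main term and the left-hand side $I_O^{(1)}(h,k)$ are both holomorphic in $\alpha,\beta,\gamma,\delta \ll (\log T)^{-1}$.
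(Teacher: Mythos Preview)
Your proposal is correct and follows essentially the same route as the paper: verify the DFI hypotheses for $f$ with $X=hM$, $Y=kN$, $P=(T/T_0)T^\varepsilon$, apply \eqref{eq:deltashift} inside Proposition~\ref{prop:JO}, sum the error over $r$ and over dyadic $M,N$ using $MN\le T^{2+\varepsilon}$, and extend the $r$-sum to all $r\neq 0$ by integration by parts. One small imprecision: differentiating $(1+r/y)^{-it}$ in $y$ produces a factor of size $tr/y^2$ rather than $t/y$, but since $|r|\ll T^\varepsilon\sqrt{hkMN}/T_0$ and $y\asymp kN\asymp\sqrt{hkMN}$ this still gives $y\cdot tr/y^2\ll (T/T_0)T^\varepsilon$, so your value of $P$ and the remainder of the argument are unaffected.
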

Here the main terms are of rough size $(hk)^{-\half} T_0$, so if $T_0 \gg T^{1 - \varepsilon}$ then this expression has power savings in the error term provided $hk \ll T^{\frac{2}{11} - \varepsilon}$.

Now our main task is to develop simpler formulas for the main terms, which we carry out in the following section.

\section{Computing the main terms}
\label{section:mainterms}
\subsection{Integral manipulations}
In this section we compute $I_{\alpha,\beta,\gamma,\delta}^{(1)}$ which is
given by
\begin{multline}
I_{\alpha,\beta,\gamma,\delta}^{(1)} =  \sum_{r \neq 0} \sum_{M,N} \frac{T}{\sqrt{MN}} \frac{\zeta(1- \alpha + \beta) \zeta(1 - \gamma + \delta)}{h^{1 - \alpha} k^{1 - \gamma}} 
\int_{\max\{0,r\}}^{\infty} x^{ - \alpha} (x-r)^{-\gamma} 
\\
W\left(\frac{x}{hM}\right) W\left(\frac{x-r}{kN}\right)  \frac{1}{2 \pi i}  \int_{(\varepsilon)} \frac{G(s)}{s} \left(\frac{hk}{\pi^2 x(x-r)}\right)^{s} \frac{1}{T}  \int_{-\infty}^{\infty} \left(1-\frac{r}{x}\right)^{it} g(s,t)   w(t) dt ds dx
\\
\sum_{l=1}^{\infty} \frac{c_l(r) (h,l)^{1 - \alpha + \beta} (k,l)^{1 - \gamma + \delta}}{l^{2-\alpha + \beta - \gamma + \delta}}.
\end{multline}
We will show
\begin{multline}
I_{\alpha,\beta,\gamma,\delta}^{(1)} =  \frac{\zeta(1- \alpha + \beta) \zeta(1 - \gamma + \delta)}{\zeta(2-\alpha + \beta - \gamma + \delta)} \left[ \frac{1}{\sqrt{hk}} 
\int_{-\infty}^{\infty} w(t) \left(\frac{t}{2 \pi}\right)^{-\alpha - \gamma} \right.
\\
\frac{1}{2 \pi i}  \int_{(\varepsilon)} \frac{G(s)}{s}  
\zeta(1-\alpha  -\gamma -2s) \zeta(1 + \beta + \delta + 2s) h^{\alpha} k^{\gamma} (hk)^s C_{\alpha,\beta,\gamma,\delta,h,k}(s)
ds dt
\\
\left. + O\left(\frac{1}{\sqrt{hk}} (hkT)^{\varepsilon} \right) \right],
\end{multline}
where $C_{\alpha,\beta,\gamma,\delta}(s)$ is a certain finite Euler product given below by Corollary \ref{coro:arithmeticfactor}.

Using $W(x) = x^{-\half} W_0(x)$ and summing over $M$ and $N$ gives
\begin{multline}
I_{\alpha,\beta,\gamma,\delta}^{(1)} = \frac{\zeta(1- \alpha + \beta) \zeta(1 - \gamma + \delta)}{h^{\half - \alpha} k^{\half - \gamma}} \sum_{r \neq 0}
\sum_{l=1}^{\infty}  \frac{c_l(r) (h,l)^{1 - \alpha + \beta} (k,l)^{1 - \gamma + \delta}}{l^{2-\alpha + \beta - \gamma + \delta}}
\\
\int_{\max\{0,r\}}^{\infty} x^{ -\half- \alpha} (x-r)^{-\half -\gamma} 
\frac{1}{2 \pi i}  \int_{(\varepsilon)} \frac{G(s)}{s} \left(\frac{hk}{\pi^2 x(x-r)}\right)^{s}  \int_{-\infty}^{\infty} \left(1-\frac{r}{x}\right)^{it} g(s,t)   w(t) dt ds dx.
\end{multline}
Let $I = I^+ + I^-$ where $I^+$ corresponds to the sum over $r > 0$, and $I^-$ corresponds to the complement.  The computations of $I^+$ and $I^-$ are similar yet slightly different.  We have
\begin{multline}
I^+ = \frac{\zeta(1- \alpha + \beta) \zeta(1 - \gamma + \delta)}{h^{\half - \alpha} k^{\half - \gamma}} \sum_{r =1}^{\infty}
\sum_{l=1}^{\infty}  \frac{c_l(r) (h,l)^{1 - \alpha + \beta} (k,l)^{1 - \gamma + \delta}}{l^{2-\alpha + \beta - \gamma + \delta}}
\\
\int_{r}^{\infty} x^{ -\half- \alpha} (x-r)^{-\half -\gamma} 
\frac{1}{2 \pi i}  \int_{(\varepsilon)} \frac{G(s)}{s} \left(\frac{hk}{\pi^2 x(x-r)}\right)^{s}  \int_{-\infty}^{\infty} \left(1-\frac{r}{x}\right)^{it} g(s,t)   w(t) dt ds dx,
\end{multline}
and
\begin{multline}
I^- = \frac{\zeta(1- \alpha + \beta) \zeta(1 - \gamma + \delta)}{h^{\half - \alpha} k^{\half - \gamma}} \sum_{r =1}^{\infty}
\sum_{l=1}^{\infty}  \frac{c_l(r) (h,l)^{1 - \alpha + \beta} (k,l)^{1 - \gamma + \delta}}{l^{2-\alpha + \beta - \gamma + \delta}}
\\
\int_{0}^{\infty} x^{ -\half- \alpha} (x+r)^{-\half -\gamma} 
\frac{1}{2 \pi i}  \int_{(\varepsilon)} \frac{G(s)}{s} \left(\frac{hk}{\pi^2 x(x+r)}\right)^{s}  \int_{-\infty}^{\infty} \left(1+\frac{r}{x}\right)^{it} g(s,t)   w(t) dt ds dx.
\end{multline}

We first compute $I^+$. Let $K^{\pm}$ be the triple integral appearing in the above expressions for $I^{\pm}$.  Changing variables via $x \rightarrow rx + r$ gives
\begin{multline}
K^+ = r^{-\alpha - \gamma}
\int_{0}^{\infty} (x+1)^{ -\half- \alpha} x^{-\half -\gamma} 
\\
\frac{1}{2 \pi i}  \int_{(\varepsilon)} \frac{G(s)}{s} \left(\frac{hk}{\pi^2 r^2 x(x+1)}\right)^{s}  \int_{-\infty}^{\infty} x^{it}(1+x)^{-it} g(s,t)   w(t) dt ds dx.
\end{multline}
Similarly,
\begin{multline}
K^- = r^{-\alpha - \gamma}
\int_{0}^{\infty} (x+1)^{ -\half- \alpha} x^{-\half -\gamma} 
\\
\frac{1}{2 \pi i}  \int_{(\varepsilon)} \frac{G(s)}{s} \left(\frac{hk}{\pi^2 r^2 x(x+1)}\right)^{s}  \int_{-\infty}^{\infty} x^{-it}(1+x)^{it} g(s,t)   w(t) dt ds dx.
\end{multline}
Thus we obtain
\begin{multline}
I^{\pm} = \frac{\zeta(1- \alpha + \beta) \zeta(1 - \gamma + \delta)}{h^{\half - \alpha} k^{\half - \gamma}} 
\sum_{r =1}^{\infty}
\sum_{l=1}^{\infty}  \frac{c_l(r) (h,l)^{1 - \alpha + \beta} (k,l)^{1 - \gamma + \delta}}{l^{2-\alpha + \beta - \gamma + \delta} r^{\alpha + \gamma}}
\\
\int_0^{\infty} (x+1)^{ -\half- \alpha} x^{-\half -\gamma} 
\int_{-\infty}^{\infty} x^{\pm it} (1+x)^{\mp it}
\frac{1}{2 \pi i}  \int_{(\varepsilon)} \frac{G(s)}{s}  g(s,t)   \left(\frac{hk}{\pi^2 r^2 x(1+x)}\right)^{s} ds dt dx.
\end{multline}
Using 3.194.3 of \cite{GR} and the well-known representation of the beta function in terms of gamma functions, we have
\begin{align}
\int_0^{\infty} (1+x)^{ -\half- \alpha -s \mp it} x^{-\half -\gamma -s \pm it} dx &= B(\thalf - \alpha - s \pm it, \alpha + \gamma +2s) 
\\
& = \frac{\Gamma(\half - \alpha -s \pm it) \Gamma(\alpha + \gamma + 2s)}{\Gamma(\half + \gamma + s \pm it)}.
\end{align}
Thus rearranging the orders of integration gives
\begin{multline}
I^{\pm} = \frac{\zeta(1- \alpha + \beta) \zeta(1 - \gamma + \delta)}{h^{\half - \alpha} k^{\half - \gamma}} 
\sum_{r =1}^{\infty}
\sum_{l=1}^{\infty}  \frac{c_l(r) (h,l)^{1 - \alpha + \beta} (k,l)^{1 - \gamma + \delta}}{l^{2-\alpha + \beta - \gamma + \delta} r^{\alpha + \gamma}}
\\ 
\int_{-\infty}^{\infty} w(t)
\frac{1}{2 \pi i}   \int_{(\varepsilon)}  \frac{G(s)}{s}  g(s,t)   \left(\frac{hk}{\pi^2 r^2}\right)^{s} \frac{\Gamma(\half - \alpha -s \pm it) \Gamma(\alpha + \gamma + 2s)}{\Gamma(\half + \gamma + s \pm it)} ds dt.
\end{multline}
By Stirling's approximation,
\begin{equation}
\frac{\Gamma(\half - \alpha -s \pm it)}{\Gamma(\half + \gamma + s \pm it)} = t^{-\alpha-\gamma-2s} \exp\left(\frac{\pi i}{2} \text{sgn}(t) (-\alpha-\gamma-2s)\right) \left(1 + O\left(\frac{1 + |s|^2}{t}\right) \right).
\end{equation}

We conclude that
\begin{multline}
\label{eq:eart}
I^{(1)}_{\alpha,\beta,\gamma,\delta} = \frac{\zeta(1- \alpha + \beta) \zeta(1 - \gamma + \delta)}{h^{\half - \alpha} k^{\half - \gamma}} 
\sum_{r =1}^{\infty}
\sum_{l=1}^{\infty}  \frac{c_l(r) (h,l)^{1 - \alpha + \beta} (k,l)^{1 - \gamma + \delta}}{l^{2-\alpha + \beta - \gamma + \delta} r^{\alpha + \gamma}} \int_{-\infty}^{\infty} w(t)
\\
\frac{1}{2 \pi i}  \int_{(\varepsilon)} \frac{G(s)}{s}  g(s,t)   \left(\frac{hk}{\pi^2 r^2}\right)^{s} 
\Gamma(\alpha + \gamma + 2s) t^{-\alpha - \gamma - 2s} 2 \cos({\textstyle \frac{\pi }{2}}(\alpha + \gamma + 2s))\left(1 + O\left(\frac{1 + |s|^2}{t}\right)\right).
\end{multline}
At this point we shall work with the arithmetical sum over $l$ and $r$.  We first move the $s$-line of integration to $1$ so that the sums converge absolutely.

\subsection{An arithmetical sum}
Let
\begin{equation}
F(a,b,c) = \sum_{r = 1}^{\infty}
\sum_{l=1}^{\infty}  \frac{c_l(r) (h,l)^{a} (k,l)^{b}}{l^{a+b} r^{c}}.
\end{equation}
The desired sum in \eqref{eq:eart} is $F(1-\alpha + \beta, 1-\gamma + \delta, \alpha + \gamma + 2s)$.  We have
\begin{mylemma}  Suppose $\text{Re}(a+b) > 1$ and $\text{Re}(c) > 0$.  Then
\begin{multline}
F(a,b,c+1) = 
\frac{\zeta(1+c) \zeta(a+b+c)}{\zeta(a+b)} 
\\
\prod_{p^{h_p} || h} \left(\frac{(1 - p^{-b})(1- p^{-a-b-c}) + p^{-b}(1-p^{-a})(1-p^{-c})p^{h_p(-b-c)}}{(1 - p^{-b- c })(1 - p^{-a-b})} \right)
\\
\prod_{p^{k_p} || k} \left(\frac{(1 - p^{-a})(1- p^{-a-b-c}) + p^{-a}(1-p^{-b})(1-p^{-c})p^{k_p(-a-c)}}{(1 - p^{-a- c })(1 - p^{-a-b})} \right).
\end{multline}
\end{mylemma}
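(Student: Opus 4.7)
The plan is to evaluate the inner sum over $r$ first. Writing $c_l(r) = \sum_{d \mid (l,r)} d\,\mu(l/d)$ and interchanging the order of summation gives
\[
\sum_{r \geq 1} \frac{c_l(r)}{r^{c+1}} = \zeta(c+1)\, \psi_{c+1}(l), \qquad \psi_{c+1}(l) := \sum_{d \mid l} \mu(l/d)\, d^{-c}.
\]
The arithmetic function $\psi_{c+1}$ is multiplicative, with $\psi_{c+1}(1) = 1$ and $\psi_{c+1}(p^j) = p^{-jc}(1-p^{c})$ for $j \geq 1$. Absolute convergence of the double sum can be secured by temporarily taking $\text{Re}(a+b)$ and $\text{Re}(c)$ sufficiently large, and the resulting identity extends by analytic continuation to the range stated in the lemma.

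Since $(h,k) = 1$, the function $l \mapsto (h,l)^a (k,l)^b \psi_{c+1}(l)/l^{a+b}$ is multiplicative in $l$, so
\[
F(a,b,c+1) = \zeta(c+1) \prod_p E_p,
\]
where $E_p$ denotes the local Euler factor. For $p \nmid hk$, summing two geometric series gives
\[
E_p = 1 + (1-p^c) \sum_{j \geq 1} p^{-j(a+b+c)} = \frac{1 - p^{-(a+b)}}{1 - p^{-(a+b+c)}},
\]
so $\prod_{p \nmid hk} E_p = \frac{\zeta(a+b+c)}{\zeta(a+b)} \prod_{p \mid hk} \frac{1 - p^{-(a+b+c)}}{1 - p^{-(a+b)}}$. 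The product over $p \mid hk$ here is a correction factor that I would absorb into the local pieces at primes dividing $h$ or $k$.

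For $p \mid h$ with $p^{h_p} \| h$ (so that $(k, p^j) = 1$ for all $j$), I would split the sum defining $E_p$ according to whether $1 \leq j \leq h_p$, where $(h,p^j)^a = p^{aj}$, or $j > h_p$, where $(h,p^j)^a = p^{ah_p}$; each piece is a geometric series in $p^{-(b+c)}$ respectively $p^{-(a+b+c)}$. Multiplying the resulting $E_p$ by the correction factor $(1 - p^{-(a+b+c)})/(1 - p^{-(a+b)})$ and simplifying by an elementary algebraic identity (essentially clearing the common factor $p^{-(b+c)}(1-p^{-b})$ that arises in the cross-product) yields exactly
\[
\frac{(1 - p^{-b})(1- p^{-a-b-c}) + p^{-b}(1-p^{-a})(1-p^{-c})\, p^{-h_p(b+c)}}{(1 - p^{-b- c })(1 - p^{-a-b})}.
\]
The primes dividing $k$ are handled identically under the symmetry $a \leftrightarrow b$, $h \leftrightarrow k$, producing the second product in the statement.

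The only obstacle is the algebraic bookkeeping in the last step, when the two geometric pieces at $p \mid h$ are combined with the correction factor; it is straightforward but slightly tedious, and one must track the cancellation that brings the somewhat opaque sums into the compact symmetric form stated. No analytic ingredient beyond the classical Ramanujan sum identity and Euler factorization is required.
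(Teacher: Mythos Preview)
Your proposal is correct and follows essentially the same route as the paper: expand the Ramanujan sum via $c_l(r)=\sum_{d\mid(l,r)} d\,\mu(l/d)$, sum over $r$ to extract $\zeta(1+c)$ times a multiplicative function of $l$, pass to the Euler product, and at primes $p\mid h$ split the sum over $j$ at $j=h_p$ into two geometric series before simplifying (with the $k$-primes handled by the $a\leftrightarrow b$ symmetry). The only cosmetic difference is that you separate out a ``correction factor'' $(1-p^{-(a+b+c)})/(1-p^{-(a+b)})$ from the $p\nmid hk$ contribution, whereas the paper computes the local factor of $F/\zeta(1+c)$ at $p\mid h$ directly; these amount to the same algebra.
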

A brief computation gives
\begin{mycoro}
\label{coro:arithmeticfactor}
\begin{equation}
\sum_{r =1}^{\infty}
\sum_{l=1}^{\infty}  \frac{c_l(r) (h,l)^{1 - \alpha + \beta} (k,l)^{1 - \gamma + \delta}}{l^{2-\alpha + \beta - \gamma + \delta} r^{\alpha + \gamma + 2s}} = \frac{\zeta(\alpha + \gamma +2s) \zeta(1 + \beta + \delta + 2s)}{\zeta(2-\alpha + \beta - \gamma + \delta)} C_{\alpha,\beta,\gamma,\delta,h,k}(s),
\end{equation}
where
\begin{equation}
C_{\alpha,\beta,\gamma,\delta,h,k}(s) = C_{\alpha,\beta,\gamma,\delta,h}(s) C_{\gamma,\delta, \alpha,\beta,k}(s),
\end{equation}
and
\begin{equation}
\label{eq:Calt}
C_{\alpha,\beta,\gamma,\delta, h}(s) = \prod_{p | h} (1 - p^{-2 + \alpha - \beta + \gamma - \delta})^{-1} \prod_{p^{h_p} || h} 
\left(\frac{C^{(0)}(s) - p^{-1} C^{(1)}(s) + p^{-2} C^{(2)}(s)}{1 - p^{- \alpha  - \delta    -2s }} \right),
\end{equation}
and where
\begin{align}
C^{(0)}(s) & = 1 - p^{(-\alpha - \delta - 2s)(1 + h_p)}, \\
C^{(1)}(s) & = (p^{\gamma - \delta} + p^{\alpha - \beta} p^{-\alpha - \delta - 2s}) (1 - p^{(-\alpha - \delta - 2s)h_p}), \\
C^{(2)}(s) & = p^{\alpha - \beta + \gamma - \delta}(p^{-\alpha - \delta - 2s} - p^{(-\alpha - \delta - 2s)h_p}).
\end{align}
Here we write $C^{(i)}(s)$ as shorthand for $C_{\alpha,\beta,\gamma,\delta,h}^{(i)}(s)$.
\end{mycoro}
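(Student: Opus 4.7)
The plan is to deduce the corollary directly from the preceding lemma by specializing its parameters and then matching the resulting local Euler factors at primes dividing $h$ (and, by symmetry, $k$) to the closed form in \eqref{eq:Calt}. First I set $a = 1-\alpha+\beta$, $b = 1-\gamma+\delta$, and $c+1 = \alpha+\gamma+2s$, so that the arithmetical sum in the statement is precisely $F(a,b,c+1)$ in the notation of the lemma. Under these substitutions, the three zeta values $\zeta(1+c)$, $\zeta(a+b+c)$, and $\zeta(a+b)$ become $\zeta(\alpha+\gamma+2s)$, $\zeta(1+\beta+\delta+2s)$, and $\zeta(2-\alpha+\beta-\gamma+\delta)$ respectively, which recovers the global factor in front of $C_{\alpha,\beta,\gamma,\delta,h,k}(s)$.

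Next, because $(h,k)=1$ the Euler product in the lemma splits into an independent product over $p \mid h$ and a product over $p \mid k$, and the $p \mid k$ factor is obtained from the $p \mid h$ factor by the symmetry $(a,b)\leftrightarrow(b,a)$ together with $h_p \mapsto k_p$. Under the specialization above this becomes $(\alpha,\beta)\leftrightarrow(\gamma,\delta)$ with $h$ replaced by $k$, which explains the factorization $C_{\alpha,\beta,\gamma,\delta,h,k}(s) = C_{\alpha,\beta,\gamma,\delta,h}(s)\,C_{\gamma,\delta,\alpha,\beta,k}(s)$ and reduces everything to verifying the explicit formula for $C_{\alpha,\beta,\gamma,\delta,h}(s)$.

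Now set $u = -b-c = -\alpha-\delta-2s$. The denominator appearing in the lemma's factor at $p \mid h$ is then $(1-p^{-b-c})(1-p^{-a-b}) = (1-p^{-\alpha-\delta-2s})(1-p^{-2+\alpha-\beta+\gamma-\delta})$, and pulling out the second factor accounts precisely for the outer product $\prod_{p \mid h}(1-p^{-2+\alpha-\beta+\gamma-\delta})^{-1}$ in \eqref{eq:Calt}; the factor $(1-p^{-\alpha-\delta-2s})$ remains in the denominator as displayed. What remains is the identity between the numerator $(1-p^{-b})(1-p^{-a-b-c}) + p^{-b}(1-p^{-a})(1-p^{-c})p^{-h_p(b+c)}$ coming from the lemma and the three-term expression $C^{(0)}(s) - p^{-1}C^{(1)}(s) + p^{-2}C^{(2)}(s)$.

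The only real obstacle is arithmetic bookkeeping. Expanding the two products yields eight monomials whose exponents are integer shifts of combinations of $\alpha-\beta$, $\gamma-\delta$, $u$, and $h_p u$. Grouping them by the net power of $p^{-1}$ appearing in each: the two monomials free of any $p^{-1}$ assemble into $1 - p^{u(1+h_p)} = C^{(0)}(s)$; the four monomials carrying a single $p^{-1}$ combine to $-p^{-1}(p^{\gamma-\delta}+p^{\alpha-\beta+u})(1-p^{uh_p}) = -p^{-1}C^{(1)}(s)$; and the two monomials with $p^{-2}$ assemble into $p^{-2+\alpha-\beta+\gamma-\delta}(p^u - p^{uh_p}) = p^{-2}C^{(2)}(s)$. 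Matching these eight terms one-by-one completes the verification, and no combinatorial identities beyond straightforward arithmetic are involved.
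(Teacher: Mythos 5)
Your proposal is correct and follows exactly the route the paper intends: the paper derives the Corollary from the Lemma by "a brief computation," and your specialization $a=1-\alpha+\beta$, $b=1-\gamma+\delta$, $c+1=\alpha+\gamma+2s$, followed by matching the zeta factors and the eight monomials of the local factor at each $p\mid h$ (and the symmetric factor at $p\mid k$), is precisely that computation. No gaps; the grouping into $C^{(0)}-p^{-1}C^{(1)}+p^{-2}C^{(2)}$ checks out.
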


\begin{proof}
Using
\begin{equation}
c_l(r) = \sum_{d |(l,r)} d \mu(l/d)
\end{equation}
gives
\begin{align}
F(a,b,1+c) &= \sum_{r = 1}^{\infty}
\sum_{l=1}^{\infty}  \frac{ (h,l)^{a} (k,l)^{b}}{l^{a+b} r^{1+c}} \sum_{d |(l,r)} d \mu(l/d) 
\\
&= \zeta(1+c)
\sum_{l=1}^{\infty}  \frac{ (h,l)^{a} (k,l)^{b}}{l^{a+b+c}} \sum_{d |l} d^c \mu(d),
\end{align}
by applying the change of variables $r \rightarrow dr$, summing over $r$ and applying the change of variables $d \rightarrow l/d$.  Now we express $F$ in terms of its Euler product; we shall focus on each prime separately.  Precisely, we study the Euler product of $F(a,b,1+c)/\zeta(1+c)$.  Note that if $p \nmid hk$ then the local factor is
\begin{align}
1 + (1 -p^{ c}) \sum_{j=1}^{\infty} \frac{ 1}{(p^j)^{a+b+c}} 
& =
1 + (1 -p^{c}) \frac{p^{-a-b-c}}{1- p^{-a-b-c}}
\\
= \frac{1- p^{-a-b-c} + (1 -p^{ c})p^{-a-b-c}}{1- p^{-a-b-c}}
& = \frac{1- p^{-a-b}}{1- p^{-a-b-c}}.
\end{align}
Similarly, if $p^{h_p} || h$, then we have that the local factor of $F(a,b,1+c)/\zeta(1+c)$ is
\begin{equation}
1 + (1 -p^{ c}) \sum_{j=1}^{\infty} \frac{ (p^{h_p},p^j)^{a}}{(p^j)^{a+b+c}}.
\end{equation}
Note
\begin{align}
\sum_{j=1}^{\infty} \frac{ (p^{h_p},p^j)^{a}}{(p^j)^{a+b+c}} = \sum_{j=1}^{h_p} \frac{ p^{ja}}{(p^j)^{a+b+c}} + \sum_{j=h_p + 1}^{\infty} \frac{ p^{a h_p}}{(p^j)^{a+b+c}}
\\
= p^{-b-c} \frac{1 - p^{h_p(-b-c)}}{1 - p^{-b- c}} +  \frac{p^{-a -b-c } p^{h_p(-b-c)}}{1 - p^{-a-b-c }}
\\
= p^{-b-c} \frac{(1 - p^{h_p(-b-c )})(1 - p^{-a-b-c}) + p^{-a} p^{h_p(-b-c)}(1 - p^{-b- c})
}{(1 - p^{-b- c })(1 - p^{-a-b-c})}
\\
= p^{-b-c} \frac{1-p^{-a-b-c} - p^{h_p(-b-c)} + p^{-a}p^{h_p(-b-c)}}{(1 - p^{-b- c})(1 - p^{-a-b-c})}.
\end{align}
Thus the local factor is
\begin{align}
\frac{(1 - p^{-b- c })(1 - p^{-a-b-c }) +  p^{-b-c}(1-p^{c })(1-p^{-a-b-c} - p^{h_p(-b-c)} + p^{-a}p^{h_p(-b-c)})}{(1 - p^{-b- c})(1 - p^{-a-b-c})}
\\
=
\frac{1 - p^{-a-b-c} + p^{-b-c}[-1 + p^{-a-b-c} + (1-p^{c})(1-p^{-a-b-c} - p^{h_p(-b-c)} + p^{-a}p^{h_p(-b-c)})]}{(1 - p^{-b- c})(1 - p^{-a-b-c})}
\\
= \frac{1 - p^{-a-b-c } + p^{-b-c}[p^{-a-b} - p^{ c} + (1-p^c)p^{h_p(-b-c)}(-1 + p^{-a})]}{(1 - p^{-b- c })(1 - p^{-a-b-c})}
\\
= \frac{1 - p^{-b}(1+ p^{-a-c} - p^{-a-b-c}) + p^{-b}(1-p^{-a})(1-p^{-c})p^{h_p(-b-c)}}{(1 - p^{-b- c })(1 - p^{-a-b-c})}
\\
= \frac{(1 - p^{-b})(1- p^{-a-b-c}) + p^{-b}(1-p^{-a})(1-p^{-c})p^{h_p(-b-c)}}{(1 - p^{-b- c })(1 - p^{-a-b-c})}. 
\quad
\qedhere
\end{align}
\end{proof}
Applying Corollary \ref{coro:arithmeticfactor} to \eqref{eq:eart}, we obtain
\begin{multline}
I^{(1)}_{\alpha,\beta,\gamma,\delta} = \frac{\zeta(1- \alpha + \beta) \zeta(1 - \gamma + \delta)}{h^{\half - \alpha} k^{\half - \gamma}} 
\int_{-\infty}^{\infty} w(t) 
\\
\frac{1}{2 \pi i}  \int_{(1)}
\frac{\zeta(\alpha + \gamma +2s) \zeta(1 + \beta + \delta + 2s)}{\zeta(2-\alpha + \beta - \gamma + \delta)} 
C_{\alpha,\beta,\gamma,\delta,h,k}(s)
 \frac{G(s)}{s}  g(s,t)   \left(\frac{hk}{\pi^2}\right)^{s} 
\\
\Gamma(\alpha + \gamma + 2s) t^{-\alpha - \gamma - 2s} 2 \cos({\textstyle \frac{\pi }{2}}(\alpha + \gamma + 2s))\left(1 + O\left(\frac{1 + |s|^2}{t}\right)\right).
\end{multline}
Moving the line of integration back to $\varepsilon$ gives
\begin{multline}
I^{(1)}_{\alpha,\beta,\gamma,\delta} = \frac{\zeta(1- \alpha + \beta) \zeta(1 - \gamma + \delta)}{\zeta(2-\alpha + \beta - \gamma + \delta)}
\left[
\frac{1}{\sqrt{hk}} 
\int_{-\infty}^{\infty} t^{-\alpha - \gamma} w(t) 
\right.
\\
\frac{1}{2 \pi i}  \int_{(\varepsilon)}
\zeta(\alpha + \gamma +2s) \zeta(1 + \beta + \delta + 2s)
h^{\alpha} k^{\gamma} C_{\alpha,\beta,\gamma,\delta,h,k}(s)
 \frac{G(s)}{s}  g(s,t)   \left(\frac{hk}{\pi^2}\right)^{s} 
\\
\left. \Gamma(\alpha + \gamma + 2s) t^{- 2s} 2 \cos({\textstyle \frac{\pi }{2}}(\alpha + \gamma + 2s))
+ O\left(\frac{1}{\sqrt{hk}} (hkT)^{\varepsilon} \right)
\right].
\end{multline}
Applying the functional equation for the Riemann zeta function gives that
\begin{equation}
\label{eq:zetaFE}
\pi^{-2s} \zeta(\alpha + \gamma +2s) \Gamma(\alpha + \gamma + 2s)  2 \cos({\textstyle \frac{\pi}{2}(\alpha + \gamma + 2s)})
= \pi^{\alpha + \gamma} 2^{\alpha + \gamma + 2s} \zeta(1-\alpha-\gamma-2s).
\end{equation}
Using \eqref{eq:zetaFE} and Stirling's approximation for $g(s,t)$ gives
\begin{multline}
I^{(1)}_{\alpha,\beta,\gamma,\delta} =  \frac{\zeta(1- \alpha + \beta) \zeta(1 - \gamma + \delta)}{\zeta(2-\alpha + \beta - \gamma + \delta)} \left[ \frac{1}{\sqrt{hk}} 
\int_{-\infty}^{\infty} w(t) \left(\frac{t}{2 \pi}\right)^{-\alpha - \gamma} \right.
\\
\frac{1}{2 \pi i}  \int_{(\varepsilon)} \frac{G(s)}{s}  
\zeta(1-\alpha  -\gamma -2s) \zeta(1 + \beta + \delta + 2s) h^{\alpha} k^{\gamma} (hk)^s C_{\alpha,\beta,\gamma,\delta,h,k}(s)
ds dt
\\
\left. + O\left(\frac{1}{\sqrt{hk}} (hkT)^{\varepsilon} \right) \right],
\end{multline}
as claimed at the beginning of Section \ref{section:mainterms}.  Grouping the error terms together gives
\begin{myprop}
\label{prop:M}
We have
\begin{equation}
I_{O}^{(1)}(h,k) = M^{(1)}_{\alpha,\beta,\gamma,\delta} + M^{(1)}_{\beta, \alpha,\gamma,\delta} + M^{(1)}_{\alpha,\beta, \delta \gamma} + M^{(1)}_{\beta, \alpha,\delta, \gamma} + O(T^{\frac34 + \varepsilon} (hk)^{\frac78} (T/T_0)^{\frac94}),
\end{equation}
where
\begin{multline}
M^{(1)}_{\alpha,\beta,\gamma,\delta} =  \frac{\zeta(1- \alpha + \beta) \zeta(1 - \gamma + \delta)}{\zeta(2-\alpha + \beta - \gamma + \delta)}  \frac{1}{\sqrt{hk}} 
\int_{-\infty}^{\infty} w(t) \left(\frac{t}{2 \pi}\right)^{-\alpha - \gamma}
\\
\frac{1}{2 \pi i}  \int_{(\varepsilon)} \frac{G(s)}{s}  
\zeta(1-\alpha  -\gamma -2s) \zeta(1 + \beta + \delta + 2s) h^{\alpha} k^{\gamma} (hk)^s C_{\alpha,\beta,\gamma,\delta,h,k}(s)
ds dt.
\end{multline}
The estimate holds uniformly in terms of the shift parameters.
\end{myprop}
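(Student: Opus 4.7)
The plan is to prove the proposition by starting from Proposition \ref{prop:deltaconclusion}, which already yields the decomposition
\[
I_O^{(1)}(h,k) = I^{(1)}_{\alpha,\beta,\gamma,\delta} + I^{(1)}_{\beta,\alpha,\gamma,\delta} + I^{(1)}_{\alpha,\beta,\delta,\gamma} + I^{(1)}_{\beta,\alpha,\delta,\gamma} + O(T^{3/4+\varepsilon}(hk)^{7/8}(T/T_0)^{9/4}),
\]
and computing $I^{(1)}_{\alpha,\beta,\gamma,\delta}$ explicitly from its definition in \eqref{eq:I}. The other three terms then follow by the indicated shift permutations. First I would sum over the dyadic partition parameters $M, N$ using $\sum_M W_0(x/M) = 1$ to collapse the partition back into a single integral against $W(x/hM)W(y/kN)$, replaced by $x^{-1/2-\alpha}(x-r)^{-1/2-\gamma}$ arising from the $W(x) = x^{-1/2} W_0(x)$ factorization.

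Next I would split the $r$-sum into its positive part $I^+$ and negative part $I^-$, and in each perform the change of variables $x \mapsto rx + r$ (for $I^+$) or $x \mapsto rx$ (for $I^-$), which factors out $r^{-\alpha-\gamma}$ from the $x$-integral and isolates an inner integral of the form
\[
\int_0^{\infty} (1+x)^{-1/2-\alpha-s \mp it} x^{-1/2-\gamma-s \pm it}\, dx.
\]
By formula 3.194.3 of \cite{GR}, this evaluates to a beta function $B(\tfrac12-\alpha-s\pm it,\, \alpha+\gamma+2s)$, which is a quotient of three gamma functions. I would then exchange the orders of the $x$-, $s$-, and $t$-integrations (using absolute convergence on the line $\text{Re}(s)=1$) and apply Stirling's approximation to the ratio $\Gamma(\tfrac12-\alpha-s\pm it)/\Gamma(\tfrac12+\gamma+s\pm it)$ to produce the factor $t^{-\alpha-\gamma-2s}\exp(\tfrac{\pi i}{2}\,\text{sgn}(t)(-\alpha-\gamma-2s))$, with an error $O((1+|s|^2)/t)$. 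The two signs, arising from the choice of $I^+$ versus $I^-$, combine into $2\cos(\tfrac{\pi}{2}(\alpha+\gamma+2s))$.

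At this point the arithmetical double sum $\sum_r \sum_l c_l(r)(h,l)^{1-\alpha+\beta}(k,l)^{1-\gamma+\delta}/(l^{2-\alpha+\beta-\gamma+\delta}r^{\alpha+\gamma+2s})$ is exactly the one evaluated in Corollary \ref{coro:arithmeticfactor}, giving
\[
\frac{\zeta(\alpha+\gamma+2s)\zeta(1+\beta+\delta+2s)}{\zeta(2-\alpha+\beta-\gamma+\delta)} C_{\alpha,\beta,\gamma,\delta,h,k}(s).
\]
Finally I would apply the functional equation \eqref{eq:zetaFE} to convert $\pi^{-2s}\zeta(\alpha+\gamma+2s)\Gamma(\alpha+\gamma+2s)\cdot 2\cos(\tfrac{\pi}{2}(\alpha+\gamma+2s))$ into $\pi^{\alpha+\gamma} 2^{\alpha+\gamma+2s}\zeta(1-\alpha-\gamma-2s)$, and use Stirling's asymptotic \eqref{eq:gapprox} to replace the $t^{-\alpha-\gamma-2s}$ factor with $(t/2\pi)^{-\alpha-\gamma}(t/2)^{-2s}$ times the leading approximation of $g(s,t)$, absorbing the cross term into the $g(s,t)$ factor itself. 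Moving the $s$-line back to $\text{Re}(s) = \varepsilon$ (no poles are crossed since $G$ suppresses the relevant singularities and the shifts are $\ll (\log T)^{-1}$) yields precisely $M^{(1)}_{\alpha,\beta,\gamma,\delta}$. The cumulative Stirling errors contribute only $O((hk)^{-1/2}(hkT)^{\varepsilon})$, which is dominated by the error already present from Proposition \ref{prop:deltaconclusion}.

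The main obstacle I anticipate is controlling the uniformity of the error terms in the shift parameters $\alpha,\beta,\gamma,\delta$, since the individual term $N_{\alpha,\beta,\gamma,\delta}$ carries spurious poles on the hyperplanes $\alpha=\beta$ and $\gamma=\delta$ that only cancel upon summing the four symmetric permutations. One must therefore verify that the manipulations above (in particular the rearrangement of sums and the moving of contours) can be carried out on a generic slice of shift space and then extended by holomorphy of the combined main term, ensuring that the claimed error bound genuinely holds uniformly for all small shifts as asserted.
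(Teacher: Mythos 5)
Your proposal is correct and follows essentially the same route as the paper: starting from Proposition \ref{prop:deltaconclusion}, collapsing the dyadic partition, splitting into $r>0$ and $r<0$, evaluating the $x$-integral as a beta function via 3.194.3 of \cite{GR}, applying Stirling to the gamma ratio so the two signs combine into $2\cos(\tfrac{\pi}{2}(\alpha+\gamma+2s))$, evaluating the arithmetical sum by Corollary \ref{coro:arithmeticfactor} on $\mathrm{Re}(s)=1$, and then using the functional equation \eqref{eq:zetaFE} together with \eqref{eq:gapprox} before returning the contour to $\mathrm{Re}(s)=\varepsilon$ (where the prescribed zeros of $G$ cancel the pole of $\zeta(\alpha+\gamma+2s)$). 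Your closing remarks on uniformity in the shifts, recovered after summing the four permutations by holomorphy of the combined main term, also match the paper's treatment.
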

The uniformity follows because the sum of the $M$'s is holomorphic in terms of the shift parameters, and thus the error term must have the same property.

\subsection{A functional equation for $C(s)$}
It is a remarkable fact that $C_{\alpha,\beta,\gamma,\delta,h}(s)$ satisfies a functional equation relating $s$ and $-s$.
\begin{mytheo}
\label{thm:CFE}
We have
\begin{equation}
h^{-s+\alpha} C_{\alpha,\beta,\gamma,\delta,h}(-s) = h^{s-\delta} C_{-\delta,-\gamma,-\beta,-\alpha,h}(s).
\end{equation}
\end{mytheo}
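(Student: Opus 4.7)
The identity factors over primes: both $h^{-s+\alpha}$, $h^{s-\delta}$, and the Euler product $C_{\alpha,\beta,\gamma,\delta,h}(s)$ split into factors indexed by $p \mid h$, so the statement reduces to a local identity at each prime. Moreover, the outer factor $(1-p^{-2+\alpha-\beta+\gamma-\delta})^{-1}$ in \eqref{eq:Calt} is independent of $s$ and, since the combination $\alpha-\beta+\gamma-\delta$ is invariant under $(\alpha,\beta,\gamma,\delta)\mapsto(-\delta,-\gamma,-\beta,-\alpha)$, it is also invariant under the parameter swap. This factor therefore cancels, and it suffices to prove the bracketed quotient in \eqref{eq:Calt} satisfies the claimed $s \leftrightarrow -s$ symmetry, weighted by $p^{h_p(-s+\alpha)}$ versus $p^{h_p(s-\delta)}$.

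To carry out the local computation, I would introduce abbreviations $v = p^{-\alpha-\delta+2s}$, $Q = p^{\gamma-\delta}$, $R = p^{\alpha-\beta}$. Under the map $(\alpha,\beta,\gamma,\delta)\mapsto(-\delta,-\gamma,-\beta,-\alpha)$, one computes directly that $Q \leftrightarrow R$, while the quantity playing the role of $v$ for the transformed parameters is $p^{\alpha+\delta-2s} = 1/v$. Thus the LHS local factor (at $-s$, original parameters) has numerator $1 - v^{1+h_p} - p^{-1}(Q+Rv)(1-v^{h_p}) + p^{-2}QR(v-v^{h_p})$ over denominator $1-v$, while the RHS local factor (at $s$, transformed parameters), after clearing $1/v$ by multiplying numerator and denominator by $v$, takes the form
\[
\frac{v - v^{-h_p} - p^{-1}(Rv+Q)(1-v^{-h_p}) + p^{-2}QR(1-v^{1-h_p})}{v-1}.
\]

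The crucial bookkeeping observation is that $p^{h_p(s-\delta)}/p^{h_p(-s+\alpha)} = p^{h_p(2s-\alpha-\delta)} = v^{h_p}$, so after also absorbing the sign from $(1-v)/(v-1) = -1$, the functional equation reduces to a clean polynomial identity: the LHS numerator should equal $-v^{h_p}$ times the RHS numerator. Distributing $v^{h_p}$ through the RHS numerator term by term, using $v^{h_p}\cdot v = v^{1+h_p}$, $v^{h_p}\cdot v^{-h_p} = 1$, $v^{h_p}\cdot(1-v^{-h_p}) = v^{h_p}-1 = -(1-v^{h_p})$, and $v^{h_p}(1-v^{1-h_p}) = v^{h_p} - v$, the four terms match the LHS term by term, with the swap $Q + Rv \leftrightarrow Rv + Q$ trivially consistent. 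The main obstacle is not depth but bookkeeping: one has to correctly identify the single variable $v$ that controls the symmetry, track which signs and powers of $v$ arise from clearing denominators, and recognize that the precise exponents $-s+\alpha$ and $s-\delta$ appearing in the theorem statement are exactly those needed to produce the compensating factor $v^{h_p}$.
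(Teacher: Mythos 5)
Your proposal is correct and follows essentially the same route as the paper: reduce to a local identity at each prime of $h$, substitute $x=p^{-\alpha-\delta+2s}$ (your $v$), note that the parameter swap sends $x\mapsto 1/x$ and interchanges $p^{\gamma-\delta}$ with $p^{\alpha-\beta}$, and check an elementary rational-function identity with the compensating factor $x^{h_p}$ coming from $h^{-s+\alpha}$ versus $h^{s-\delta}$. The only cosmetic difference is that the paper verifies the three components $C^{(0)},C^{(1)},C^{(2)}$ separately, whereas you verify the combined numerator in one step.
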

We instantly deduce
\begin{mycoro}
\label{coro:CFE}
We have
\begin{equation}
(hk)^{-s} h^{\alpha} k^{\gamma} C_{\alpha,\beta,\gamma,\delta,h,k}(-s) = (hk)^{s} h^{-\delta} k^{-\beta} C_{-\delta,-\gamma,-\beta,-\alpha,h,k}(s)
\end{equation}
\end{mycoro}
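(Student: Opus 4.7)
The plan is to deduce Corollary~\ref{coro:CFE} from Theorem~\ref{thm:CFE} by factoring $C_{\alpha,\beta,\gamma,\delta,h,k}(s)$ into its $h$- and $k$-parts and applying the theorem to each.  By the definition in Corollary~\ref{coro:arithmeticfactor},
\[
C_{\alpha,\beta,\gamma,\delta,h,k}(s) \;=\; C_{\alpha,\beta,\gamma,\delta,h}(s)\; C_{\gamma,\delta,\alpha,\beta,k}(s),
\]
so the left-hand side of the corollary factors as
\[
\bigl[h^{-s+\alpha}\, C_{\alpha,\beta,\gamma,\delta,h}(-s)\bigr] \cdot \bigl[k^{-s+\gamma}\, C_{\gamma,\delta,\alpha,\beta,k}(-s)\bigr].
\]
Theorem~\ref{thm:CFE} applied directly to the first bracket produces $h^{s-\delta}\, C_{-\delta,-\gamma,-\beta,-\alpha,h}(s)$.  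For the second bracket I would invoke the theorem after relabeling $(\alpha,\beta,\gamma,\delta) \mapsto (\gamma,\delta,\alpha,\beta)$ (and with $k$ replacing $h$), which rewrites it as $k^{s-\beta}\, C_{-\beta,-\alpha,-\delta,-\gamma,k}(s)$.  Multiplying the two and re-applying the factorization from Corollary~\ref{coro:arithmeticfactor} in the form $C_{-\delta,-\gamma,-\beta,-\alpha,h,k}(s) = C_{-\delta,-\gamma,-\beta,-\alpha,h}(s)\, C_{-\beta,-\alpha,-\delta,-\gamma,k}(s)$ yields exactly the right-hand side $(hk)^s\, h^{-\delta} k^{-\beta}\, C_{-\delta,-\gamma,-\beta,-\alpha,h,k}(s)$ of the corollary.

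The real content therefore lies in Theorem~\ref{thm:CFE}, which I would prove by a local argument.  Since $C_{\alpha,\beta,\gamma,\delta,h}(s)$ is defined as an Euler product over $p \mid h$ and the weights $h^{\pm s}$ factor as $\prod_{p \mid h} p^{\pm h_p s}$, it suffices to establish the theorem prime by prime.  The symmetric prefactor $(1 - p^{-2+\alpha-\beta+\gamma-\delta})^{-1}$ is invariant under $(\alpha,\beta,\gamma,\delta) \mapsto (-\delta,-\gamma,-\beta,-\alpha)$ (since $\alpha-\beta+\gamma-\delta$ is preserved) and cancels immediately from both sides.  Introducing the shorthand $x = p^{\alpha-\beta}$, $y = p^{\gamma-\delta}$, $z = p^{-\alpha-\delta+2s}$, and $n = h_p$, one checks that the combined symmetry $(\alpha,\beta,\gamma,\delta,s) \mapsto (-\delta,-\gamma,-\beta,-\alpha,-s)$ acts on these variables as $(x,y,z) \mapsto (y,x,z^{-1})$ with $xy$ preserved.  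The left-hand local factor of $C_{\alpha,\beta,\gamma,\delta,h}(-s)$ then reads
\[
F_L \;=\; \frac{(1 - z^{n+1}) - p^{-1}(y + xz)(1 - z^n) + p^{-2}xy(z - z^n)}{1 - z},
\]
and $F_R$ is the same expression with $x$ and $y$ swapped and $z$ replaced by $z^{-1}$.  Since $p^{h_p(2s - \alpha - \delta)} = z^n$, the local statement of the theorem collapses to the purely algebraic identity $F_L = z^n F_R$.

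This last identity is what I would verify by direct computation: multiply the numerator of $F_R$ by $z^n$, use $1 - z^{-1} = -z^{-1}(1-z)$ to convert the denominator to $1 - z$, and match the resulting monomials — of $z$-degrees $0$, $1$, $n$, and $n+1$ — against those of $F_L$ term by term.  The $x \leftrightarrow y$ swap is absorbed by the shifted powers of $z$ generated in the simplification.  The main obstacle is simply the bookkeeping at this final stage: the inversion $z \to z^{-1}$ produces a number of sign flips and shifted exponents, and tracking them requires care.  No deeper conceptual issue arises, and there does not appear to be a more structural route (such as a generating-function identity) that avoids writing out the three numerator pieces $C^{(0)}, C^{(1)}, C^{(2)}$ explicitly.
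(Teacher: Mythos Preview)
Your proposal is correct and follows essentially the same route as the paper: the corollary is deduced from Theorem~\ref{thm:CFE} by factoring $C_{\alpha,\beta,\gamma,\delta,h,k}$ into its $h$- and $k$-parts and applying the theorem to each, and the theorem itself is established by a prime-by-prime verification. The only cosmetic difference is that the paper checks the identity separately for each of $C^{(0)}$, $C^{(1)}$, $C^{(2)}$ (with the single substitution $x=p^{-\alpha-\delta+2s}$), whereas you bundle the three pieces into one polynomial identity $F_L = z^n F_R$ in the variables $x,y,z$; the computations are equivalent.
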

\begin{proof}
It suffices to check the formula at each prime dividing $h$.  
Note that the functional equation above is equivalent to $C_{\alpha,\beta,\gamma,\delta,h}(-s) = h^{-\alpha - \delta + 2s}C_{-\delta,-\gamma, -\beta,-\alpha}(s)$.  It suffices to show that
\begin{equation}
\frac{C_{\alpha,\beta,\gamma,\delta,h}^{(i)}(-s)}{1- p^{-\alpha-\delta + 2s}}  = p^{(-\alpha - \delta + 2s)h_p} \frac{C_{-\delta,-\gamma,-\beta,-\alpha,h}^{(i)}(s)}{1- p^{\alpha+\delta - 2s}},
\end{equation}
for $i=0,1,2$.  Letting $x = p^{-\alpha - \delta +2s}$, and $h= h_p$, the three desired identities are
\begin{align}
\frac{1-x^{1 + h}}{1-x} &= x^{h} \frac{1- \frac{1}{x^{1+h}}}{1-\frac{1}{x}}, \\
\frac{ p^{\gamma - \delta} + p^{\alpha - \beta}x}{1-x} (1- x^h) &= x^h \frac{p^{-\beta + \alpha} + p^{-\delta + \gamma} \frac{1}{x}}{1 - \frac{1}{x}} (1- \frac{1}{x^h}), \\
\frac{x-x^h}{1-x} & = x^h \frac{\frac{1}{x} - \frac{1}{x^h}}{1 - \frac{1}{x}},
\end{align}
each of which is easily checked by inspection.
\end{proof}

\subsection{Combining terms}
It turns out that $M_{\alpha,\beta,\gamma,\delta}^{(1)}$ is considerably simplified when it is added to a corresponding term from the other part of the approximate functional equation.  It is easy to see that an analog of Proposition \ref{prop:M} holds for $I_{O}^{(2)}$, where each main term has the same form as that for $I_{O}^{(1)}$ but with $\alpha$ switched with $-\gamma$ and $\beta$ switched with $-\delta$, and multiplied by $X_{\alpha,\beta,\gamma,\delta,t} \sim (t/2\pi)^{-\alpha - \beta -\gamma - \delta}$.  The term to sum with $M_{\alpha,\beta,\gamma,\delta}^{(1)}$ is the one obtained by so modifying $M_{\beta,\alpha,\delta,\gamma}^{(1)}$, which we denote $M_{\alpha,\beta,\gamma,\delta}^{(2)}$.  
Hence we have
\begin{myprop}
\label{prop:deltaconclusion2}
We have
\begin{equation}
I_O^{(2)}(h,k) = M_{\alpha,\beta,\gamma,\delta}^{(2)} + M_{\beta, \alpha, \gamma,\delta}^{(2)} + M_{\alpha,\beta ,\delta, \gamma}^{(2)} + M_{\beta, \alpha, \delta, \gamma}^{(2)} + 
O(T^{\frac34 + \varepsilon} (hk)^{\frac78} (T/T_0)^{\frac94}),
\end{equation}
where
\begin{multline}
M_{\alpha,\beta,\gamma,\delta}^{(2)} =  \frac{\zeta(1- \alpha + \beta) \zeta(1 - \gamma + \delta)}{\zeta(2-\alpha + \beta - \gamma + \delta)} \frac{1}{\sqrt{hk}} 
\int_{-\infty}^{\infty} w(t) \left(\frac{t}{2 \pi}\right)^{-\alpha - \gamma} 
\\
\frac{1}{2 \pi i}  \int_{(\varepsilon)} \frac{G(s)}{s}  
\zeta(1-\alpha  -\gamma +2s) \zeta(1 + \beta + \delta - 2s) h^{-\delta} k^{-\beta} (hk)^s C_{-\delta,-\gamma,-\beta,-\alpha,h,k}(s)
ds dt
\end{multline}
\end{myprop}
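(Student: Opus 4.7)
The plan is to mirror, for $I_O^{(2)}(h,k)$, the proof of Proposition \ref{prop:M} carried out in Sections \ref{section:cleaning}--\ref{section:mainterms} for $I_O^{(1)}(h,k)$. The starting observation, already noted after \eqref{eq:gapprox}, is that $I^{(2)}(h,k)$ has the same structural form as $I^{(1)}(h,k)$ once one makes the substitutions $\alpha \leftrightarrow -\gamma$ and $\beta \leftrightarrow -\delta$ in the divisor coefficients and in the $g$-function, and inserts the slowly varying factor $X_{\alpha,\beta,\gamma,\delta,t}$ into the $t$-integral. By Stirling, $X_{\alpha,\beta,\gamma,\delta,t} = (t/2\pi)^{-\alpha-\beta-\gamma-\delta}(1 + O(t^{-1}))$ with $j$-th $t$-derivatives of size $O(t^{-j})$ on the support of $w$, so adjoining $X$ to the test function is benign for every estimate in Sections \ref{section:cleaning} and \ref{section:delta}.

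With this observation, the dyadic partition, the integration by parts in $t$ that truncates $|\log(hm/kn)| \ll T_0^{-1+\varepsilon}$, the restriction of $r$ to $|r| \ll T_0^{-1}\sqrt{hkMN}\,T^{\varepsilon}$, and the application of the Duke--Friedlander--Iwaniec theorem all transpose line for line. The resulting analog of Proposition \ref{prop:deltaconclusion} takes the form
\begin{equation*}
I_O^{(2)}(h,k) = \widetilde{I}_1 + \widetilde{I}_2 + \widetilde{I}_3 + \widetilde{I}_4 + O\bigl(T^{\frac{3}{4}+\varepsilon}(hk)^{\frac{7}{8}}(T/T_0)^{\frac{9}{4}}\bigr),
\end{equation*}
where each $\widetilde{I}_j$ is the exact analog of \eqref{eq:I} built from $\sigma_{-\gamma,-\delta}$ and $\sigma_{-\alpha,-\beta}$ (producing four terms under the shifted Voronoi decomposition \eqref{eq:deltashift}), with the extra factor $X_{\alpha,\beta,\gamma,\delta,t}$ sitting inside the $t$-integrand.

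The third step is to apply the integral manipulations of Section \ref{section:mainterms} to each $\widetilde{I}_j$: the change of variables $x \to rx + r$, the beta-function identity from \cite{GR}, Stirling's approximation for $g$, the functional equation \eqref{eq:zetaFE}, and Corollary \ref{coro:arithmeticfactor} all carry over verbatim in the substituted shift parameters. The leading factor $X_{\alpha,\beta,\gamma,\delta,t} \sim (t/2\pi)^{-\alpha-\beta-\gamma-\delta}$ then multiplies the $(t/2\pi)^{-\alpha'-\gamma'}$ produced internally to give the $(t/2\pi)^{-\alpha-\gamma}$ appearing in the statement, and the composite Stirling remainders yield only a relative error $O(T^{\varepsilon-1})$ that is absorbed into the quoted error term. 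Relabeling the four resulting expressions by the obvious permutations of shift parameters---so that the $\widetilde{I}_j$ arising from the Voronoi label $(-\delta,-\gamma,-\beta,-\alpha)$ becomes $M^{(2)}_{\alpha,\beta,\gamma,\delta}$, and cyclically for the other three---produces exactly the stated form.

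The main obstacle is purely notational: keeping track of which of the four Voronoi labels corresponds to which $M^{(2)}_{\star}$, so that the relabeling produces precisely the arithmetical factor $C_{-\delta,-\gamma,-\beta,-\alpha,h,k}(s)$ and the prefactors $h^{-\delta}k^{-\beta}(hk)^s$ demanded in Proposition \ref{prop:deltaconclusion2}. A direct comparison with the computation that proved Proposition \ref{prop:M} makes this routine. In particular, Theorem \ref{thm:CFE} is not required at this stage; it enters only in the subsequent task of combining $M^{(1)}$ with $M^{(2)}$.
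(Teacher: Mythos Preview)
Your proposal is correct and follows exactly the paper's own route: the paper simply remarks that ``an analog of Proposition \ref{prop:M} holds for $I_O^{(2)}$, where each main term has the same form as that for $I_O^{(1)}$ but with $\alpha$ switched with $-\gamma$ and $\beta$ switched with $-\delta$, and multiplied by $X_{\alpha,\beta,\gamma,\delta,t}\sim (t/2\pi)^{-\alpha-\beta-\gamma-\delta}$,'' and that $M_{\alpha,\beta,\gamma,\delta}^{(2)}$ is the one obtained from $M_{\beta,\alpha,\delta,\gamma}^{(1)}$ under this modification. Your writeup fills in precisely these details, including the correct identification of the Voronoi label $(-\delta,-\gamma,-\beta,-\alpha)$ with $M_{\alpha,\beta,\gamma,\delta}^{(2)}$ and the observation that Theorem~\ref{thm:CFE} is not needed here.
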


The result of adding $M_{\alpha,\beta,\gamma,\delta}^{(1)}$ and $M_{\alpha,\beta,\gamma,\delta}^{(2)}$ is given by the following
\begin{myprop}
We have
\begin{equation}
M_{\alpha,\beta,\gamma,\delta}^{(1)} + M_{\alpha,\beta,\gamma,\delta}^{(2)} = P^{(0)}_{\alpha,\beta,\gamma,\delta} + P^{(1)}_{\alpha,\beta,\gamma,\delta} + P^{(2)}_{\alpha,\beta,\gamma,\delta},
\end{equation}
where
\begin{equation}
\label{eq:plasticbag}
P^{(0)}_{\alpha,\beta,\gamma,\delta} = \frac{1}{\sqrt{hk}} \int_{-\infty}^{\infty} w(t) \left(\frac{t}{2 \pi}\right)^{-\alpha - \gamma} A_{-\gamma,\beta,-\alpha,\delta}(0) 
h^{\alpha} k^{\gamma} C_{\alpha,\beta,\gamma,\delta,h,k}(0)
dt,
\end{equation}
\begin{multline}
P^{(1)}_{\alpha,\beta,\gamma,\delta}   = - \half \frac{\zeta(1- \alpha + \beta) \zeta(1 - \gamma + \delta) \zeta(1 -\alpha + \beta -\gamma + \delta )}{\zeta(2-\alpha + \beta - \gamma + \delta)} \frac{1}{\sqrt{hk}} 
\\
\int_{-\infty}^{\infty} w(t) \left(\frac{t}{2 \pi}\right)^{-\alpha - \gamma}
\frac{G(\frac{-\alpha - \gamma}{2})}{\frac{-\alpha - \gamma}{2}}  
 (h/k)^{\frac{\alpha - \gamma}{2}} C_{\alpha,\beta,\gamma,\delta,h,k}\left(\frac{-\alpha - \gamma}{2}\right)
 dt,
\end{multline}
and
\begin{multline}
P^{(2)}_{\alpha,\beta,\gamma,\delta}   = \half \frac{\zeta(1- \alpha + \beta) \zeta(1 - \gamma + \delta) \zeta(1 -\alpha + \beta -\gamma + \delta )}{\zeta(2-\alpha + \beta - \gamma + \delta)} \frac{1}{\sqrt{hk}} 
\\
\int_{-\infty}^{\infty} w(t) \left(\frac{t}{2 \pi}\right)^{-\alpha - \gamma}
\frac{G(\frac{-\beta - \delta}{2})}{\frac{-\beta - \delta}{2}}  
h^{\alpha} k^{\gamma} (hk)^{\frac{-\beta - \delta}{2}}  C_{\alpha,\beta,\gamma,\delta,h,k}\left(\frac{-\beta - \delta}{2}\right) dt.
\end{multline}
\end{myprop}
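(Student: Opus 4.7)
The plan is to use the functional equation for $C_{\alpha,\beta,\gamma,\delta,h,k}(s)$ from Corollary \ref{coro:CFE} to make the integrands of $M^{(1)}_{\alpha,\beta,\gamma,\delta}$ and $M^{(2)}_{\alpha,\beta,\gamma,\delta}$ coincide after a change of variable, and then to extract the terms $P^{(i)}_{\alpha,\beta,\gamma,\delta}$ as the residues that emerge when the two integrals are combined by Cauchy's theorem.

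Concretely, in the $s$-integral defining $M^{(2)}_{\alpha,\beta,\gamma,\delta}$ I substitute $s \mapsto -s$; this converts the contour $(\varepsilon)$ into $(-\varepsilon)$ traversed upward, and since $G(-s)/(-s) = -G(s)/s$ it introduces an overall minus sign. The resulting integrand contains the factor $h^{-\delta}k^{-\beta}(hk)^{-s}C_{-\delta,-\gamma,-\beta,-\alpha,h,k}(-s)$, which by Corollary \ref{coro:CFE} (applied with $s$ replaced by $-s$) equals $h^{\alpha}k^{\gamma}(hk)^{s}C_{\alpha,\beta,\gamma,\delta,h,k}(s)$. Setting
\begin{equation*}
F(s) := \frac{G(s)}{s}\,\zeta(1-\alpha-\gamma-2s)\,\zeta(1+\beta+\delta+2s)\,h^{\alpha}k^{\gamma}(hk)^{s}C_{\alpha,\beta,\gamma,\delta,h,k}(s),
\end{equation*}
and denoting by
\begin{equation*}
\Pi_{\alpha,\beta,\gamma,\delta} := \frac{\zeta(1-\alpha+\beta)\zeta(1-\gamma+\delta)}{\zeta(2-\alpha+\beta-\gamma+\delta)}\cdot\frac{1}{\sqrt{hk}}\int_{-\infty}^{\infty} w(t)\left(\frac{t}{2\pi}\right)^{-\alpha-\gamma}dt
\end{equation*}
the prefactor common to $M^{(1)}$ and $M^{(2)}$, this identification yields
\begin{equation*}
M^{(1)}_{\alpha,\beta,\gamma,\delta} + M^{(2)}_{\alpha,\beta,\gamma,\delta} = \Pi_{\alpha,\beta,\gamma,\delta}\cdot\frac{1}{2\pi i}\left(\int_{(\varepsilon)} - \int_{(-\varepsilon)}\right) F(s)\,ds.
\end{equation*}

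By Cauchy's theorem (the rapid decay of $G$ kills the horizontal segments at infinity), the bracketed contour difference equals the sum of residues of $F$ inside the strip $-\varepsilon < \operatorname{Re}(s) < \varepsilon$. For $\alpha,\beta,\gamma,\delta$ sufficiently small these are three simple poles: $s=0$ from $1/s$ (with $G(0)=1$), $s=-(\alpha+\gamma)/2$ from $\zeta(1-\alpha-\gamma-2s)$ (residue $-\tfrac12$), and $s=-(\beta+\delta)/2$ from $\zeta(1+\beta+\delta+2s)$ (residue $+\tfrac12$). Multiplying the residue at $s=0$ by $\Pi_{\alpha,\beta,\gamma,\delta}$ and recognizing
\begin{equation*}
\frac{\zeta(1-\alpha+\beta)\zeta(1-\gamma+\delta)\zeta(1-\alpha-\gamma)\zeta(1+\beta+\delta)}{\zeta(2-\alpha+\beta-\gamma+\delta)} = A_{-\gamma,\beta,-\alpha,\delta}(0)
\end{equation*}
recovers $P^{(0)}_{\alpha,\beta,\gamma,\delta}$. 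The residue at $s=-(\alpha+\gamma)/2$, combined with the algebraic simplification $h^{\alpha}k^{\gamma}(hk)^{-(\alpha+\gamma)/2}=(h/k)^{(\alpha-\gamma)/2}$ and the remaining zeta evaluation $\zeta(1-\alpha+\beta-\gamma+\delta)$ of the second zeta factor, produces $P^{(1)}_{\alpha,\beta,\gamma,\delta}$. The residue at $s=-(\beta+\delta)/2$ is analogous and produces $P^{(2)}_{\alpha,\beta,\gamma,\delta}$, with the factor $h^{\alpha}k^{\gamma}(hk)^{-(\beta+\delta)/2}$ left intact.

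The crux is the coincidence of the two integrands forced by Corollary \ref{coro:CFE}; without this symmetry the three main terms could not be read off as the residues of a single contour integral, and one would be stuck simplifying two a priori independent expressions. Once the reduction is in hand, the remainder is routine bookkeeping, with some care required for the sign conventions (from the substitution $s\mapsto -s$ and from the $\pm\tfrac12$ residues of $\zeta$ at $s=1$) and for the algebraic tidying of the powers of $h$ and $k$.
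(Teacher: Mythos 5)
Your proposal is correct and is essentially the paper's own argument: the same three residues at $s=0$, $2s=-\alpha-\gamma$, $2s=-\beta-\delta$ are extracted, and the same functional equation (Corollary \ref{coro:CFE}) together with the substitution $s\to -s$ and the evenness of $G$ is what matches the two integrands. The only cosmetic difference is that you transform $M^{(2)}$ first and present the sum as a difference of two contours of a single integrand, whereas the paper shifts the contour in $M^{(1)}$ to $(-\varepsilon)$ and then cancels the shifted integral against $M^{(2)}$; the content is identical.
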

\begin{proof}
We begin by developing $M^{(1)}$ by moving its line of integration to $-\varepsilon$, passing poles at $s=0$, $2s = -\alpha - \gamma$, and $2s = -\beta - \delta$.  The pole at $s=0$ gives
\begin{multline}
\frac{\zeta(1- \alpha + \beta) \zeta(1 - \gamma + \delta) \zeta(1-\alpha  -\gamma) \zeta(1 + \beta + \delta)}{\zeta(2-\alpha + \beta - \gamma + \delta)} 
\\
\frac{1}{\sqrt{hk}} 
\int_{-\infty}^{\infty} w(t) \left(\frac{t}{2 \pi}\right)^{-\alpha - \gamma}
h^{\alpha} k^{\gamma} C_{\alpha,\beta,\gamma,\delta,h,k}(0)
dt,
\end{multline}
which is precisely \eqref{eq:plasticbag} (recall $A_{\alpha,\beta,\gamma,\delta}(s)$ was given by \eqref{eq:Adef}).  The pole at $2s= -\alpha - \gamma$ gives $P^{(1)}$ and the pole at $2s = -\beta - \delta$ gives $P^{(2)}$.  

On the new line apply the change of variable $s \rightarrow -s$.  The functional equation for $C(s)$ as stated in Corollary \ref{coro:CFE} shows that the new integral cancels $M^{(2)}_{\alpha,\beta,\gamma,\delta}$.
\end{proof}

Our next goal is to show
\begin{myprop}
\label{prop:P}
We have
\begin{equation}
\label{eq:P0}
P^{(0)}_{\alpha,\beta,\gamma,\delta} =  \frac{1}{\sqrt{hk}} \int_{-\infty}^{\infty} \left(\frac{t}{2 \pi}\right)^{-\alpha - \gamma} w(t) Z_{-\gamma,\beta,-\alpha,\delta,h,k}(0) dt,
\end{equation}
and
\begin{equation}
\label{eq:Pi}
P^{(i)}_{\alpha,\beta,\gamma,\delta} = - J^{(i)}_{\alpha,\beta,\gamma,\delta},
\end{equation}
for $i=1,2$.
\end{myprop}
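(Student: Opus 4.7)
The plan is to establish the three equalities one at a time. All three ultimately reduce to prime-by-prime algebraic identities comparing the arithmetic factor $C$ of Corollary \ref{coro:arithmeticfactor} with the Euler product $B$ from \eqref{eq:B} at special values of $s$.

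For \eqref{eq:P0}, the prefactor $(t/2\pi)^{-\alpha-\gamma}$ and the $t$-integral already match on the two sides, and the four zetas of $A_{-\gamma,\beta,-\alpha,\delta}(0)$ (expanded via \eqref{eq:Adef}) coincide with the combination of the zeta prefactor $\zeta(1-\alpha+\beta)\zeta(1-\gamma+\delta)/\zeta(2-\alpha+\beta-\gamma+\delta)$ in the definition of $M^{(1)}_{\alpha,\beta,\gamma,\delta}$ together with the $\zeta(1-\alpha-\gamma)\zeta(1+\beta+\delta)$ produced when one evaluates the $s$-integral at the residue $s=0$. After cancelling these common factors, the claim becomes the purely local identity
\[
h^{\alpha} k^{\gamma}\, C_{\alpha,\beta,\gamma,\delta,h,k}(0) \;=\; B_{-\gamma,\beta,-\alpha,\delta,h,k}(0),
\]
which splits as an $h$-product times a $k$-product. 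At each prime $p^{h_p}\|h$ one plugs in the closed-form expressions for $C^{(0)},C^{(1)},C^{(2)}$ at $s=0$ from Corollary \ref{coro:arithmeticfactor}, and matches them against the ratio $\sum_j \sigma_{-\gamma,\beta}(p^j)\sigma_{-\alpha,\delta}(p^{j+h_p})p^{-j} \big/ \sum_j \sigma_{-\gamma,\beta}(p^j)\sigma_{-\alpha,\delta}(p^{j})p^{-j}$ appearing in $B$; both sides are rational functions in $p^{-\alpha},p^{-\beta},p^{-\gamma},p^{-\delta}$, and the verification is a direct algebraic check.

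For $P^{(1)}=-J^{(1)}$, the $t$-integrals and the factor $G(\tfrac{-\alpha-\gamma}{2})/(\tfrac{-\alpha-\gamma}{2})$ are common to both sides, so the task reduces to computing $\mathrm{Res}_{s=-(\alpha+\gamma)/2}Z_{\alpha,\beta,\gamma,\delta,h,k}(2s)$. Writing $Z=AB$, the pole comes only from $\zeta(1+2s+\alpha+\gamma)$ inside $A(2s)$, with residue $\tfrac12$ in $s$; the remaining zetas in $A(2s)$ evaluated at $2s=-\alpha-\gamma$ produce precisely $\zeta(1-\alpha+\beta)\zeta(1-\gamma+\delta)\zeta(1-\alpha+\beta-\gamma+\delta)/\zeta(2-\alpha+\beta-\gamma+\delta)$, matching the prefactor of $P^{(1)}$. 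After accounting for the powers $(h/k)^{(\alpha-\gamma)/2}$ and $(hk)^{(\alpha+\gamma)/2-1/2}$, the identity collapses to the local claim
\[
C_{\alpha,\beta,\gamma,\delta,h,k}\!\left(\tfrac{-\alpha-\gamma}{2}\right) \;=\; h^{\gamma} k^{\alpha}\, B_{\alpha,\beta,\gamma,\delta,h,k}(-\alpha-\gamma),
\]
a prime-by-prime check of the same flavour as in the $P^{(0)}$ case. For $P^{(2)}=-J^{(2)}$ the argument is parallel, now using the pole of $Z_{\alpha,\beta,\gamma,\delta,h,k}(2s)$ at $2s=-(\beta+\delta)$; here $J^{(2)}$ is defined as a residue of $Z_{-\gamma,-\delta,-\alpha,-\beta,h,k}(2s)$, and the functional equation of Corollary \ref{coro:CFE} provides the translation between the two descriptions, reducing once again to a local identity of the same type.

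The main obstacle is the prime-by-prime verification linking $C$ (at the special values $0$, $-\tfrac{\alpha+\gamma}{2}$, and $-\tfrac{\beta+\delta}{2}$) with $B$ (with appropriately shifted parameters). This is a finite but delicate algebraic computation in the generators $p^{-\alpha},p^{-\beta},p^{-\gamma},p^{-\delta}$ and $p^{-1}$, and I would organize the bookkeeping along the same lines as the proof of Theorem \ref{thm:CFE}, treating each exponent $h_p,k_p$ as a parameter and expanding the relevant geometric series in closed form before simplifying.
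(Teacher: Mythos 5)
Your proposal is correct and follows essentially the same route as the paper: the paper proves the proposition by reducing each of the three equalities to the local Euler-product identities of Lemmas \ref{lemma:CB0}, \ref{lemma:CB1} and \ref{lemma:CB2} (your displayed identities $h^{\alpha}k^{\gamma}C_{\alpha,\beta,\gamma,\delta,h,k}(0)=B_{-\gamma,\beta,-\alpha,\delta,h,k}(0)$ and $C_{\alpha,\beta,\gamma,\delta,h,k}(\tfrac{-\alpha-\gamma}{2})=h^{\gamma}k^{\alpha}B_{\alpha,\beta,\gamma,\delta,h,k}(-\alpha-\gamma)$ are exactly the content of those lemmas), with the $P^{(2)}$ case handled, as you suggest, by first applying the functional equation of Theorem \ref{thm:CFE} and then the same local identity. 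The prime-by-prime verification you outline, via the closed-form local factors of $B$ and $C$, is precisely how the paper carries out these checks.
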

This proposition will complete the proof of Proposition \ref{prop:offdiagonal}, by summing over the four permutations of the shift parameters.  We shall prove Proposition \ref{prop:P} at the end of Section \ref{section:Zdev}.  We need to exhibit a relation between $C_{\alpha,\beta,\gamma,\delta,h,k}(s)$ and $Z_{\alpha,\beta,\gamma,\delta,h,k}(s)$.

\subsection{Development of $Z(s)$}
\label{section:Zdev}
We require a different formulation of $Z(s)$ in order to recognize relations with $C(s)$.  Let
\begin{equation}
B_{\alpha,\beta,\gamma,\delta,h}(s) = \prod_{p^{h_p} || h} \left(\frac{\sum_{j=0}^{\infty} \sigma_{\alpha,\beta}(p^j) \sigma_{\gamma,\delta}(p^{j + h_p}) p^{-j(s+1)}}{\sum_{j=0}^{\infty} \sigma_{\alpha,\beta}(p^j) \sigma_{\gamma,\delta}(p^{j}) p^{-j(s+1)}} 
\right),
\end{equation}
so that
\begin{equation}
B_{\alpha,\beta,\gamma,\delta,h,k}(s) = B_{\alpha,\beta,\gamma,\delta,h}(s) B_{\gamma,\delta,\alpha,\beta,k}(s).
\end{equation}
Recall $B_{\alpha,\beta,\gamma,\delta,h,k}(s)$ was defined by \eqref{eq:B}. 

\begin{mylemma}
We have
\begin{equation}
B_{\alpha,\beta,\gamma,\delta,h}(s) = \prod_{p^{h_p} || h} \left(\frac{B^{(0)}(s) - p^{-1} B^{(1)}(s)  + p^{-2}B^{(2)}(s) }{(p^{-\gamma} - p^{-\delta})(1  - p^{-2-\alpha - \beta - \gamma - \delta}p^{-2s})} \right),
\end{equation}
where
\begin{align}
B^{(0)}(s)  =  B^{(0)}_{\alpha,\beta,\gamma,\delta,h}(s) & = p^{-\gamma(1+h_p)} - p^{-\delta(1 + h_p)}, \\
B^{(1)}(s)  =  B^{(1)}_{\alpha,\beta,\gamma,\delta,h}(s) & = (p^{-\alpha} + p^{-\beta})p^{-\gamma - \delta} (p^{-\gamma h_p} - p^{-\delta h_p})p^{-s}, \\
B^{(2)}(s)  =  B^{(2)}_{\alpha,\beta,\gamma,\delta,h,2}(s) & =  p^{-\alpha - \beta - \gamma - \delta}(p^{-\delta - \gamma h_p} - p^{-\gamma - \delta h_p})p^{-2s}.
\end{align}
\end{mylemma}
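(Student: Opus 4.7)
The lemma is a purely local statement, so fix a prime $p \mid h$ and abbreviate $h_p = h$, $A = p^{-\alpha}$, $B = p^{-\beta}$, $C = p^{-\gamma}$, $D = p^{-\delta}$, and $u = p^{-s-1}$ (so $p^{-s} = pu$). Both sides are rational functions of $A,B,C,D,u$, so it suffices by analytic continuation to treat the generic case $\alpha \neq \beta$, $\gamma \neq \delta$, in which
\[
\sigma_{\alpha,\beta}(p^j) \;=\; \frac{A^{j+1}-B^{j+1}}{A-B}, \qquad \sigma_{\gamma,\delta}(p^{j+h}) \;=\; \frac{C^{j+h+1}-D^{j+h+1}}{C-D}.
\]

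First I would handle the denominator. Substituting the formulas above, summing four geometric series, and combining over the common denominator $\Pi := (1-ACu)(1-ADu)(1-BCu)(1-BDu)$, one obtains the standard Ramanujan-type identity
\[
\sum_{j=0}^\infty \sigma_{\alpha,\beta}(p^j)\sigma_{\gamma,\delta}(p^j) u^j \;=\; \frac{1 - ABCD\,u^2}{\Pi},
\]
which is also visible from the Euler product in \eqref{eq:fourzetas} and the definition of $A_{\alpha,\beta,\gamma,\delta}$ in \eqref{eq:Adef}.

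Next I would compute the numerator by the same geometric-series manipulation:
\[
(A-B)(C-D)\sum_{j=0}^\infty \sigma_{\alpha,\beta}(p^j)\sigma_{\gamma,\delta}(p^{j+h}) u^j \;=\; \frac{AC^{h+1}}{1-ACu} - \frac{AD^{h+1}}{1-ADu} - \frac{BC^{h+1}}{1-BCu} + \frac{BD^{h+1}}{1-BDu}.
\]
Combining over $\Pi$ and forming the ratio with the denominator sum, the factor $\Pi$ cancels, leaving
\[
\frac{\text{numerator sum}}{\text{denominator sum}} \;=\; \frac{\mathcal{N}(u)}{(A-B)(C-D)(1 - ABCD\,u^2)},
\]
where $\mathcal{N}(u) = AC^{h+1}P_1 - AD^{h+1}P_2 - BC^{h+1}P_3 + BD^{h+1}P_4$ and each $P_i$ is the product of three of the four factors making up $\Pi$.

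The main computational task is to expand $\mathcal{N}(u)$ as a polynomial in $u$ and to collect coefficients. I expect the following: the coefficient of $u^0$ equals $(A-B)(C^{h+1}-D^{h+1})$; the coefficient of $u^1$ equals $-(A^2-B^2)CD(C^h-D^h)$; the coefficient of $u^2$ equals $AB(A-B)(C^{h+1}D^2 - C^2D^{h+1}) = ABCD(A-B)(DC^h - CD^h)$; and the coefficient of $u^3$ vanishes identically (this is the delicate cancellation, analogous to the one responsible for the $1-ABCDu^2$ numerator of the $h=0$ identity). Dividing out $(A-B)$ and restoring $p^{-s} = pu$ then yields
\[
\mathcal{N}(u)/(A-B) \;=\; (C^{h+1} - D^{h+1}) - (A+B)CD(C^h-D^h)u + ABCD(DC^h - CD^h)u^2,
\]
which upon unwinding the notation is exactly $B^{(0)}(s) - p^{-1}B^{(1)}(s) + p^{-2}B^{(2)}(s)$, while $(C-D)(1 - ABCD\,u^2) = (p^{-\gamma}-p^{-\delta})(1 - p^{-2-\alpha-\beta-\gamma-\delta-2s})$, as claimed.

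The only real obstacle is careful bookkeeping: one must expand four cubic products $P_i$ and track the six-plus cancellations in each of the coefficient calculations. Nothing deeper than elementary algebra is required, but the vanishing of the $u^3$ term and the clean factorization of the $u^2$ coefficient as $ABCD(DC^h - CD^h)$ rather than the symmetric-looking $AB(C^{h+1}D^2 - C^2 D^{h+1})$ are the two points that justify the particularly pleasant form of $B^{(i)}$ stated in the lemma.
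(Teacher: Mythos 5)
Your proposal is correct and follows essentially the same route as the paper: write $\sigma_{\alpha,\beta}(p^m)$ in closed form, sum the four resulting geometric series, and place the ratio over the common denominator $(1-p^{-1-s-\alpha-\gamma})(1-p^{-1-s-\beta-\gamma})(1-p^{-1-s-\alpha-\delta})(1-p^{-1-s-\beta-\delta})$, identifying the numerator with $B^{(0)}(s)-p^{-1}B^{(1)}(s)+p^{-2}B^{(2)}(s)$ and the denominator sum with the local factor of \eqref{eq:fourzetas}. The only difference is cosmetic: the paper first pairs the $\alpha$- and $\beta$-terms so that the factor $p^{-\alpha}-p^{-\beta}$ cancels before expanding, which sidesteps your $u^3$-vanishing check and the final division by $A-B$, but the underlying computation is the same.
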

\begin{proof}
A preliminary step is to compute
\begin{equation}
\sum_{j=0}^{\infty} \sigma_{\alpha,\beta}(p^j) \sigma_{\gamma,\delta}(p^{j + h_p}) p^{-j(s+1)}.
\end{equation}
Recall
\begin{align}
\sigma_{\alpha,\beta}(p^{m}) = \sum_{n_1 n_2 = p^m} n_1^{-\alpha} n_2^{-\beta} = \sum_{0 \leq j \leq m} p^{-j\alpha} p^{-(m-j)\beta} = p^{-m\beta} \sum_{0 \leq j \leq m} p^{(-\alpha + \beta)j} 
\\
= p^{-m\beta} \frac{1 - p^{(-\alpha + \beta)(m+1)}}{1 - p^{-\alpha + \beta}}
= \frac{p^{-(m+1)\alpha} - p^{-(m+1)\beta}}{p^{-\alpha} - p^{-\beta}}.
\end{align}
Thus
\begin{align}
\sum_{j=0}^{\infty} \sigma_{\alpha,\beta}(p^j) \sigma_{\gamma,\delta}(p^{j + h_p}) p^{-j(s+1)} &=
\\ 
\sum_{j=0}^{\infty} & \frac{(p^{-(j+1)\alpha} - p^{-(j+1)\beta} )
(p^{-(j + h_p+1)\gamma} - p^{-(j + h_p+1)\delta})}{(p^{-\alpha} - p^{-\beta})(p^{-\gamma} - p^{-\delta})}p^{-j(s+1)},
\end{align}
which upon multiplying out is the sum of four terms.  One of them is
\begin{align}
\sum_{j=0}^{\infty} \frac{(p^{-(j+1)\alpha} )
(p^{-(j + h_p+1)\gamma} )}{(p^{-\alpha} - p^{-\beta})(p^{-\gamma} - p^{-\delta})}p^{-j(s+1)} = \frac{p^{-\alpha - \gamma(1 + h_p)}}{(p^{-\alpha} - p^{-\beta})(p^{-\gamma} - p^{-\delta})} \sum_{j=0}^{\infty} \frac{1}{p^{j(1+s + \alpha + \gamma)}}
\\
= \frac{p^{-\alpha - \gamma(1 + h_p)} }{(p^{-\alpha} - p^{-\beta})(p^{-\gamma} - p^{-\delta})(1 - p^{-1-s - \alpha - \gamma})}.
\end{align}
The other three terms are gotten by switching $\alpha$ and $\beta$ or $\gamma$ and $\delta$, and so appropriately summing them gives
\begin{multline}
\sum_{j=0}^{\infty} \sigma_{\alpha,\beta}(p^j) \sigma_{\gamma,\delta}(p^{j + h_p}) p^{-j(s+1)} = (p^{-\alpha} - p^{-\beta})^{-1} (p^{-\gamma} - p^{-\delta})^{-1}
\\
\times \left( \frac{p^{-\alpha - \gamma(1 + h_p)}}{1 - p^{-1-s - \alpha - \gamma}} 
- \frac{p^{-\beta - \gamma(1 + h_p)}}{1 - p^{-1-s - \beta - \gamma}}
- \frac{p^{-\alpha - \delta(1 + h_p)}}{1 - p^{-1-s - \alpha - \delta}}
+ \frac{p^{-\beta - \delta(1 + h_p)}}{1 - p^{-1-s - \beta - \delta}}
\right),
\end{multline}
which simplifies to
\begin{equation}
\frac{1}{p^{-\gamma} - p^{-\delta}} 
\left( \frac{p^{- \gamma(1 + h_p)}}{(1 - p^{-1-s - \alpha - \gamma})(1 - p^{-1-s - \beta - \gamma})} 
- \frac{p^{-\delta(1 + h_p)}}{(1 - p^{-1-s - \alpha - \delta})(1 - p^{-1-s - \beta - \delta})}
\right),
\end{equation}
and expands further into
\begin{multline}
\frac{B^{(0)}(s) - p^{-1} B^{(1)}(s) + p^{-2} B^{(2)}(s)}{(p^{-\gamma} - p^{-\delta})(1 - p^{-1-s - \alpha - \gamma})(1 - p^{-1-s - \beta - \gamma})(1 - p^{-1-s - \alpha - \delta})(1 - p^{-1-s - \beta - \delta})}.
\end{multline}

We conclude that
\begin{equation}
\frac{\sum_{j=0}^{\infty} \sigma_{\alpha,\beta}(p^j) \sigma_{\gamma,\delta}(p^{j + h_p}) p^{-j(s+1)}}{\sum_{j=0}^{\infty} \sigma_{\alpha,\beta}(p^j) \sigma_{\gamma,\delta}(p^{j}) p^{-j(s+1)}}
= 
\frac{B^{(0)}(s) - p^{-1} B^{(1)}(s) + p^{-2} B^{(2)}(s)}
{(p^{-\gamma} - p^{-\delta})(1  - p^{-\alpha - \beta - \gamma - \delta}p^{-2s})},
\end{equation}
which completes the proof.
\end{proof}

We have
\begin{mylemma}
\label{lemma:CB0}
We have
\begin{equation}
h^{\alpha} k^{\gamma} C_{\alpha,\beta,\gamma,\delta,h,k}(0) = B_{-\gamma,\beta,-\alpha,\delta,h,k}(0).
\end{equation}
\end{mylemma}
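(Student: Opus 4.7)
The plan is to verify the identity prime by prime by exploiting the Euler product structure of both sides. Since
\[
C_{\alpha,\beta,\gamma,\delta,h,k}(s) = C_{\alpha,\beta,\gamma,\delta,h}(s)\, C_{\gamma,\delta,\alpha,\beta,k}(s)
\qquad\text{and}\qquad
B_{-\gamma,\beta,-\alpha,\delta,h,k}(s) = B_{-\gamma,\beta,-\alpha,\delta,h}(s)\, B_{-\alpha,\delta,-\gamma,\beta,k}(s),
\]
and $h^{\alpha} k^{\gamma}$ factors as $\prod_{p|h} p^{\alpha h_p} \cdot \prod_{p|k} p^{\gamma k_p}$, the claim splits into the two ``single-sided'' statements
\[
p^{\alpha h_p}\cdot \bigl[\text{local factor of } C_{\alpha,\beta,\gamma,\delta,h}(0) \text{ at } p\bigr] = \bigl[\text{local factor of } B_{-\gamma,\beta,-\alpha,\delta,h}(0) \text{ at } p\bigr],
\]
together with the same statement for primes $p\mid k$ (with $(\alpha,\beta)\leftrightarrow(\gamma,\delta)$ and $h\to k$). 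The $k$-case is obtained from the $h$-case by the symmetry $(\alpha,\beta,\gamma,\delta,h)\to(\gamma,\delta,\alpha,\beta,k)$, so it suffices to handle a single prime $p\mid h$ with $h_p$ its exponent in $h$.

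At such a prime, I would substitute $s=0$ in the explicit formulas from Corollary~\ref{coro:arithmeticfactor} and from the preceding lemma computing $B_{\alpha,\beta,\gamma,\delta,h}$. Introduce the shorthand $x = p^{-\alpha-\delta}$. For $C$, the local factor at $p$ is
\[
(1 - p^{-2+\alpha-\beta+\gamma-\delta})^{-1} \cdot \frac{C^{(0)}(0) - p^{-1}C^{(1)}(0) + p^{-2}C^{(2)}(0)}{1 - x},
\]
with $C^{(0)}(0) = 1 - x^{1+h_p}$, $C^{(1)}(0) = (p^{\gamma-\delta}+p^{\alpha-\beta}x)(1-x^{h_p})$, and $C^{(2)}(0) = p^{\alpha-\beta+\gamma-\delta}(x - x^{h_p})$. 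For $B_{-\gamma,\beta,-\alpha,\delta,h}(0)$ at the same prime, the substitution $(a,b,c,d)=(-\gamma,\beta,-\alpha,\delta)$ into the lemma gives the denominator $(p^{\alpha}-p^{-\delta})(1-p^{-2+\gamma-\beta+\alpha-\delta})$, which matches the $C$-denominator up to the factor $p^{\alpha}(1-x)$; so multiplication by $p^{\alpha h_p}$ will combine with a single $p^{\alpha}$ from this match to account for a total $p^{\alpha(1+h_p)}$ discrepancy that needs to be pushed into the numerator.

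The bulk of the verification is then a polynomial identity: after clearing denominators, one must show
\[
p^{\alpha(1+h_p)}\bigl[C^{(0)}(0) - p^{-1}C^{(1)}(0) + p^{-2}C^{(2)}(0)\bigr] = B^{(0)}_{-\gamma,\beta,-\alpha,\delta,h}(0) - p^{-1}B^{(1)}_{-\gamma,\beta,-\alpha,\delta,h}(0) + p^{-2}B^{(2)}_{-\gamma,\beta,-\alpha,\delta,h}(0).
\]
Termwise this is transparent once one notices $p^{\alpha(1+h_p)}(1 - x^{1+h_p}) = p^{\alpha(1+h_p)} - p^{-\delta(1+h_p)}$, which is precisely $B^{(0)}_{-\gamma,\beta,-\alpha,\delta,h}(0)$; the $p^{-1}$ and $p^{-2}$ coefficients line up analogously, $p^{\alpha(1+h_p)}(p^{\gamma-\delta}+p^{\alpha-\beta}x)(1-x^{h_p})$ matching $(p^{\gamma}+p^{-\beta})p^{\alpha-\delta}(p^{\alpha h_p}-p^{-\delta h_p})$, and the third coefficient checked similarly.

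The only real obstacle is bookkeeping: keeping track of the exponents under the parameter substitution $(\alpha,\beta,\gamma,\delta)\mapsto(-\gamma,\beta,-\alpha,\delta)$ in the $B$-formula while simultaneously tracking the $p^{\alpha h_p}$ prefactor and the prefactor $(1-p^{-2+\alpha-\beta+\gamma-\delta})^{-1}$ present in $C$ but absorbed into the denominator of $B$. Once the substitutions are performed cleanly, no genuine algebraic miracle is required and the identity drops out by termwise comparison. I would execute this by introducing the variables $x=p^{-\alpha-\delta}$, $u = p^{\gamma-\delta}$, $v = p^{\alpha-\beta}$ to suppress notational clutter, reducing the check to a short polynomial identity in $x,u,v,h_p$.
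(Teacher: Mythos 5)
Your proposal is correct and follows essentially the same route as the paper: reduce by symmetry to a single prime $p \mid h$, match the Euler-factor denominators (the factor $1-p^{-2+\alpha-\beta+\gamma-\delta}$ cancels and a single $p^{\alpha}$ is left over), and verify termwise the three identities $p^{\alpha(1+h_p)}C^{(i)}_{\alpha,\beta,\gamma,\delta,h}(0)=B^{(i)}_{-\gamma,\beta,-\alpha,\delta,h}(0)$, which are exactly the paper's identities $p^{\alpha h_p}C^{(i)}(0)=p^{-\alpha}B^{(i)}(0)$ for $i=0,1,2$. The bookkeeping you sketch checks out, so no further comment is needed.
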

\begin{proof}
By symmetry, it suffices to show
\begin{equation}
h^{\alpha} C_{\alpha,\beta,\gamma,\delta,h}(0) = B_{-\gamma,\beta,-\alpha,\delta,h}(0).
\end{equation}

Inspection of the Euler products for $B$ and $C$ reduces the problem to showing
\begin{equation}
\frac{p^{\alpha h_p} C_{\alpha,\beta,\gamma,\delta,h}^{(i)}(0)}{(1 - p^{-\alpha - \delta})} = \frac{B_{-\gamma,\beta,-\alpha,\delta,h}^{(i)}(0)}{p^{\alpha} - p^{-\delta}},
\end{equation}
which reduces to showing
\begin{equation}
p^{\alpha h_p} C_{\alpha,\beta,\gamma,\delta,h}^{(i)}(0) = p^{-\alpha} B_{-\gamma,\beta,-\alpha,\delta,h}^{(i)}(0)
\end{equation}
for $i=0,1,2$.  These identities are
\begin{align}
p^{\alpha h_p} (1- p^{(-\alpha - \delta)(1 + h_p)}) &= p^{-\alpha} (p^{\alpha(1 +h_p)} - p^{-\delta(1 + h_p)}), \\
p^{\alpha h_p} (p^{\gamma - \delta} + p^{\alpha - \beta} p^{-\alpha - \delta})(1- p^{(-\alpha - \delta)h_p}) &= p^{-\alpha} (p^{\gamma} + p^{-\beta}) p^{\alpha - \delta} (p^{\alpha h_p} - p^{-\delta h_p}), \\
p^{\alpha h_p} p^{\alpha - \beta + \gamma - \delta} (p^{-\alpha - \delta} - p^{(-\alpha- \delta)h_p}) & = p^{-\alpha} p^{\gamma - \beta + \alpha - \delta} (p^{-\delta + \alpha h_p} - p^{\alpha - \delta h_p}),
\end{align}
each of which is easily checked by inspection.
\end{proof}
\begin{mylemma}
\label{lemma:CB1}
We have
\begin{multline}
\half \frac{\zeta(1-\alpha + \beta) \zeta(1-\gamma + \delta) \zeta(1-\alpha + \beta - \gamma + \delta)}{\zeta(2-\alpha + \beta -\gamma + \delta)} \left(\frac{h}{k}\right)^{\frac{\alpha - \gamma}{2}} C_{\alpha,\beta,\gamma,\delta,h,k}\left(\frac{-\alpha - \gamma}{2}\right) 
\\
= \text{Res}_{s=\frac{-\alpha-\gamma}{2}} (Z_{\alpha,\beta,\gamma, \delta,h,k}(2s)) (hk)^{\frac{\alpha + \gamma}{2}}.
\end{multline}
\end{mylemma}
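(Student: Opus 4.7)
The plan is to compute the residue on the right-hand side explicitly and reduce the claim to a purely arithmetical identity between $C$ and $B$ which can then be checked prime-by-prime.

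First I would compute the residue. Since $Z_{\alpha,\beta,\gamma,\delta,h,k}(2s) = A_{\alpha,\beta,\gamma,\delta}(2s)\,B_{\alpha,\beta,\gamma,\delta,h,k}(2s)$, the only source of a pole at $s_0 = -\frac{\alpha+\gamma}{2}$ is the factor $\zeta(1 + 2s + \alpha + \gamma)$ in $A_{\alpha,\beta,\gamma,\delta}(2s)$. The chain rule inserts the factor $\tfrac12$ (from $\tfrac{d(2s)}{ds} = 2$). Evaluating the surviving zeta factors at $2s_0 = -\alpha - \gamma$ yields
\begin{equation*}
\mathrm{Res}_{s=s_0} Z_{\alpha,\beta,\gamma,\delta,h,k}(2s) = \tfrac12 \,\frac{\zeta(1-\alpha+\beta)\,\zeta(1-\gamma+\delta)\,\zeta(1-\alpha+\beta-\gamma+\delta)}{\zeta(2-\alpha+\beta-\gamma+\delta)}\,B_{\alpha,\beta,\gamma,\delta,h,k}(-\alpha-\gamma).
\end{equation*}
Matching this against the left-hand side of the lemma cancels the four-factor zeta prefactor and the $\tfrac12$, so after clearing the $(hk)^{(\alpha+\gamma)/2}$ on the right it suffices to prove the arithmetic identity
\begin{equation*}
C_{\alpha,\beta,\gamma,\delta,h,k}\bigl(-\tfrac{\alpha+\gamma}{2}\bigr) \;=\; h^{\gamma}\,k^{\alpha}\,B_{\alpha,\beta,\gamma,\delta,h,k}(-\alpha-\gamma).
\end{equation*}

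Next I would exploit the factorizations $C_{\alpha,\beta,\gamma,\delta,h,k} = C_{\alpha,\beta,\gamma,\delta,h}\,C_{\gamma,\delta,\alpha,\beta,k}$ and $B_{\alpha,\beta,\gamma,\delta,h,k} = B_{\alpha,\beta,\gamma,\delta,h}\,B_{\gamma,\delta,\alpha,\beta,k}$. Since the point $-\frac{\alpha+\gamma}{2}$ and the number $-\alpha-\gamma$ are invariant under the swap $(\alpha,\beta)\leftrightarrow(\gamma,\delta)$, the $k$-factor identity reduces to the $h$-factor identity with $h$ replaced by $k$. Thus it suffices to prove
\begin{equation*}
C_{\alpha,\beta,\gamma,\delta,h}\bigl(-\tfrac{\alpha+\gamma}{2}\bigr) \;=\; h^{\gamma}\,B_{\alpha,\beta,\gamma,\delta,h}(-\alpha-\gamma).
\end{equation*}

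For this I would work prime-by-prime using the explicit formulas in the lemmas for $C_{\alpha,\beta,\gamma,\delta,h}(s)$ and $B_{\alpha,\beta,\gamma,\delta,h}(s)$. A direct computation shows the quadratic Euler correction factor $(1-p^{-2+\alpha-\beta+\gamma-\delta})$ that sits out front in $C$ combines neatly with the denominator $(1 - p^{-2-\alpha-\beta-\gamma-\delta}p^{-2s})$ from $B$ evaluated at $s=-\alpha-\gamma$, and similarly the factor $(1-p^{-\alpha-\delta-2s})$ in the denominator of $C$, when $s = -\frac{\alpha+\gamma}{2}$, reduces to $1-p^{\gamma-\delta} = p^{\gamma}(p^{-\gamma}-p^{-\delta})$, matching the factor $(p^{-\gamma}-p^{-\delta})$ in the denominator of $B$ up to the overall $p^{\gamma h_p}$ contribution to $h^{\gamma}$. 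After these cancellations, the identity reduces to three componentwise identities
\begin{equation*}
C^{(i)}_{\alpha,\beta,\gamma,\delta,h}\bigl(-\tfrac{\alpha+\gamma}{2}\bigr) = p^{\gamma(1+h_p)}\,B^{(i)}_{\alpha,\beta,\gamma,\delta,h}(-\alpha-\gamma),\qquad i=0,1,2,
\end{equation*}
each of which is a mechanical check: substituting $-\alpha-\delta-2s = \gamma-\delta$ into the formulas for $C^{(i)}$ and $p^{-s}=p^{\alpha+\gamma}$ into those for $B^{(i)}$ produces identical expressions (for instance $C^{(0)} = 1 - p^{(\gamma-\delta)(1+h_p)}$, and $p^{\gamma(1+h_p)}B^{(0)} = p^{\gamma(1+h_p)}(p^{-\gamma(1+h_p)} - p^{-\delta(1+h_p)}) = 1 - p^{(\gamma-\delta)(1+h_p)}$).

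The only subtle point is bookkeeping: correctly accounting for the factor $\tfrac12$ from the change of variables in the residue, ensuring that the $(1-p^{-2+\alpha-\beta+\gamma-\delta})$ correction in $C$ is matched with the denominator of $B$ after substitution, and distributing the factor $h^{\gamma}k^{\alpha}$ symmetrically between the two Euler products. Everything else is direct algebra using the closed forms established in the preceding lemmas.
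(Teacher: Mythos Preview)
Your proposal is correct and follows essentially the same route as the paper: compute the residue of $Z_{\alpha,\beta,\gamma,\delta,h,k}(2s)$ at $s=-\tfrac{\alpha+\gamma}{2}$ to strip off the zeta prefactor and the $\tfrac12$, reduce to the arithmetic identity $h^{-\gamma}C_{\alpha,\beta,\gamma,\delta,h}(-\tfrac{\alpha+\gamma}{2}) = B_{\alpha,\beta,\gamma,\delta,h}(-\alpha-\gamma)$ (with the $k$-part following by the $(\alpha,\beta)\leftrightarrow(\gamma,\delta)$ symmetry), and then verify this prime-by-prime by matching the denominators and checking the three componentwise identities $p^{-\gamma h_p}C^{(i)}(-\tfrac{\alpha+\gamma}{2}) = p^{\gamma}B^{(i)}(-\alpha-\gamma)$. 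Your bookkeeping of the factor $p^{\gamma(1+h_p)}$ as coming partly from $h^{\gamma}=\prod_p p^{\gamma h_p}$ and partly from the denominator match $1-p^{\gamma-\delta}=p^{\gamma}(p^{-\gamma}-p^{-\delta})$ is exactly what the paper does, only phrased slightly differently.
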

\begin{proof}
First, note that
\begin{align}
\text{Res}_{s=\frac{-\alpha-\gamma}{2}} Z_{\alpha,\beta,\gamma, \delta,h,k}(2s)  = 
\text{Res}_{s=\frac{-\alpha-\gamma}{2}}(A_{\alpha,\beta,\gamma,\delta}(2s)) B_{\alpha,\beta,\gamma,\delta,h,k}(-\alpha-\gamma)
\\
=\half  \frac{\zeta(1-\alpha + \beta) \zeta(1-\gamma + \delta) \zeta(1-\alpha + \beta - \gamma + \delta)}{\zeta(2-\alpha + \beta -\gamma + \delta)} B_{\alpha,\beta,\gamma,\delta,h,k}(-\alpha-\gamma).
\end{align}

Thus it suffices to check
\begin{equation}
\label{eq:CB}
h^{-\gamma} C_{\alpha,\beta,\gamma,\delta,h}\left(\frac{-\alpha - \gamma}{2}\right) = B_{\alpha,\beta,\gamma,\delta,h}(-\alpha-\gamma),
\end{equation}
since the analogous identity for the primes dividing $k$ follows by symmetry.  Again, inspection of the Euler products reduces the problem to showing
\begin{equation}
\frac{p^{-\gamma h_p} C^{(i)}_{\alpha,\beta,\gamma,\delta,h}\left(\frac{-\alpha-\gamma}{2}\right)}{(1 - p^{-2 + \alpha - \beta + \gamma - \delta})(1 - p^{-\alpha - \delta + \alpha + \gamma})} = \frac{B^{(i)}_{\alpha,\beta,\gamma,\delta}(-\alpha - \gamma)}{(p^{-\gamma} - p^{-\delta})(1 - p^{-2 -\alpha - \beta - \gamma - \delta + 2\alpha + 2\gamma})},
\end{equation}
which reduces to showing
\begin{equation}
p^{-\gamma h_p} C_{\alpha,\beta,\gamma,\delta,h}^{(i)}\left(\frac{-\alpha - \gamma}{2}\right) = p^{\gamma} B_{\alpha,\beta,\gamma,\delta,h}^{(i)}(-\alpha-\gamma),
\end{equation}
for $i=0,1,2$.  These identities
are equivalent to
\begin{align}
p^{-\gamma h_p} (1 - p^{(\gamma-\delta)(1+h_p)}) & = p^{\gamma} (p^{-\gamma(1+h_p)} - p^{-\delta (1+h_p)}), \\
p^{-\gamma h_p} (p^{\gamma-\delta} + p^{\alpha-\beta} p^{\gamma-\delta}) (1- p^{(\gamma-\delta)h_p}) & = p^{\gamma} (p^{-\alpha} + p^{-\beta}) p^{-\gamma-\delta} (p^{-\gamma h_p} - p^{-\delta h_p}) p^{\alpha + \gamma}, \\
p^{-\gamma h_p} p^{\alpha-\beta+\gamma-\delta} (p^{\gamma-\delta} - p^{(\gamma-\delta)h_p)}) &= p^{\gamma} p^{-\alpha - \beta - \gamma - \delta} (p^{-\delta - \gamma h_p} - p^{-\gamma - \delta h_p}) p^{2\alpha + 2\gamma},
\end{align}
each of which is established by inspection.
\end{proof}
\begin{mylemma}
\label{lemma:CB2}
We have
\begin{multline}
\half \frac{\zeta(1- \alpha + \beta) \zeta(1 - \gamma + \delta) \zeta(1 -\alpha + \beta -\gamma + \delta )}{\zeta(2-\alpha + \beta - \gamma + \delta)} 
h^{\alpha} k^{\gamma} C_{\alpha,\beta,\gamma,\delta,h,k}\left(\frac{-\beta - \delta}{2}\right)
\\
=\text{Res}_{s=\frac{\beta + \delta}{2}}(Z_{-\gamma,-\delta,-\alpha,-\beta,h,k}(2s)).
\end{multline}
\end{mylemma}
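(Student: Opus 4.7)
The plan is to reduce Lemma \ref{lemma:CB2} to Lemma \ref{lemma:CB1} via the functional equation for $C(s)$ from Theorem \ref{thm:CFE}, in direct parallel with the structure of the proof of Lemma \ref{lemma:CB1}.

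First, I would evaluate the residue on the right-hand side. Writing $Z = A \cdot B$ and observing that $B_{-\gamma,-\delta,-\alpha,-\beta,h,k}(2s)$ is holomorphic near $s = \tfrac{\beta+\delta}{2}$, the pole of $Z_{-\gamma,-\delta,-\alpha,-\beta,h,k}(2s)$ arises solely from the factor $\zeta(1+2s-\beta-\delta)$ inside $A_{-\gamma,-\delta,-\alpha,-\beta}(2s)$. A short residue computation gives
\begin{equation*}
\text{Res}_{s=\frac{\beta+\delta}{2}} A_{-\gamma,-\delta,-\alpha,-\beta}(2s) = \half \frac{\zeta(1-\alpha+\beta)\zeta(1-\gamma+\delta)\zeta(1-\alpha+\beta-\gamma+\delta)}{\zeta(2-\alpha+\beta-\gamma+\delta)}.
\end{equation*}
After cancelling this zeta ratio, which is common to both sides of the claim, the statement reduces to the purely arithmetic identity
\begin{equation*}
h^{\alpha} k^{\gamma} C_{\alpha,\beta,\gamma,\delta,h,k}\bigl(\tfrac{-\beta-\delta}{2}\bigr) = B_{-\gamma,-\delta,-\alpha,-\beta,h,k}(\beta+\delta).
\end{equation*}

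The main step is the $h$-local version of this identity, namely
\begin{equation*}
h^{\alpha} C_{\alpha,\beta,\gamma,\delta,h}\bigl(\tfrac{-\beta-\delta}{2}\bigr) = B_{-\gamma,-\delta,-\alpha,-\beta,h}(\beta+\delta),
\end{equation*}
since the $k$-local analogue follows by applying the same argument with $(\alpha,\beta,\gamma,\delta) \mapsto (\gamma,\delta,\alpha,\beta)$ and $h \mapsto k$ (a substitution that fixes $\tfrac{\beta+\delta}{2}$), using the factorizations $C_{\alpha,\beta,\gamma,\delta,h,k}(s) = C_{\alpha,\beta,\gamma,\delta,h}(s) C_{\gamma,\delta,\alpha,\beta,k}(s)$ and the analogous one for $B$. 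To establish the $h$-identity, I would invoke Theorem \ref{thm:CFE} at $s = \tfrac{\beta+\delta}{2}$, which transforms the left-hand side into
\begin{equation*}
h^{\alpha} C_{\alpha,\beta,\gamma,\delta,h}\bigl(\tfrac{-\beta-\delta}{2}\bigr) = h^{\beta} C_{-\delta,-\gamma,-\beta,-\alpha,h}\bigl(\tfrac{\beta+\delta}{2}\bigr),
\end{equation*}
and then apply the local identity \eqref{eq:CB} proved inside Lemma \ref{lemma:CB1} with the parameter substitution $(\alpha,\beta,\gamma,\delta) \mapsto (-\delta,-\gamma,-\beta,-\alpha)$. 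This yields $B_{-\delta,-\gamma,-\beta,-\alpha,h}(\beta+\delta)$, which coincides with the desired $B_{-\gamma,-\delta,-\alpha,-\beta,h}(\beta+\delta)$ by the manifest symmetries $B_{a,b,c,d,h}(s) = B_{b,a,c,d,h}(s) = B_{a,b,d,c,h}(s)$ inherited from $\sigma_{\alpha,\beta}(n) = \sigma_{\beta,\alpha}(n)$.

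Multiplying the $h$- and $k$-local identities and combining with the residue computation from the first paragraph yields the claim. There is no substantive obstacle beyond bookkeeping the permutations of the shift parameters; once Theorem \ref{thm:CFE} is invoked to perform the reflection $s \mapsto -s$ and Lemma \ref{lemma:CB1} is invoked to evaluate $C$ at the reflected point, the identity essentially writes itself, and the symmetries of $B$ in its first two and last two arguments align the indices cleanly.
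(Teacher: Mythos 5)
Your proposal is correct and follows essentially the same route as the paper: compute the residue of $Z_{-\gamma,-\delta,-\alpha,-\beta,h,k}(2s)$ via its $A\cdot B$ factorization, reduce to the local identity $h^{\alpha} C_{\alpha,\beta,\gamma,\delta,h}\bigl(\tfrac{-\beta-\delta}{2}\bigr) = B_{-\gamma,-\delta,-\alpha,-\beta,h}(\beta+\delta)$, and obtain it by combining Theorem \ref{thm:CFE} with the identity \eqref{eq:CB} from Lemma \ref{lemma:CB1} and the symmetry of $B$ in its parameter pairs.
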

\begin{proof}
We begin by noting that
\begin{multline}
\text{Res}_{s=\frac{\beta + \delta}{2}}(Z_{-\gamma,-\delta,-\alpha,-\beta,h,k}(2s)) 
\\
= \half \frac{\zeta(1- \alpha + \beta) \zeta(1 - \gamma + \delta) \zeta(1 -\alpha + \beta -\gamma + \delta )}{\zeta(2-\alpha + \beta - \gamma + \delta)} B_{-\gamma,-\delta,-\alpha,-\beta,h,k}(\beta + \delta),
\end{multline}
so it suffices to show
\begin{equation}
h^{\alpha} k^{\gamma} C_{\alpha,\beta,\gamma,\delta,h,k}\left(\frac{-\beta - \delta}{2}\right) = B_{-\gamma,-\delta,-\alpha,-\beta,h,k}(\beta + \delta),
\end{equation}
which reduces to showing
\begin{equation}
h^{\alpha}  C_{\alpha,\beta,\gamma,\delta,h}\left(\frac{-\beta - \delta}{2}\right) = B_{-\gamma,-\delta,-\alpha,-\beta,h}(\beta + \delta),
\end{equation}
by symmetry.  Applying Theorem \ref{thm:CFE} and then \eqref{eq:CB} gives
\begin{align}
h^{\alpha}  C_{\alpha,\beta,\gamma,\delta,h}\left(\frac{-\beta - \delta}{2}\right) = h^{\beta} C_{-\delta,-\gamma, -\beta,-\alpha,h}\left(\frac{\beta + \delta}{2}\right)
\\
= B_{-\delta,-\gamma, -\beta,-\alpha,h}\left(\beta + \delta\right),
\end{align}
which equals $B_{-\gamma,-\delta,-\alpha,-\beta,h}(\beta + \delta)$ by inspection of \eqref{eq:B}.
\end{proof}

\begin{proof}[Proof of Proposition \ref{prop:P}]
Using Lemma \ref{lemma:CB0} gives \eqref{eq:P0}, and Lemmas \ref{lemma:CB1} and \ref{lemma:CB2} give \eqref{eq:Pi} for $i=1$ and $2$, respectively.
\end{proof}

\section{Twisted moment conjectures}
The recent five author paper \cite{CFKRS05} has produced a recipe that can conjecture the asymptotics for the integral moments of a family of $L$-functions.  Their recipe does not directly apply to twisted moments of the form considered in this paper, so it is perhaps not clear how to guess what the answer should be.  In fact, we predict that a rather simple modification of their recipe can be used to conjecture the form of the asymptotics.  For simplicity, we consider the twisted $2\ell$-th moment of the Riemann zeta function, that is
\begin{multline}
I_{2\ell}(h,k) \\ := \int_{-\infty}^{\infty} 
\left(\frac{h}{k}\right)^{-it}
\zeta({\textstyle \half + \alpha_1 + it}) \dots
\zeta({\textstyle \half + \alpha_\ell + it}) 
\zeta({\textstyle \half + \beta_1 - it}) \dots 
\zeta({\textstyle \half + \beta_\ell - it}) w(t)dt,
\end{multline}
where $w$ is a nice function with support in $[T/2, 4T]$ and $(h,k) = 1$.  Following the derivation of the moment conjecture of Section 2.2 of \cite{CFKRS05} (that is, the case $h=k=1$), for each occurence of $\zeta$ we write
\begin{equation}
\zeta(\tfrac12 + s) = \sum n^{-\half - s} + \chi(\thalf + s) \sum n^{-\half +s}
\end{equation}
and multiply out the various sums, obtaining $2^{2 \ell}$ terms.  Now we throw away the terms where the product of $\chi$-factors is oscillatory, which amounts to keeping the terms with an equal number of $\chi(\thalf + \alpha_i + it)$ and $\chi(\thalf + \beta_j - it)$ terms.  Note that by Stirling's approximation,
\begin{equation}
\chi(\thalf + \alpha + it) \chi(\thalf + \beta - it) \sim \left(\frac{t}{2\pi}\right)^{-\alpha-\beta}.
\end{equation}
Now consider the term where we take the first part of the approximate functional equation for each occurence of $\zeta$, namely
\begin{equation}
\int_{-\infty}^{\infty} 
\left(\frac{h}{k}\right)^{-it} \sum_{\substack{m_1,\dots, m_{\ell} \\ n_1, \dots, n_{\ell}}} \frac{1}{m_1^{\half + \alpha_1} \dots m_\ell^{\half + \alpha_\ell} n_1^{\half + \beta_1} \dots n_\ell^{\half + \beta_\ell}} \left(\frac{m_1 \dots m_\ell}{n_1 \dots n_\ell}\right)^{-it} w(t) dt.
\end{equation}
In the averaging we only retain the diagonal term $h m_1 \dots m_\ell = k n_1 \dots n_\ell$, which is
\begin{equation}
\frac{1}{\sqrt{hk}} \int_{-\infty}^{\infty} w(t) Z_{\alpha, \beta}(0)dt,
\end{equation}
where
\begin{align}
Z_{\alpha, \beta}(s) := (hk)^{\half + s} \sum_{h m_1 \dots m_\ell = k n_1 \dots n_\ell} \frac{1}{m_1^{\half + \alpha_1 + s} m_{\ell}^{\half + \alpha_\ell + s} n_1^{\half + \beta_1 + s} n_\ell^{\half + \beta_\ell + s}}
\\
= (hk)^{\half + s} \sum_{hm = kn} \frac{\sigma_{\alpha}(m) \sigma_{\beta}(n)}{m^{\half + s} n^{\half  + s}},
\end{align}
where
\begin{equation}
\sigma_{\alpha}(m) = \sigma_{\alpha_1,\dots, \alpha_k}(m) = \sum_{c_1 \dots c_k = m} c_1^{-\alpha_1} \dots c_k^{-\alpha_k}.
\end{equation}
Since $(h,k) = 1$, we get all solutions to $hm = kn$ by $m = k l$, $n = hl$, giving
\begin{equation}
Z_{\alpha, \beta}(s) = \sum_l \frac{\sigma_{\alpha}(kl) \sigma_{\beta}(hl)}{l^{1 + 2s}}.
\end{equation}
Let
\begin{equation}
A_{\alpha,\beta}(s) = \sum_l \frac{\sigma_{\alpha}(l) \sigma_{\beta}(l)}{l^{1 + 2s}},
\end{equation}
suppose $p^{h_p} || h$ and $p^{k_p} || k$, and set
\begin{multline}
B_{\alpha,\beta,h,k}(s) = \prod_{p | h} \left(\frac{\sum_{j=0}^{\infty} \sigma_{\alpha}(p^j) \sigma_{\beta}(p^{j + h_p}) p^{-j(s+1)}}{\sum_{j=0}^{\infty} \sigma_{\alpha}(p^j) \sigma_{\beta}(p^{j}) p^{-j(s+1)}} 
\right)
\\
\times \prod_{p | k} 
\left(
\frac{\sum_{j=0}^{\infty} \sigma_{\alpha}(p^{j + k_p}) \sigma_{\beta}(p^{j}) p^{-j(s+1)}}{\sum_{j=0}^{\infty} \sigma_{\alpha}(p^j) \sigma_{\beta}(p^{j}) p^{-j(s+1)} }
\right),
\end{multline}
so that
\begin{equation}
Z_{\alpha,\beta}(s) =  A_{\alpha,\beta}(s) B_{\alpha,\beta,h,k}(2s).
\end{equation}
Extracting the polar behavior of $A_{\alpha,\beta}(s)$ near $s=0$ gives
\begin{equation}
A_{\alpha,\beta}(s) = \left( \prod_{1\leq i ,j \leq \ell} \zeta(1 + s + \alpha_i + \beta_j) \right) \widetilde{A}_{\alpha,\beta}(s),
\end{equation}
where $\widetilde{A}_{\alpha,\beta}(s)$ is given by an Euler product that is absolutely convergent in some half plane $\text{Re}(s) > -\delta$ for some $\delta > 0$ (depending on $\ell$).  Thus we obtain the meromorphic continuation of $Z_{\alpha,\beta}(s)$ to $s=0$ via
\begin{equation}
Z_{\alpha,\beta}(0) = \left( \prod_{1\leq i ,j \leq \ell} \zeta(1 + \alpha_i + \beta_j) \right) \widetilde{A}_{\alpha,\beta}(0) B_{\alpha,\beta,h,k}(0).
\end{equation}

We are left with a term generalizing the $Z_{\alpha,\beta,\gamma,\delta,h,k}(0)$ term appearing in Theorem \ref{thm:mainresult} (in case $\ell=2$ it is precisely the same).  The final conjecture is obtained by summing over the $\binom{2 \ell}{\ell}$ permutations gotten by swapping an equal number of $\alpha_i$'s and $-\beta_j$'s, and for each such swap, multiplying by $(t/2\pi)^{-\alpha_i - \beta_j}$.  This procedure is an obvious generalization of the way to write the main term of our Theorem \ref{thm:mainresult}.  Explicitly, we write
\begin{myconj}
Let $\Phi_j$ be the set of subsets of $\{\alpha_1, \dots, \alpha_\ell\}$ of cardinality $j$, for $j=0,\dots, \ell$, and similarly let $\Psi_j$ be the set of subsets of $\{\beta_1, \dots, \beta_\ell\}$ of cardinality $j$.  If $S \in \Phi_j$ and $T \in \Psi_j$ then write $S= \{\alpha_{i_1}, \dots, \alpha_{i_j}\}$ and $T=\{\beta_{l_1}, \dots, \beta_{l_j}\}$ where $i_1 < i_2 < \dots < i_j$ and $l_1 < l_2 < \dots < l_j$.  Let $(\alpha_S;\beta_T)$ be the tuple obtained from $(\alpha_1, \dots, \alpha_\ell;\beta_1, \dots, \beta_\ell)$ by replacing $\alpha_{i_r}$ with $-\beta_{i_r}$ and replacing $\beta_{i_r}$ with $-\alpha_{i_r}$ for $1 \leq r \leq j$.
We then conjecture that
\begin{equation}
I_{2\ell}(h,k) = \int_{-\infty}^{\infty} w(t) \left(\sum_{0 \leq j \leq \ell} \sum_{\substack{S \in \Phi_j \\ T \in \Psi_j}} Z_{\alpha_S;\beta_T}(0) \left(\frac{t}{2\pi}\right)^{-S-T}  + O(t^{-\half + \varepsilon})\right)dt,
\end{equation}
provided $hk \leq T^{\half - \varepsilon}$,
where we have written $(t/2 \pi)^{-S - T}$ for $(t/2\pi)^{-\sum_{x \in S} x - \sum_{y \in T} y}$.
\end{myconj}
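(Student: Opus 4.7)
The plan is to derive the conjecture by an obvious extension of the CFKRS recipe, following the sketch in the paragraphs preceding the statement. The derivation proceeds in three substantive steps. First, apply the approximate functional equation to each of the $2\ell$ zeta factors in $I_{2\ell}(h,k)$, writing $\zeta(\half + s) = \sum n^{-\half - s} + \chi(\half + s) \sum n^{-\half + s}$. Multiplying out produces $2^{2\ell}$ terms indexed by a choice of ``first piece'' or ``swap piece'' for each $\zeta$-factor. Retain only those configurations in which the number of swapped $\alpha$-factors equals the number of swapped $\beta$-factors, so that the product of $\chi$-factors is non-oscillatory and, by Stirling, asymptotically equals $(t/2\pi)^{-\sum_{i\in I}\alpha_i - \sum_{l\in L}\beta_l}$ for appropriate index sets $I, L$ of common cardinality $j$.

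Second, for each retained configuration, absorb the $\chi$-factors by using the functional equation, effectively implementing the substitutions $\alpha_i \leftrightarrow -\beta_l$ for $i \in I$, $l \in L$; the surviving configurations are parametrized by pairs $(S,T) \in \Phi_j \times \Psi_j$, and each becomes a ``first-piece'' sum relative to the modified tuple $(\alpha_S; \beta_T)$. Now impose the diagonal condition $h m_1 \cdots m_\ell = k n_1 \cdots n_\ell$; using $(h,k) = 1$, parametrize by $m_1 \cdots m_\ell = kl$, $n_1 \cdots n_\ell = hl$ and sum in $l$ to obtain $(hk)^{-\half} Z_{\alpha_S, \beta_T}(0)$. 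Meromorphic continuation of $Z_{\alpha,\beta}(s)$ to $s=0$ is supplied by the factorization $Z_{\alpha,\beta}(s) = A_{\alpha,\beta}(s) B_{\alpha,\beta,h,k}(2s)$ together with the explicit extraction of the polar factor $\prod_{i,j} \zeta(1 + s + \alpha_i + \beta_j)$ from $A$, via an Euler product generalizing those of Sections \ref{section:diagonal} and \ref{section:mainterms}. Summing over $0 \leq j \leq \ell$ and $(S,T) \in \Phi_j \times \Psi_j$ assembles the conjectured main term.

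The hard part is upgrading this heuristic to a theorem. The recipe discards both oscillatory $\chi$-products and all off-diagonal contributions without justification. For $\ell = 2$, Theorem \ref{thm:mainresult} shows that the discarded off-diagonal terms reconstruct exactly the six ``swapped'' diagonal main terms of the conjecture --- a seeming miracle traced to the functional equation of Theorem \ref{thm:CFE} for $C_{\alpha,\beta,\gamma,\delta,h}(s)$. For $\ell \geq 3$, the analogous off-diagonal problem requires asymptotic evaluation of shifted convolutions $\sum_{hm - kn = r} \sigma_{\alpha_1,\dots,\alpha_\ell}(m) \sigma_{\beta_1,\dots,\beta_\ell}(n) f(hm,kn)$ of higher divisor functions, for which no sufficiently strong asymptotic is known even without the twist; this is the decisive obstacle to a proof, and the claimed error term $O(t^{-\half + \varepsilon})$ in the conjecture is essentially an optimistic extrapolation from the cases $\ell \leq 2$.
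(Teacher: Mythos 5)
Your derivation is essentially identical to the paper's own heuristic: the statement is a conjecture, and the paper ``proves'' it only via the modified CFKRS recipe (split each $\zeta$ by its functional equation, keep the non-oscillatory $\chi$-products, keep the diagonal $hm_1\cdots m_\ell = kn_1\cdots n_\ell$ using $(h,k)=1$, and continue $Z_{\alpha,\beta}(s)=A_{\alpha,\beta}(s)B_{\alpha,\beta,h,k}(2s)$ to $s=0$), which is exactly what you do, and your closing remarks on why the $\ell\geq 3$ case resists proof are accurate commentary rather than a gap.
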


Although we have only described the modified recipe for the zeta function in $t$-aspect, we also predict that an analogous modification of the recipe of \cite{CFKRS05} can be used to obtain conjectures for a general twisted moment for any family of $L$-functions (here twisting should be construed to mean multiplying by an appropriate harmonic).


\begin{thebibliography}{CFKRS}

\bibitem[A]{Atk41}
F.V. Atkinson, \emph{The mean value of the zeta-function on the
critical line},
Proc. London Math. Soc. (2) \textbf{47} (1941), 174--200.

\bibitem[BCH-B]{BCHB85}
R.~Balasubramanian, J.B. Conrey, and D.R. Heath-Brown,
\emph{Asymptotic mean square of the product of the {R}iemann zeta-function and a {D}irichlet polynomial}, 
J. Reine Angew. Math. \textbf{357} (1985), 161--181.

\bibitem[C1]{Con96}
J.B. Conrey, 
\emph{A note on the fourth power moment of the
{R}iemann zeta function}, 
Analytic Number Theory: Proceedings of a conference in honor of
Heine Halberstam, Birkhauser, 1996, Progress in Mathematics \textbf{138},
pp.~225--230.

\bibitem[C2]{Con89}
J.B. Conrey, 
\emph{More than two-fiths of the zeros of the
{R}iemann zeta-function are on the critical line},
J. Reine Angew. Math. \textbf{399} (1989), 1--26.

\bibitem[CFKRS]{CFKRS05}
J.B. Conrey, D.W. Farmer, J.P. Keating, M.O. Rubinstein, and N.C.
Snaith,
\emph{Integral moments of {$L$}-functions}, 
Proc. Lond. Math. Soc. \textbf{91} (2005), 33--104.
  
\bibitem[DI]{DI} 
J.-M. Deshouillers, and H. Iwaniec
\emph{Power mean values of the Riemann zeta function},
Mathematika \textbf{29} (1982), 202--212. 

\bibitem[DFI]{DFI} W. Duke, J. B. Friedlander, and H. Iwaniec, \emph{A quadratic divisor problem}.  
Invent. Math.  115  (1994),  no. 2, 209--217.
  
\bibitem[GJ]{Jose}  
J.A. Gaggero Jara, 
\textit{Asymptotic Mean Square of the Product of the Second Power of the Riemann Zeta Function and a Dirichlet Polynomial}, 
Ph.D. Thesis, University of Rochester, Rochester, NY, 1997

\bibitem[GR]{GR}
I. S. Gradshteyn and I. M. Ryzhik, \emph{Table of Integrals, Series, and Products}. 
Academic Press, New York, 1965.

\bibitem[HL]{HardyLittle18}
G.H. Hardy and J.E. Littlewood, \emph{Contributions to the theory
of the {R}iemann zeta-function and the theory of the distribution of primes}, 
Acta Arith. \textbf{41} (1918), 119--196.

\bibitem[H-B]{HB79}
D.R. Heath-Brown, 
\emph{The fourth power moment of the {R}iemann zeta function}, 
Proc. Lond. Math. Soc. (3) \textbf{38} (1979), 385--422.

\bibitem[I]{Ingham26}
A.E. Ingham, 
\emph{Mean-values theorems in the theory of the {R}iemann zeta-function}, 
Proc. Lond. Math. Soc. \textbf{27} (1926), 273--300.
  
\bibitem[KS]{KS} 
J.P. Keating, and N.C. Snaith,
\emph{Random matrix theory and $\zeta(1/2+it)$},
Comm. Math. Phys. \textbf{214} (2000), 57--89.

\bibitem[Lev]{Lev} 
N. Levinson,
\emph{More than one third of zeros of Riemann's zeta-function are on $\sigma =1/2$},
Advances in Math. \textbf{13} (1974), 383--436.

\bibitem[M1]{MotoHecke}
Y. Motohashi,
\emph{The Riemann Zeta-Function and Hecke Congruence Subgroups. II}, arXiv:0709.2590v1.

\bibitem[M2]{Moto}
Y. Motohashi,
\emph{Spectral theory of the Riemann zeta-function},
Cambridge Tracts in Mathematics, 127. Cambridge University Press, Cambridge, 1997.

\bibitem[T1]{Tit28_a}
E.C. Titchmarsh, 
\emph{The mean-value of the zeta-function on the
critical line}, 
Proc. London Math. Soc. (2) \textbf{27} (1928), 137--150.

\bibitem[T2]{T} E. C. Titchmarsh, \emph{The Theory of the Riemann Zeta-function}.
Second edition. Edited and with a preface by D. R. Heath-Brown. The Clarendon Press, Oxford University Press, New York, 1986.

\bibitem[W]{Watt}
N. Watt, 
\emph{Kloosterman sums and a mean value for Dirichlet polynomials},
J. Number Theory \textbf{53} (1995), 179--210. 

\end{thebibliography}
\end{document}